\documentclass[a4paper,10pt]{article}
\usepackage[top=2.54cm,bottom=2.0cm,left=2.0cm,right=2.54cm, includeheadfoot]{geometry}

\usepackage[T1]{fontenc}
\usepackage[utf8]{inputenc}
\usepackage[english]{babel}
\usepackage{enumerate}
\usepackage{multirow,booktabs}
\usepackage[table]{xcolor}
\usepackage{fullpage}
\usepackage{lastpage}
\usepackage{indentfirst}

\usepackage{footnote}

\usepackage{amsmath,amsfonts,amssymb,amscd,amsthm}

\usepackage{bm}

\usepackage[all,2cell]{xy} \UseAllTwocells \SilentMatrices

\usepackage{mathpartir}

\usepackage{hyperref}

\usepackage{subfiles}

\usepackage{multicol}

\newtheorem{theorem}{Theorem}[section]

\newtheorem{proposition}[theorem]{Proposition} 

\newtheorem{definition}[theorem]{Definition} 
\newtheorem{example}[theorem]{Example}

\newtheorem*{claim*}{Claim}

\newtheorem{remark}[theorem]{Remark}

\begin{document}

\title{Hyper Swap Structures: The Case Study of LFIs and Hyper Boolean Algebras}

\author{
 Marcelo E. Coniglio $^\textup{\scriptsize a,b}$
   \and
   Kaique Matias de Andrade Roberto $^\textup{\scriptsize a,b}$ \and Ana Claudia Golzio
 $^\textup{\scriptsize d}$
}

\date{
   $^\textup{\scriptsize a}$\textit{\small Centre for Logic, Epistemology and the History of Science (CLE), State University of Campinas (UNICAMP), Campinas, Brazil}
   \\
   $^\textup{\scriptsize b}$\textit{\small Institute of Philosophy and the Humanities (IFCH),  State University of Campinas (UNICAMP), Campinas, Brazil}
    \\
    $^\textup{\scriptsize c}$\textit{\small School of Sciences and Engineering (FCE),  São Paulo State University (UNESP), Tupã, Brazil}
}

\maketitle

\begin{abstract} 

In a previous paper, we introduced the notion of hyper swap structures, a novel class of hyperalgebras that naturally generalizes swap structures semantics. In this paper we introduce the concept of hyper Boolean algebras based on Morgado hyperlattices, proving some basic properties. From this, we show that several paraconsistent logics in the hierarchy of  Logics of Formal Inconsistency (LFIs) can be naturally characterized in terms of hyper swap structures semantics generated by hyper Boolean algebras. Finally,  for each of these LFIs we obtain a Kalman-style functor which establishes an equivalence between the category of hyper Boolean algebras and a category of hyper algebras for the corresponding LFI having the hyper swap structures as representative objects.\\

\textbf{Keywords:} Swap structures, Logics of Formal Inconsistency, Hyperalgebras, Kalman functors.

\textbf{AMS Classification:} 03B53, 03G10, 03G25, 06E75
\end{abstract}

\section{Introduction}

The genesis of the hyper algebraic structures analyzed in this paper lies in the seminal work of J.~A. Kalman~\cite{kalman:1958}. By considering a bounded distributive lattice $\mathcal{L}=\langle L,\land,\lor,0,1\rangle$, Kalman constructed a centered Kleene algebra $K(\mathcal{L})$ defined over the subset of disjoint pairs $\{(a,b) \in L^2 \ : \ a \land b =0\}$. The algebraic operations are defined as follows:
\[
(a,b) \,\bar{\land}\, (c,d) = (a \land c, b \vee d), \quad
(a,b) \,\bar{\lor}\, (c,d) = (a \lor c, b \land d), \quad
\bar{\neg} (a,b) = (b, a),
\]
and the unique element in the center of $K(\mathcal L)$ is $(0,0)$.

The categorical analysis of this construction was later established by R. Cignoli~\cite{cign:86}. He introduced the \emph{Kalman functor} $K$, between the category of bounded lattices and the category of centered Kleene algebras. For any lattice homomorphism $f:\mathcal{L} \to \mathcal{L}'$, the morphism is extended as $K(f)(a,b)=(f(a),f(b))$. Cignoli proved that $K$ has a left adjoint and, when restricted to algebras satisfying specific interpolation properties (see~\cite{jansana:sanmart:2018}), induces an equivalence of categories. This framework unifies similar constructions for Nelson and Heyting algebras independently proposed by M. Fidel~\cite{fidel:1978} and D. Vakarelov~\cite{vakarelov:1977}. Following M. Kracht~\cite{kracht:1998}, these constructions are collectively referred to as \emph{twist structures}. The associated \emph{twist structure semantics} typically designates the set of truth values as $D = \{(a,b) \ : \ a=1\}$. This methodology has since been adapted to a wide class of logical systems and algebraic contexts (e.g., \cite{odin:08, Rivieccio2014, casti:cel:sanmart:2017, busa:gala:mar:2022}).

While Abstract Algebraic Logic (AAL) provides a powerful toolkit for characterizing logical systems (see~\cite{font:16}), its traditional methods face limitations with systems lacking standard algebraic counterparts, such as the Logics of Formal Inconsistency (LFIs)~\cite{carnielli2016paraconsistent}. Many such systems cannot be characterized by a single finite logical matrix. This limitation necessitates the use of non-deterministic semantics, such as non-truth-functional bivaluations and, crucially, non-deterministic matrices (Nmatrices). Formalized by A. Avron and I. Lev~\cite{Av:01}, Nmatrices generalize standard matrices by employing \emph{hyperalgebras}, where connectives are interpreted as hyperoperators mapping inputs to non-empty sets of values. This approach was anticipated by Yu. Ivlev in the context of non-normal modal logics (see the survey in~\cite{Ivlev:2024}).

A systematic analytical method for defining Nmatrices is provided by \emph{swap structures}, introduced in~\cite[Chapter~6]{carnielli2016paraconsistent}. These can be viewed as non-deterministic variants of twist structures. Their elements are $(n+1)$-tuples over an ordered algebra $\mathcal{A}$ (typically Boolean), where the coordinates encode the truth values of formulas $\alpha_i(\varphi)$ involving non-truth-functional connectives. Swap structures have been successfully applied to Ivlev-like modal logics~\cite{coniglio2019swap} and various LFIs~\cite{coniglio2020non}. In these contexts, a generalized Kalman functor $K$ can be defined from Boolean algebras to swap structures. While this functor preserves products and monomorphisms, the existence of a left adjoint appears structurally unattainable in the non-deterministic setting.

In a recent study~\cite{con:gol:rob:2025}, we extended this approach to characterize da Costa's paraconsistent logic $C_\omega$. Unlike LFIs, $C_\omega$ is not finitely trivializable and extends positive intuitionistic logic ($\mathsf{IPL^+}$). Consequently, our characterization utilized swap structures defined over implicative lattices—the algebraic counterpart of $\mathsf{IPL^+}$. These structures expand upon the hyperlattices of Morgado~\cite{morgado1962introduccao} and Sette~\cite{sette1971algebras}, forming a class of hyperalgebras HC$_\omega$A. We established a Kalman-style functor from the category of implicative lattices to HC$_\omega$A. The logic $C_\omega$ is sound and complete with respect to both the swap structures semantics and the Nmatrix semantics over HC$_\omega$A. Consistent with previous results for LFIs, constructing a left adjoint for this functor remains an elusive goal.

The absence of an adjoint for the Kalman functor in the hyperalgebraic setting (via swap structures), in contrast to the classical algebraic case (via twist structures), can be attributed to a categorical mismatch. In the classical construction, the Kalman functor operates between categories of \emph{algebras}. Consequently, the adjoint functor merely ``forgets'' the additional operators (typically negation) while preserving the underlying algebraic structure. In turn, within the swap structure framework, the functor maps an \emph{algebra} (e.g., an implicative lattice for $C_\omega$, or a Boolean algebra for LFIs) to a \emph{hyperalgebra}. Any hypothetical adjoint attempting to reverse this process would need to map a hyperalgebra back to an algebra. This reduction is problematic because the reduct of a swap structure—stripping away the new operators—remains a hyperalgebra, not an algebra. Forcing this transition inevitably results in a loss of structural information. Therefore, a more natural approach is to define swap structures starting from hyperalgebras that share the same logical behavior as the original algebras. By doing so, the Kalman-style functor connects two categories of hyperalgebras, mirroring the symmetry found in standard twist structure constructions.

Consequently, the primary objective of this paper is to generalize the methodology of \emph{hyper swap structures and enriched hyperalgebras}, originally proposed in~\cite{con:gol:rob:2026}, to the logic mbC and other LFIs that extend it. We aim to establish categorical equivalences between a ``base'' category of hyperalgebras (interpreting standard operators like conjunction, disjunction, and implication) and an ``induced'' category of hyperalgebras (accommodating non-truth-functional operators, such as paraconsistent negation). This mirrors the success of twist constructions in the algebraic realm. As a paradigmatic case, we replace the swap structures for mbC defined over standard Boolean algebras (as seen in~\cite[Chapter~6]{car:con:16}) with hyper swap structures defined over \emph{Hyper Boolean Algebras} (HBAs). By focusing on the subcategory of \emph{enriched hyper mbC algebras} (Definition~\ref{enriched-mbc}), we demonstrate in Section~\ref{sec6} that the category of HBAs is equivalent to that of enriched hyper mbC algebras, with hyper swap structures acting as the representative objects. This generalization—operating entirely within the domain of hyperalgebras—is essential for lifting twist-based equivalences to the non-deterministic context.

The paper is organized as follows: Section~\ref{sec2} briefly reviews the fundamental concepts and results concerning hyperlattices (referencing~\cite{con:gol:rob:2025} for further details). In Section~\ref{sec3}, we introduce the concept of Hyper Boolean Algebras (HBAs) and derive their basic properties. Section~\ref{sec4} provides an overview of the paraconsistent logic mbC, adhering to the presentation in~\cite{car:con:16}. In Section~\ref{sec5}, we formally define hyper mbC algebras and hyper swap structures for mbC, developing the corresponding semantics and proving the soundness and completeness of mbC with respect to this framework. Section~\ref{sec6} defines the class of Enriched Hyper mbC Algebras (EHmbCA) and establishes the core result: the equivalence of categories between HBAs and EHmbCAs. Subsequently, Section~\ref{sec7} adapts these results to other LFIs that are axiomatic extensions of mbC. Finally, Section~\ref{sec8} summarizes our findings and outlines potential avenues for future research.


\section{Preliminaries}\label{sec2}


In this section we recall the basic notions and results on hyperlattices used throughout this paper, which were taken from our previous papers~\cite{coniglio2020non} and~\cite{con:gol:rob:2025}. 

A {\em signature} is a family $\Theta=(\Theta_n)_{n \in \mathbb N}$ of pairwise disjoint sets. Elements of $\Theta_n$ are called {\em $n$-ary connectives}. 

A {\em hyperalgebra} over a signature $\Theta$ is a set $A$ endowed with a family of functions (called {\em $n$-ary hyperoperations}) $\mathcal{O}(\#) : A^n \to \mathcal \wp(A)\setminus\{\emptyset\}$, for every $\# \in \Theta_n$ and $n \in \mathbb N$.\footnote{For simplicity, we will frequently write $\#$ instead of $\mathcal{O}(\#)$ to denote the hyperoperation interpreting the connective $\#$.}

A {\em pre-ordered set} (proset) is a pair $\mathsf P=\langle P, \preceq\rangle$ such that $P$ is a non-empty set and $\preceq$ is a reflexive and transitive relation on $P$. We say that $x$ and $y$ are {\em similar},  denoted by $x \equiv y$, if $x \preceq y$ and $y \preceq x$. For $B,C \subseteq P$, the expression $B \preceq C$ means that $x \preceq y$ for every $x \in B$ and every $y \in C$. 

\begin{definition}
Let $\mathsf P$ be a pre-ordered set, and let $B \subseteq P$.
\begin{enumerate}
    \item The set of {\em minima} of $B$ is $\mathsf{Min}(B)=\{x \in B \ : \ x \preceq B\}$, and the  set of {\em maxima} of $B$ is $\mathsf{Max}(B)=\{x \in B \ : \ B \preceq x\}$.
    \item The set of {\em upper bounds} of $B$ is $\mathsf{Ub}(B)=\{z \in P \ : B \preceq z\}$. The set of lower bounds of $B$ is $\mathsf{Lb}(B)=\{z \in P \ : z \preceq B\}$.
\end{enumerate}
\end{definition}

\begin{definition} [Morgado hyperlattices, {\cite[Ch.~II, \S 2, p.~122]{morgado1962introduccao}}] \label{def:m-hlattices}  Let $\mathsf P$ be a proset, and let $x,y \in P$.
\begin{enumerate}
    \item The {\em Morgado hypersupremum} (or {\em supremoid}) of $x$ and $y$ is the set $x \curlyvee y = \mathsf{Min}(\mathsf{Ub}(\{x,y\}))$.
    \item The {\em Morgado hyperinfimum} (or {\em infimoid}) of $x$ and $y$ is the set $x \curlywedge y = \mathsf{Max}(\mathsf{Lb}(\{x,y\}))$.
    
    \item $\mathsf P$ is said to be a {\em Morgado hyperlattice} (or an {\em m-hyperlattice}, or simply an {\em hyperlattice}) if $x \curlyvee y$ and $x \curlywedge y$ are nonempty sets for every $x,y \in P$.
\end{enumerate}
\end{definition}

\begin{definition} [Stable sets, {\cite[Definition~8]{coniglio2020non}}] \label{stable} Let  $\emptyset\neq A, B \subseteq L$. We say that $A$ and $B$ are {\em similar}, and write $A\equiv B$, if $a \equiv b$ for every $a \in A$ and $b \in B$. That is: $a \preceq b$ and $b \preceq a$  for every $a \in A$ and $b \in B$. A non-empty subset $A\subseteq L$ is {\em stable} if $A\equiv A$, i.e., $x \equiv y$ for every $x,y \in A$.
\end{definition}

\begin{proposition}[Section  2 in \cite{con:gol:rob:2025}]\label{stable-wedge-02}
    Let $A,B,C\subseteq L$ be stable sets, and let $\#,\#' \in\{\curlywedge, \curlyvee\}$. Then, $A \# B$ is stable and, for all $a\in A$, $b\in B$ and $c\in C$:
    \begin{enumerate}
        \item $A \# B= a \# b$.

        \item $A\#(B\#' C)=a\#(b\#' c)$ \ and \ $(A\# B)\#' C=(a\#b)\#' c$.

        \item $A\#(B\# C)=a\#(b\# c)=(a\#b)\# c=(A\# B)\# C$.

        \item $A \# B \preceq C$ iff $a \# b \preceq c$ for some $a \in A$, $b \in B$ and $c \in C$.

        \item $C \preceq A \# B$ iff $c \preceq a \# b$ for some $a \in A$, $b \in B$ and $c \in C$.
    \end{enumerate}
\end{proposition}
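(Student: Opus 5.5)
The whole proposition should follow from one observation: in a proset, the sets $\mathsf{Ub}$ and $\mathsf{Lb}$ depend only on the $\equiv$-classes of their arguments. So I will first establish two auxiliary facts. \emph{First}, if $x\equiv x'$ and $y\equiv y'$ then $\mathsf{Ub}(\{x,y\})=\mathsf{Ub}(\{x',y'\})$ and $\mathsf{Lb}(\{x,y\})=\mathsf{Lb}(\{x',y'\})$; this is immediate from transitivity of $\preceq$. Hence $x\curlyvee y=x'\curlyvee y'$ and $x\curlywedge y=x'\curlywedge y'$. \emph{Second}, every nonempty set of the form $\mathsf{Min}(X)$ or $\mathsf{Max}(X)$ is stable: if $u,v\in\mathsf{Min}(X)$ then $u\preceq v$ since $v\in X$, and symmetrically $v\preceq u$. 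So each $x\# y$ is stable (it is nonempty because $L$ is a hyperlattice). I read $A\# B$ as $\bigcup_{a\in A,b\in B}a\# b$.

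For item 1, if $A,B$ are stable then all pairs $(a,b)\in A\times B$ are pairwise similar coordinatewise. By the first fact every $a\# b$ is the same set, so the union $A\# B$ equals $a\# b$ for any chosen $a,b$. Stability of $A\# B$ then follows from the second fact.

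Items 2 and 3 follow by iterating item 1. Since $B\#'C$ is stable and equals $b\#'c$, we get $A\#(B\#'C)=a\# d$ for any $d\in b\#'c$, and this is exactly $a\#(b\#'c)$ by item 1 applied to the stable sets $\{a\}$ and $b\#'c$. The other bracketing is handled symmetrically. For item 3 the remaining task is associativity $a\#(b\# c)=(a\# b)\# c$ for a single operation $\#$. I expect this to be the main obstacle, since Morgado hyperoperations are not obviously associative.

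My plan for item 3 is to show that both sides equal $\mathsf{Min}(\mathsf{Ub}(\{a,b,c\}))$ in the $\curlyvee$ case; the $\curlywedge$ case is dual. Take $d\in b\curlyvee c$. Then $\mathsf{Ub}(\{a,d\})\subseteq\mathsf{Ub}(\{a,b,c\})$, because $b,c\preceq d$. Conversely, let $z\in\mathsf{Ub}(\{a,b,c\})$. Since $z$ is an upper bound of $b,c$ and $d$ is a minimal upper bound, I need $d\preceq z$. This holds because $d\in\mathsf{Min}(\mathsf{Ub}(\{b,c\}))$ means $d\preceq$ every element of $\mathsf{Ub}(\{b,c\})$, which is the paper's definition of $\mathsf{Min}$. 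Thus $\mathsf{Ub}(\{a,d\})=\mathsf{Ub}(\{a,b,c\})$, so both bracketings give the same set.

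Items 4 and 5 use stability. If $A\# B\preceq C$ then in particular $a\# b\preceq c$ for any choice of $a,b,c$. Conversely, suppose $a\# b\preceq c$ for some $a,b,c$, and take arbitrary $u\in A\# B=a\# b$ and $c'\in C$. Then $u\preceq c\preceq c'$ since $c\equiv c'$. Item 5 is dual.
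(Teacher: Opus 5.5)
Your proof is correct and complete. The paper does not prove this proposition itself; it imports it from the authors' earlier work. Your argument is the natural one and covers everything the statement needs:
\begin{itemize}
\item $\mathsf{Ub}$ and $\mathsf{Lb}$ are invariant under $\equiv$.
\item Nonempty $\mathsf{Min}$/$\mathsf{Max}$ sets are stable. You rightly rely on the paper's definition of $\mathsf{Min}$ as least elements, not merely minimal ones.
\item Associativity follows by identifying both bracketings with $\mathsf{Min}(\mathsf{Ub}(\{a,b,c\}))$, and dually with $\mathsf{Max}(\mathsf{Lb}(\{a,b,c\}))$.
\end{itemize}
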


\begin{definition}  Let $\mathsf P=\langle P, \preceq, \curlywedge, \curlyvee\rangle$ be a hyperlattice. The sets  $\mathsf{Min}(P)$ and $\mathsf{Max}(P)$ of minima and maxima elements of $P$ will be denoted by $\bot$ and $\top$, respectively.
\end{definition}

\begin{definition} Let $\mathsf P$ be a hyperlattice.\\
(1) We say that $\mathsf P$ is {\em bounded} if $\top\neq\emptyset\neq \bot$.\\
(2) We say that $\mathsf P$ is  {\em distributive} if, for all $x,y,z\in P$,
    $$x\curlywedge(y\curlyvee z)=(x\curlywedge y)\curlyvee(x\curlywedge z) \ \mbox{ and } \ x\curlyvee(y\curlywedge z)=(x\curlyvee y)\curlywedge(x\curlyvee z).$$
\end{definition}

\begin{definition} [Sette implicative hyperlattices, {\cite[Definition~2.3]{sette1971algebras}}] \label{def:s-ilattices}  A {\em Sette implicative hyperlattice} (or an {\em IHL}) is a hyperalgebra \mbox{$\mathsf L=\langle L,\curlywedge,\curlyvee,\multimap  \rangle$} such that the reduct $\langle L,\curlywedge,\curlyvee\rangle$  is a hyperlattice and the hyperoperator $\multimap$  satisfies the following properties, for every $x,y,z, z' \in L$:
\begin{description}
    \item[(I1)] $z \in x \multimap y$ implies that $x \curlywedge z \preceq y$;

    \item[(I2)] $x \curlywedge z \preceq y$ implies that   $z \preceq x \multimap y$;

    \item[(I3)] $z \equiv z'$ and $z \in x \multimap y$ implies that $z' \in x \multimap y$.
\end{description}
\end{definition}

\begin{definition}  [{\cite[Definition~12]{con:gol:rob:2025}}]  Let $\mathsf P$ be a hyperlattice, and let $x,y \in P$. The set $\mathsf R(x,y)$ is given by $\mathsf R(x,y)=\{z \in P \ : \ x \curlywedge z \preceq y\}$.
\end{definition}

\begin{proposition} [{\cite[Proposition~10]{con:gol:rob:2025}}] \label{prop:char:IHL} Let $\mathsf L=\langle L,\curlywedge,\curlyvee,\multimap  \rangle$ be a  hyperalgebra such that $\langle L,\curlywedge,\curlyvee\rangle$  is a hyperlattice. Then, $\mathsf L$ is an  IHL iff $x \multimap y = \mathsf{Max}(\mathsf R(x,y))$, for every $x,y \in L$.
\end{proposition}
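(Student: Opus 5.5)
We prove both directions by unfolding (I1)--(I3) against the definition $\mathsf{Max}(\mathsf R(x,y))=\{z\in\mathsf R(x,y) \ : \ \mathsf R(x,y)\preceq z\}$. Throughout, $x\curlywedge z\preceq y$ is read in the sense fixed in the preliminaries: every element of the set $x\curlywedge z$ lies below $y$.

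\emph{($\Rightarrow$)} Assume $\mathsf L$ is an IHL and fix $x,y$.

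For the inclusion $x\multimap y\subseteq \mathsf{Max}(\mathsf R(x,y))$, take $z\in x\multimap y$.
\begin{itemize}
\item By (I1), $x\curlywedge z\preceq y$, so $z\in\mathsf R(x,y)$.
\item For any $w\in\mathsf R(x,y)$, (I2) gives $w\preceq x\multimap y$, and in particular $w\preceq z$.
\end{itemize}
Hence $\mathsf R(x,y)\preceq z$, so $z$ is a maximum.

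For the reverse inclusion, take $z\in\mathsf{Max}(\mathsf R(x,y))$.
\begin{itemize}
\item Since $x\multimap y$ is nonempty, pick some $z_0\in x\multimap y$. By the previous paragraph, $z_0\in\mathsf R(x,y)$.
\item Since $z$ is a maximum of $\mathsf R(x,y)$, we get $z_0\preceq z$.
\item Since $z\in\mathsf R(x,y)$, (I2) gives $z\preceq x\multimap y$, so $z\preceq z_0$.
\end{itemize}
Thus $z\equiv z_0$, and (I3) yields $z\in x\multimap y$.

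\emph{($\Leftarrow$)} Assume $x\multimap y=\mathsf{Max}(\mathsf R(x,y))$ for all $x,y$. Since the hyperoperation is nonempty-valued, this set is nonempty.
\begin{itemize}
\item (I1): every maximum of $\mathsf R(x,y)$ belongs to $\mathsf R(x,y)$, which is exactly $x\curlywedge z\preceq y$.
\item (I2): if $x\curlywedge z\preceq y$, then $z\in\mathsf R(x,y)$, so $z\preceq m$ for every maximum $m$, i.e.\ $z\preceq x\multimap y$.
\item (I3): this is the one step needing more than unfolding. We must show $\mathsf R(x,y)$ is closed under $\equiv$. Suppose $z\equiv z'$ and $z\in\mathsf R(x,y)$. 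Then $\mathsf{Lb}(\{x,z\})=\mathsf{Lb}(\{x,z'\})$ by transitivity of $\preceq$, hence $x\curlywedge z=x\curlywedge z'$, and so $z'\in\mathsf R(x,y)$. Since $\mathsf R(x,y)\preceq z\preceq z'$, we get $\mathsf R(x,y)\preceq z'$, so $z'$ is also a maximum, i.e.\ $z'\in x\multimap y$.
\end{itemize}

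The main point of care is this closure of $\mathsf R(x,y)$ under similarity, via the invariance of $\curlywedge$ under replacing an argument by a similar element. Everything else is direct from the definitions.
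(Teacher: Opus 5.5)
Your proof is correct. Both inclusions in the ($\Rightarrow$) direction and the checks of (I1)--(I3) in the ($\Leftarrow$) direction go through, including the one step that needs care: $z\equiv z'$ gives $\mathsf{Lb}(\{x,z\})=\mathsf{Lb}(\{x,z'\})$, hence $x\curlywedge z=x\curlywedge z'$, so $\mathsf R(x,y)$ is closed under similarity. The paper does not reprove this statement but imports it from \cite{con:gol:rob:2025}, so there is no proof here to compare yours against; your direct unfolding of (I1)--(I3) against $\mathsf{Max}(\mathsf R(x,y))$ is the natural argument and suffices.
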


\begin{proposition}  [{\cite[Corollary~2]{con:gol:rob:2025}}] 
    Every IHL is a distributive hyperlattice.
\end{proposition}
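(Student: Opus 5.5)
The plan is to pass to the quotient by similarity, recognise it as an implicative lattice (hence distributive), and lift the distributive laws back using the stability facts of Proposition~\ref{stable-wedge-02}.

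First I will record that in any hyperlattice the sets $x\curlywedge y$ and $x\curlyvee y$ are not only stable but are full $\equiv$-classes. If $w\in x\curlywedge y$ and $w'\equiv w$, then $w'$ is a lower bound of $\{x,y\}$ and lies above every lower bound, so $w'\in x\curlywedge y$. Conversely, two maxima of $\mathsf{Lb}(\{x,y\})$ are similar. The dual argument handles $\curlyvee$. Consequently the quotient $P/{\equiv}$ with $[x]\le[y]$ iff $x\preceq y$ is a poset in which $[x]\wedge[y]$ is the class $x\curlywedge y$ and $[x]\vee[y]$ is the class $x\curlyvee y$. In particular $P/{\equiv}$ is a lattice.

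Next I use the implication. Since $x\multimap y\neq\emptyset$, pick $r\in x\multimap y$. By (I1), $[x]\wedge[r]\le[y]$. By (I2), every $[z]$ with $[x]\wedge[z]\le[y]$ satisfies $[z]\le[r]$. Hence $[r]$ is the relative pseudocomplement of $[x]$ with respect to $[y]$, and Proposition~\ref{prop:char:IHL} guarantees this is independent of the choice of $r$. So $P/{\equiv}$ is an implicative (relatively pseudocomplemented) lattice.

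I then prove distributivity of $P/{\equiv}$ directly by residuation rather than citing it. Let $u=([x]\wedge[y])\vee([x]\wedge[z])$. Since $[x]\wedge[y]\le u$, residuation gives $[y]\le[x]\to u$, and likewise $[z]\le[x]\to u$. Therefore $[y]\vee[z]\le[x]\to u$, and residuation again gives $[x]\wedge([y]\vee[z])\le u$. The reverse inequality holds in any lattice, so the first law follows. The dual law $[x]\vee([y]\wedge[z])=([x]\vee[y])\wedge([x]\vee[z])$ follows from the first by the standard lattice-theoretic argument that one distributive law implies the other.

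Finally I lift back to $P$. By Proposition~\ref{stable-wedge-02}, for stable sets $A,B$ we have $A\#B=a\#b$ for any representatives $a\in A$, $b\in B$. Hence $x\curlywedge(y\curlyvee z)=x\curlywedge w$ for any $w\in y\curlyvee z$, and this is exactly the $\equiv$-class $[x]\wedge([y]\vee[z])$. Similarly, $(x\curlywedge y)\curlyvee(x\curlywedge z)$ is the class $([x]\wedge[y])\vee([x]\wedge[z])$. Equality of the classes gives equality of the sets, because both sets are full equivalence classes. The same reasoning yields the second law.

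The main obstacle is this lifting step. One must check carefully that the hyperoperations applied to sets, understood as unions over representatives, really produce entire equivalence classes, so that equality in the quotient becomes set equality in $P$. This is precisely where the class-closure observation from the first step, together with Proposition~\ref{stable-wedge-02}, is needed.
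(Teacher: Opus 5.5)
Your proof is correct. The paper itself gives no proof of this statement; it is imported from \cite[Corollary~2]{con:gol:rob:2025}, where it appears as a corollary, i.e.\ it is derived from an earlier result rather than proved from scratch. So there is no in-paper argument to compare against. Your route (collapse similar elements, recognise $L/{\equiv}$ as a relatively pseudocomplemented lattice, use its distributivity, lift back) is the natural way to supply that derivation.

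Two small remarks. First, the fact that $[r]$ does not depend on the choice of $r\in x\multimap y$ needs no appeal to Proposition~\ref{prop:char:IHL}. Relative pseudocomplements in a poset are unique, or you can get it directly from (I1) and (I2).

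Second, the lifting step you flag as the main obstacle follows from your first observation alone. If $w\equiv w'$, then $\mathsf{Lb}(\{x,w\})=\mathsf{Lb}(\{x,w'\})$, so $x\curlywedge w=x\curlywedge w'$. Hence $x\curlywedge(y\curlyvee z)$ is a union of copies of a single full $\equiv$-class, and the same holds for $(x\curlywedge y)\curlyvee(x\curlywedge z)$. Equality of the corresponding classes in the quotient therefore gives equality of sets.

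You could also avoid the quotient by running your residuation argument directly on stable sets with Proposition~\ref{prop:IHL1}(2). That is equally short for the first law. The quotient route has the advantage that it lets you import the dual law from ordinary lattice theory instead of re-deriving it with set-valued operations.
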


The following properties will be useful in the proof of Proposition~\ref{prop:HBA-IHL}:

\begin{proposition}   [{\cite[Proposition~13]{con:gol:rob:2025}}] \label{prop:IHL1}
   Let $\mathsf L$ be an IHL. Then, for every $\emptyset \neq A,B,C \subseteq L$:\\[1mm]
   (1) $A  \preceq B$ iff  $A \multimap B = \top$.\\[1mm]
   (2) $A \curlywedge B \preceq C$ iff $A  \preceq B \multimap C$.
\end{proposition}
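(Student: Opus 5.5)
The plan is to prove both items first for single elements, $A=\{a\}$, $B=\{b\}$, $C=\{c\}$. The general case then follows by quantifying over elements. I read $A\multimap B$ as $\bigcup_{a\in A,b\in B} a\multimap b$, and similarly for $\curlywedge$. The tools are (I1)–(I3), Proposition~\ref{prop:char:IHL}, and two facts that come straight from the definitions. Fact one: $x\curlywedge y=y\curlywedge x$, since $\mathsf{Lb}(\{x,y\})$ is symmetric. Fact two: if $w\in\mathsf{Lb}(\{x,y\})$ and $u\in x\curlywedge y$, then $w\preceq u$, because $u$ is a maximum of that set.

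For (1) with elements, I show $x\preceq y$ iff $x\multimap y=\top$.
\begin{itemize}
\item Suppose $x\preceq y$ and let $z$ be arbitrary. Every $w\in x\curlywedge z$ satisfies $w\preceq x\preceq y$, so $x\curlywedge z\preceq y$. Hence $\mathsf R(x,y)=L$, and Proposition~\ref{prop:char:IHL} gives $x\multimap y=\mathsf{Max}(L)=\top$.
\item Conversely, suppose $x\multimap y=\top$. Pick $t$ in this set, which is nonempty because hyperoperations are nonempty-valued. By (I1), $x\curlywedge t\preceq y$. Since $t\in\top$, we have $x\preceq t$, so $x\in\mathsf{Lb}(\{x,t\})$. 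Every lower bound of $\{x,t\}$ lies below $x$, so $x\in x\curlywedge t$. Therefore $x\preceq y$.
\end{itemize}
For sets, $A\preceq B$ means $a\preceq b$ for all $a\in A$, $b\in B$. By the element case this means every $a\multimap b$ equals $\top$, so the union is $\top$. For the converse, suppose the union is $\top$. Then each $a\multimap b$ is contained in $\top$ and nonempty, and the same argument with one chosen $t\in a\multimap b\cap\top$ gives $a\preceq b$.

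For (2) with elements, I show $a\curlywedge b\preceq c$ iff $a\preceq b\multimap c$.
\begin{itemize}
\item Forward: $b\curlywedge a=a\curlywedge b\preceq c$, so (I2), applied with $x=b$, $z=a$, gives $a\preceq b\multimap c$.
\item Backward: take $w\in a\curlywedge b$ and $z\in b\multimap c$. By hypothesis $a\preceq z$, so $w\preceq a\preceq z$ and $w\preceq b$, i.e. $w\in\mathsf{Lb}(\{b,z\})$. By fact two, $w\preceq u$ for any $u\in b\curlywedge z$. By (I1), $u\preceq c$, hence $w\preceq c$.
\end{itemize}
The set version follows because both sides unfold to a universal statement over $a\in A$, $b\in B$, $c\in C$ and the elements of the respective hyperoperation values.

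The only delicate point is the converse direction of (1). There I must recover $x\preceq y$ from a single witness $t\in\top$, and this relies on noticing that $x$ itself belongs to the infimoid $x\curlywedge t$. A related subtlety in the set version is using nonemptiness of each $a\multimap b$, so that the union being $\top$ yields a witness for every pair $(a,b)$.
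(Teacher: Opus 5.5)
Your proof is correct. The union reading of $A\multimap B$ and $A\curlywedge B$ makes the element-wise reduction legitimate: nonemptiness of $A$, $B$ and of each $a\multimap b$ lets you pass between the union and its pieces in (1), and both sides of (2) unfold to the same triple-wise statement. The observation that $x\in x\curlywedge t$ for $t\in\top$, combined with (I1), cleanly settles the converse of (1).

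The paper gives no proof of its own, since the result is imported from \cite{con:gol:rob:2025}. Your direct verification from (I1), (I2) and Proposition~\ref{prop:char:IHL} is the natural argument, so there is nothing further to compare.
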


\section{Hyper Boolean Algebras}\label{sec3}

In this section we define Hyper Boolean Algebras and provide some basic properties. 

\begin{definition} [Hyper Boolean Algebra]\label{hba}
A {\em hyper Boolean algebra} (or a {\em HBA)}) is a hyperalgebra $\mathsf B=\langle B,\curlywedge,\curlyvee,-, \top,\bot\rangle$ such that $\mathsf B=\langle B,\curlywedge,\curlyvee, \top,\bot\rangle$ is a bounded distributive hyperlattice and for all $x,y\in B$:
\begin{description}
    \item[(HBA 1)  ] If $x\equiv y$ then $- x\equiv- y$.
    \item[(HBA 2) ] $x\in{-}{-} x$.
    \item[(HBA 3)  ] $x\curlyvee- x\equiv\top$.
    \item[(HBA 4)  ] $x\curlywedge- x\equiv\bot$.
\end{description}
\end{definition}

\begin{remark}
It is instructive to compare the axioms of Definition~\ref{hba} with those of Sette hyperalgebras for $C_{\omega}$ ($SHC_{\omega}$) analyzed in~\cite{con:gol:rob:2025} (Definition~16). While both structures validate the Law of Excluded Middle (Axiom (HBA~3) mirrors condition (H1') of $SHC_{\omega}$), the fundamental distinction lies in the Law of Non-Contradiction. HBAs require that $x \curlywedge -x \equiv \bot$ (Axiom (HBA~4)), characterizing the operator $-$ as a classical Boolean negation. In contrast, $SHC_{\omega}$ algebras, being models for a paraconsistent logic, lack this condition regarding their negation operator $\div$. Furthermore, HBAs enforce strict involution ($x \in --x$, which implies $x \equiv --x$ in this context), whereas $SHC_{\omega}$ requires only the weaker condition $\div\div x \preceq x$ (condition (H2')). Thus, HBAs constitute the hyperalgebraic counterpart of Boolean algebras, while $SHC_{\omega}$ correspond to da Costa algebras.
\end{remark}

%

\begin{proposition} \label{prop.HBA}
Let $\mathsf B$ be a HBA. Then, the following holds:
\begin{enumerate}
    \item If $y,z \in -x$ then $y \equiv z$. That is, the set $-x$ is stable.

    \item If $A \subseteq B$ is stable then $A \equiv A \curlywedge \top$ and $A \equiv A \curlyvee \bot$.

    \item If $A \subseteq B$ is stable then $-A$ is stable.

    \item If $A \subseteq B$ is stable then ${-}{-}A \equiv A$. In particular,  $x\equiv{-}{-} x$.

    \item $x \preceq y$ if and only if $x \curlywedge -y \equiv \bot$.

    \item If $x \preceq y$ if and only if  $-x \curlyvee y  \equiv \top$.

    \item If $x \preceq y$ then $-y \preceq -x$.

    \item If $A,A' \subseteq B$ are stable, then: $A \preceq A'$ if and only if $-A' \preceq -A$. In particular, $x \preceq y$ if and only if $-y \preceq -x$.

    \item $-(x\curlywedge y)\equiv- x\curlyvee- y$.

    \item $-(x\curlyvee y)\equiv- x\curlywedge- y$.
\end{enumerate}
\end{proposition}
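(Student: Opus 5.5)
The overall approach is to make everything rest on the hyperlattice facts of Proposition~\ref{stable-wedge-02} and on a \emph{uniqueness-of-complement} lemma. Everything will be proved up to similarity, since in a proset only $\equiv$ is meaningful. The preliminary facts are these. Since $\top$ and $\bot$ are stable sets of maxima and minima, we have $x\curlywedge t\equiv x$ and $x\curlyvee b\equiv x$ for any $t\in\top$ and $b\in\bot$. Indeed, $x\in\mathsf{Lb}\{x,t\}$ and every lower bound is $\preceq x$, so $\mathsf{Max}(\mathsf{Lb}\{x,t\})$ consists of elements similar to $x$. Lifting this to stable $A$ via Proposition~\ref{stable-wedge-02}(1) gives item~2. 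We also need the general fact that $x\curlywedge y$ is stable (maxima of a set are pairwise similar), and that $x\preceq y$ iff $x\in x\curlywedge y$ iff $x\curlywedge y\equiv x$.

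The key lemma is \emph{uniqueness of complements}: if $x\curlywedge u\equiv\bot$ and $x\curlyvee u\equiv\top$, and likewise for $v$, then $u\equiv v$. To prove it I will mimic the lattice argument using distributivity and item~2 read as set similarities:
\[ u\equiv u\curlywedge\top\equiv u\curlywedge(x\curlyvee v)=(u\curlywedge x)\curlyvee(u\curlywedge v)\equiv \bot\curlyvee(u\curlywedge v)\equiv u\curlywedge v .\]
This gives $u\preceq v$, and by symmetry $v\preceq u$. The delicate point is that substituting similar stable sets inside $\curlywedge,\curlyvee$ is legitimate; this is exactly Proposition~\ref{stable-wedge-02}(1)--(2) together with the observation that if $A\equiv A'$ are stable then $A\#B=A'\#B$. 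I expect this bookkeeping, i.e.\ checking at each step that the sets involved are stable so that the results can be applied, to be the main obstacle.

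With the lemma in hand, the items go as follows. For item~1, any $y,z\in -x$ are both complements of $x$ by (HBA~3,4), so $y\equiv z$. Item~3 follows from (HBA~1) and item~1. For item~4, $x\in--x$ and $--x$ is stable, so $--x\equiv x$, and this lifts to stable $A$ by (HBA~1). For item~5, the forward direction: if $x\preceq y$ then $x\curlywedge -y\preceq y\curlywedge -y\equiv\bot$, using monotonicity of $\curlywedge$ on lower bounds. Conversely,
\[ x\equiv x\curlywedge(y\curlyvee -y)=(x\curlywedge y)\curlyvee(x\curlywedge -y)\equiv (x\curlywedge y)\curlyvee\bot\equiv x\curlywedge y,\]
so $x\preceq y$. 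Item~6 is the dual argument. For item~7, from $x\preceq y$, item~5 and item~4 give $-y\curlywedge --x\equiv -y\curlywedge x\equiv\bot$, which by item~5 means $-y\preceq -x$. Item~8 then follows by applying item~7 twice together with item~4, lifted to stable sets through (HBA~1) and item~3.

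Items~9 and~10 are De Morgan laws, and I will prove them by uniqueness of complements. For item~10, I show that $-x\curlywedge -y$ is a complement of $x\curlyvee y$:
\[ (x\curlyvee y)\curlyvee(-x\curlywedge -y)=(x\curlyvee y\curlyvee -x)\curlywedge(x\curlyvee y\curlyvee -y)\equiv\top\curlywedge\top\equiv\top,\]
using distributivity and associativity on stable sets from Proposition~\ref{stable-wedge-02}(3). Dually, $(x\curlyvee y)\curlywedge(-x\curlywedge -y)\equiv\bot$. Since $-(x\curlyvee y)$ is also a complement of $x\curlyvee y$, the lemma gives item~10. Item~9 is then obtained either dually or by applying item~10 to $-x,-y$, then item~4 and item~8.
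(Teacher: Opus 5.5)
Your proposal is correct, but it is organised around a lemma the paper never uses. Items 2--6 and 8 run essentially as in the paper. Proving the forward direction of item~5 by monotonicity of $\curlywedge$ instead of the paper's associativity chain is only a cosmetic difference.

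The real divergence is in items 1, 7, 9 and 10.
\begin{itemize}
\item For item~1, the paper argues in one line: (HBA~1) applied to $x\equiv x$ gives $-x\equiv -x$.
\item For item~7, the paper uses the direct computation $-y\equiv -y\curlywedge(x\curlyvee -x)\equiv(-y\curlywedge x)\curlyvee(-y\curlywedge -x)\equiv -y\curlywedge -x$.
\item For the De Morgan laws, the paper argues order-theoretically. Antitonicity (item~7) applied to $x\curlywedge y\preceq x$ and $x\curlywedge y\preceq y$ gives $-x\curlyvee -y\preceq -(x\curlywedge y)$. For the reverse inequality it first shows $-(-x\curlyvee -y)\preceq x\curlywedge y$ via items 7 and 4, then negates once more.
\end{itemize}

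Your uniqueness-of-complements lemma shows something the paper's route does not. Every element of $-x$ is a complement of $x$, so item~1 needs only (HBA~3), (HBA~4) and distributivity. The same argument shows that (HBA~1) is in fact derivable from the other axioms. If $x\equiv x'$, then $x\curlyvee u=x'\curlyvee u$ and $x\curlywedge u=x'\curlywedge u$ for every $u$, so any element of $-x'$ is also a complement of $x$. Your item~7, obtained from item~5 applied twice plus item~4, is also shorter than the paper's computation.

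The price comes in items 9--10. Checking that $-x\curlywedge -y$ is a complement of $x\curlyvee y$ needs distributivity and associativity on stable sets, with exactly the bookkeeping you anticipated. The paper's argument needs only that supremoids and infimoids are least upper and greatest lower bounds up to similarity, together with items 4 and 7.
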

\begin{proof}
$ $
\begin{enumerate}
    \item Let  $y,z \in -x$. Since $x \equiv x$ then, by (HBA 1), $-x \equiv -x$. This implies that $y \equiv z$.

    \item Assume that $A$  is stable. Let $x \in A$ and $y \in \top$. Then, $x \preceq y$ and so $x \equiv x \curlywedge y$, where $x \curlywedge y \subseteq A \curlywedge \top$. Since $A$ and $\top$ are stable, so is $A \curlywedge \top$ and so  $A \equiv A \curlywedge \top$. The proof that $A \equiv A \curlyvee \bot$ is dual.

    \item Let $y,z \in -A$. Then, $y \in -x$ for some $x \in A$ and $z \in -w$ for some $w \in A$. Since $A$ is stable, $x \equiv w$ and so $-x\equiv -w$, by (HBA~1). From this, $y\equiv z$.

    \item Let $A$ be a stable subset of $B$. By item~(3), $-A$ is stable and then, using~(3) again, ${-}{-}A$ is stable. Let $x \in A$. By  (HBA~2), $x\in{-}{-} x \subseteq {-}{-}A$ and so $x \in {-}{-}A$. This implies that ${-}{-}A \equiv A$, since both sets are stable.

    \item Suppose that $x \preceq y$. Then $x \equiv x \curlywedge y$ and so $x \curlywedge -y \equiv (x \curlywedge y) \curlywedge -y \equiv x \curlywedge (y \curlywedge -y) \equiv x \curlywedge \bot \equiv \bot$, by (HBA~4). Conversely, suppose that $x \curlywedge -y \equiv \bot$. Then, $x \equiv x \curlywedge \top \equiv x \curlywedge (y \curlyvee -y) \equiv (x \curlywedge y) \curlyvee (x \curlywedge -y) \equiv (x \curlywedge y) \curlyvee \bot \equiv  x \curlywedge y$. From this, $x \preceq y$.

    \item The `only if' part is proven as in the previous item by a dual argument,  but now by using (HBA~3). The converse is proven as in the previous item by a dual argument, and by changing $x$ by $y$ and vice versa.

    \item Suppose that $x \preceq y$. By~(5),  $x \curlywedge -y \equiv \bot$. By~(1), the set $-y$ is stable and so, by~(2), (HBA~3) and distributivity, $-y \equiv -y \curlywedge \top \equiv -y \curlywedge (x \curlyvee -x) \equiv (-y \curlywedge x) \curlyvee (-y \curlywedge -x)$. But $-y \curlywedge x=x \curlywedge -y \equiv \bot$, by~(5). Hence, by~(2),  $-y \equiv \bot \curlyvee (-y \curlywedge -x) \equiv -y \curlywedge -x$. That is, $-y \preceq -x$.

    \item Suppose that $A,A' \subseteq B$ are stable. Assume first that $A \preceq A'$, and let $x \in A$ and $y \in A'$. Then, $x \preceq y$ and so $-y \preceq -x$, by~(6). Since $-y \subseteq -A'$ and $-x \subseteq -A$, and both $-A$ and $-A'$ are stable by~(3), then $-A' \preceq -A$. Now, suppose that $-A' \preceq -A$. By~(3), the sets $-A$ and $-A'$ are stable and so, by the first part of this item, ${-}{-}A \preceq {-}{-}A'$. But then, by~(4),   $A \preceq A'$.

    \item Let $x,y \in B$. Since $x \curlywedge y \preceq x$ and $x \curlywedge y \preceq y$ then, by~(7), $-x \preceq -(x \curlywedge y)$ and  $-y \preceq -(x \curlywedge y)$, hence  $-x \curlyvee -y \preceq -(x \curlywedge y)$.
On the other hand, $-x \preceq -x \curlyvee -y$ and $-y \preceq -x \curlyvee -y$, hence $-(-x \curlyvee -y) \preceq {-}{-}x$ and  $-(-x \curlyvee -y) \preceq {-}{-}y$, by~(7), and so  $-(-x \curlyvee -y) \preceq x$ and  $-(-x \curlyvee -y) \preceq y$, by~(4). This means that $-(-x \curlyvee -y) \preceq x \curlywedge y$ and so, by~(7), $-(x \curlywedge y) \preceq {-}{-}(-x \curlyvee -y)$. By~(4), it follows that $-(x \curlywedge y) \preceq -x \curlyvee -y$. This concludes the proof.

    \item The proof is dual to the one given for item~(8).
\end{enumerate}
\end{proof}

\begin{proposition} \label{prop:defimp}
    Every HBA $\mathsf B$ is an IHL with $a\multimap b=- a\curlyvee b$ for every $a,b\in B$.
\end{proposition}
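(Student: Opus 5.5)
By Proposition~\ref{prop:char:IHL}, it suffices to show that for all $a,b\in B$
\[
-a\curlyvee b=\mathsf{Max}(\mathsf R(a,b)),
\]
where $\mathsf R(a,b)=\{z\in B \ : \ a\curlywedge z\preceq b\}$. Equivalently, we verify (I1)--(I3) for $a\multimap b := -a\curlyvee b$. Throughout we use that $-a$ is stable (Proposition~\ref{prop.HBA}(1)). Hence, by Proposition~\ref{stable-wedge-02}, $-a\curlyvee b$ is stable, and any expression built from $-a$, $b$ and singletons with $\curlywedge,\curlyvee$ can be computed on representatives.

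\emph{(I3).} Let $z\in -a\curlyvee b=\mathsf{Min}(\mathsf{Ub}(\{c,b\}))$ for some $c\in -a$, and let $z'\equiv z$. Then $z'$ is an upper bound of $\{c,b\}$, and it lies below every upper bound because $z$ does. So $z'\in c\curlyvee b\subseteq -a\curlyvee b$.

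\emph{(I1).} Let $z\in -a\curlyvee b$, say $z\in c\curlyvee b$ with $c\in -a$. By distributivity,
\[
a\curlywedge z\subseteq a\curlywedge(c\curlyvee b)=(a\curlywedge c)\curlyvee(a\curlywedge b).
\]
By (HBA~4), $a\curlywedge c\equiv\bot$, and by Proposition~\ref{prop.HBA}(2), $\bot\curlyvee(a\curlywedge b)\equiv a\curlywedge b$. Since $a\curlywedge b\preceq b$, this gives $a\curlywedge z\preceq b$. I expect this to be the main bookkeeping obstacle: the hyperoperation outputs are sets, and $a\curlywedge c$ is a stable set similar to $\bot$ rather than equal to it. The intended route is to replace sets by similar stable sets using Proposition~\ref{stable-wedge-02}(1),(4),(5), as the paper does in the proof of Proposition~\ref{prop.HBA}(5).

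\emph{(I2).} Suppose $a\curlywedge z\preceq b$. Since $-a\curlyvee a\equiv\top$ by (HBA~3), Proposition~\ref{prop.HBA}(2) and distributivity give
\[
z\equiv z\curlywedge\top\equiv z\curlywedge(-a\curlyvee a)\equiv(z\curlywedge -a)\curlyvee(z\curlywedge a).
\]
Now $z\curlywedge -a\preceq -a\preceq -a\curlyvee b$, and $z\curlywedge a\preceq b\preceq -a\curlyvee b$. A join of two stable sets lies below any upper bound common to both, so $z\preceq -a\curlyvee b$.

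Together, (I1)--(I3) and Proposition~\ref{prop:char:IHL} show that $\mathsf B$ is an IHL with $a\multimap b=-a\curlyvee b$.
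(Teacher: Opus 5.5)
Your proof is correct. It differs from the paper's in one step: how the inclusion $\mathsf{Max}(\mathsf R(a,b))\subseteq -a\curlyvee b$ is obtained.

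Your (I1) and (I2) are essentially the paper's first two steps. The first shows $-a\curlyvee b\subseteq\mathsf R(a,b)$; the second shows that every element of $\mathsf R(a,b)$ lies below $-a\curlyvee b$. For (I2) the paper expands $(-a\curlyvee w)\curlywedge(-a\curlyvee a)\equiv -a\curlyvee(w\curlywedge a)$, while you expand $z\curlywedge(-a\curlyvee a)$ by the other distributive law.

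The paper then proves the reverse inclusion by hand. For $w\in\mathsf{Max}(\mathsf R(a,b))$ and $z\in -a$, it checks that $z,b\preceq w$. For any common upper bound $x$ of $z$ and $b$, it uses contraposition and ${-}{-}a\equiv a$ to get $w\curlywedge -x\preceq b\curlywedge -b\equiv\bot$. It then concludes $w\preceq x$ by Proposition~\ref{prop.HBA}(5), so $w\in z\curlyvee b$.

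You replace this by (I3): a Morgado supremoid $c\curlyvee b$ is closed under $\equiv$, which holds in any hyperlattice. The reverse inclusion is then automatic. By (I1) and (I2), any $w\in\mathsf{Max}(\mathsf R(a,b))$ is similar to every element of the nonempty set $-a\curlyvee b$, and (I3) puts $w$ in that set.

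Your route is shorter. It also never invokes (HBA~2): only stability of $-a$, (HBA~3), (HBA~4), boundedness and distributivity are used. The paper's route, by contrast, establishes the identity $-a\curlyvee b=\mathsf{Max}(\mathsf R(a,b))$ by direct computation. Since (I1)--(I3) is exactly the definition of an IHL, you do not need Proposition~\ref{prop:char:IHL} to conclude; it only serves to recover that identity afterwards.
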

\begin{proof}
    We already know for a hyper lattice $\mathsf L$ that it is implicative (i.e, there exists an implication hyperoperator $\multimap$ satisfying I1-I3) iff $a\multimap b=\mathsf{Max}(\mathsf{R}(a,b))=\mathsf{Max}(\{z\in L:a\curlywedge z\preceq b\})$ for all $a,b\in L$.

    Since
    \begin{align*}
        a\curlywedge(- a\curlyvee b)&\equiv(a\curlywedge- a)\curlyvee(a\curlywedge b)\equiv\bot\curlyvee(a\curlywedge b)\equiv a\curlywedge b\preceq b
    \end{align*}
    we have that $- a\curlyvee b\subseteq\mathsf{R}(a,b)$. Now, if $w\curlywedge a\preceq b$ then
    \begin{align*}
        w&\preceq(- a\curlyvee w)\curlywedge\top\equiv(- a\curlyvee w)\curlywedge(- a\curlyvee a)\equiv- a\curlyvee(w\curlywedge a)\preceq - a\curlyvee b
    \end{align*}
    proving that $- a\curlyvee b\subseteq\mathsf{Max}(\mathsf R(a,b))$. In order to prove that $\mathsf{Max}(\mathsf R(a,b)) \subseteq - a\curlyvee b$, let $w \in \mathsf{Max}(\mathsf R(a,b))$ and let $z \in -a$. Then $z \curlywedge a \equiv \bot \preceq b$ and so $z \in \mathsf R(a,b)$, therefore $z \preceq w$. Since $b \curlywedge a \preceq b$ then $b \in \mathsf R(a,b)$ and so $b \preceq w$. Now, let $x$ such that $z \preceq x$ and $b \preceq x$. Then, $-x \preceq -z \subseteq --a \equiv a$, hence $-x \preceq a$.  From this, $w \curlywedge -x \preceq w \curlywedge a \preceq b$. In turn, from  $b \preceq x$ we infer that $-x \preceq -b$ and so $w \curlywedge -x \preceq -x \preceq - b$. This implies that  $w \curlywedge -x \preceq b \curlywedge -b \equiv \bot$ and so, by Proposition~\ref{prop.HBA}(5), $w\preceq x$. This shows that $w \in z \curlyvee b = -a \curlyvee b$. That is, $\mathsf{Max}(\mathsf R(a,b)) \subseteq - a\curlyvee b$ and so $- a\curlyvee b=\mathsf{Max}(R(a,b))=a\multimap b$.
\end{proof}

\begin{proposition} \label{prop:HBA-IHL}
    Let $\mathsf L$ be a bounded IHL (i.e., a Hyper Heyting Algebra) and define $- x:=x\multimap\bot$, for every $x \in L$. Then $\mathsf L$ is a HBA iff $x\in {-}{-} x$, for every $x \in L$.
\end{proposition}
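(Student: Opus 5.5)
The `only if' direction is immediate: if $\mathsf L$ with $-x:=x\multimap\bot$ is an HBA, then (HBA~2) is exactly the condition $x\in{-}{-}x$. So the work is in the `if' direction. There we assume $x\in{-}{-}x$ for all $x$ and verify (HBA~1), (HBA~3) and (HBA~4). Boundedness and distributivity are already available, since every IHL is a distributive hyperlattice.

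\textbf{(HBA~1).} By Proposition~\ref{prop:char:IHL}, $x\multimap\bot=\mathsf{Max}(\mathsf R(x,\bot))$, and each such set of maxima is stable. If $x\equiv x'$, then by Proposition~\ref{stable-wedge-02} we have $x\curlywedge z\equiv x'\curlywedge z$ for every $z$. Hence $\mathsf R(x,\bot)=\mathsf R(x',\bot)$, so $-x=-x'$, and in particular $-x\equiv-x'$.

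\textbf{(HBA~4).} Take $z\in -x$. By (I1), $x\curlywedge z\preceq\bot$. Since $\bot\preceq$ everything, every element of $x\curlywedge z$ is similar to the elements of $\bot$. So $x\curlywedge -x\equiv\bot$, using that $-x$ is stable together with Proposition~\ref{stable-wedge-02}.

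\textbf{(HBA~3).} This is the main obstacle, because excluded middle must be extracted from double negation. My plan is as follows. Let $w\in x\curlyvee -x$, and choose $t\in -w$. By (I1), $w\curlywedge t\preceq\bot$.

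Since $x\preceq w$ and $-x\preceq w$, it follows that $x\curlywedge t\preceq\bot$ and $(-x)\curlywedge t\preceq\bot$. The first gives $t\preceq -x$ by (I2). Combined with the second, this gives $t\equiv t\curlywedge t\preceq (-x)\curlywedge t\preceq\bot$, so $t\in\bot$. Thus $-w\equiv\bot$.

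It then follows that ${-}{-}w=\mathsf{Max}(\mathsf R(\bot,\bot))=\top$. By the hypothesis $w\in{-}{-}w$, we get $w\in\top$, i.e.\ $x\curlyvee -x\equiv\top$.

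The delicate points are the manipulations of inequalities between sets versus elements. I would handle these through Proposition~\ref{stable-wedge-02}(4),(5) and Proposition~\ref{prop:IHL1}, noting that $\mathsf R(\bot,\bot)=L$ and $\mathsf{Max}(L)=\top$.
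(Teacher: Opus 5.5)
Your proof is correct and follows essentially the same route as the paper: (HBA~1) via stability of $\multimap$ (you get the stronger $-x=-x'$ from $\mathsf{Max}(\mathsf R(x,\bot))$), (HBA~4) from (I1), and (HBA~3) by the intuitionistic argument that $-(x\curlyvee -x)\preceq -x$, hence $-(x\curlyvee -x)\preceq\bot$, followed by the double-negation hypothesis. The only cosmetic difference is that you work elementwise with $w\in x\curlyvee -x$ and compute ${-}{-}w=\mathsf{Max}(\mathsf R(\bot,\bot))=\top$ directly. The paper instead argues with stable sets, using Proposition~\ref{prop:IHL1} and ${-}{-}A\equiv A$.
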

\begin{proof}
    The `only if' part is immediate from Definition~\ref{hba}. For the `if' part, assume that $\mathsf L$ is a bounded IHL such that $- x:=x\multimap\bot$ satisfies: $x\in {-}{-} x$, for every $x \in L$. It remains to prove that $L$ satisfies conditions (HBA~1), (HBA~3) and (HBA~4) of  Definition~\ref{hba}. Condition  (HBA~4) clearly holds, by definition of $-$. Condition  (HBA~1) holds by stability of $\multimap$: if $x \equiv y$ then $- x=x\multimap\bot \equiv y\multimap\bot = -y$. Finally, let us prove (HBA~3). The proof of items (3) and (4) of Proposition~\ref{prop.HBA} only uses (HBA~1), and so these properties also hold in $\mathsf L$, that is: if $A$ is stable so is $-A$, and ${-}{-}A\equiv A$. Now, let $x \in L$. Then, $-(x \curlyvee -x) \curlywedge x\preceq -(x \curlyvee -x) \curlywedge (x \curlyvee -x) \preceq \bot$ and so $-(x \curlyvee -x) \preceq x \multimap \bot = -x$, by Proposition~\ref{prop:IHL1}(2). From this, $-(x \curlyvee -x) \preceq x \curlyvee -x$ and so $-(x \curlyvee -x) \preceq -(x \curlyvee -x) \curlywedge (x \curlyvee -x)\preceq \bot$. Since $x \curlyvee -x$ is stable, this implies that $x \curlyvee -x \equiv {-}{-}(x \curlyvee -x) = \top$, by Proposition~\ref{prop:IHL1}(1).
\end{proof}

\begin{example}
Let $\mathcal{V} = \{p_0,p_1,p_2,\ldots\}$ be  denumerable set of propositional variables, and let $For(\Sigma_C)$ be the free algebra of propositional formulas generated by $\mathcal{V}$ over the signature $\Sigma_C=\{\vee,\wedge, \to, \neg\}$.\footnote{When the arity of the connectives is obvious from the notation, as in the present case, (finite) signatures will we represented simply as a set of symbols.} Define the following relation in $For(\Sigma_C)$: $\alpha \preceq \beta$ iff $\alpha \vdash_{CPL} \beta$ (iff $\vdash_{CPL} \alpha \to \beta$), where $\vdash_{CPL}$ is the (Tarskian and finitary) consequence relation of classical logic CPL presented over the signature $\Sigma_C$. As discussed in~\cite[Example~1(2.)]{con:gol:rob:2025}, $\langle For(\Sigma_C), \preceq\rangle$ is an hyperlattice in which $\alpha \equiv \beta$ iff $\vdash_{CPL}(\alpha \leftrightarrow \beta)$,\footnote{As usual, $\alpha \leftrightarrow \beta$ is an abbreviation for $(\alpha \to \beta) \land (\beta \to \alpha)$.} and infimoids and supremoids are given, respectively, by
$$\alpha \curlywedge \beta=\{ \varphi \ : \ \vdash_{CPL} \varphi \leftrightarrow (\alpha \land\beta)\} \ \ \mbox{ and } \ \ \alpha \curlyvee \beta=\{ \varphi \ : \ \vdash_{CPL} \varphi \leftrightarrow (\alpha \lor\beta)\}.$$
Moreover, it is a HBA in which $-\alpha=\{ \varphi \ : \  \ \vdash_{CPL} \varphi \leftrightarrow \neg \alpha\}$, $\top=\{ \varphi \ : \ \  \vdash_{CPL} \varphi\}$  and  $\bot=\{ \varphi \ : \ \ \vdash_{CPL} \neg\varphi\}$. From this, and by Proposition~\ref{prop:defimp}, it is easy to show that
$$\alpha \multimap \beta=\{ \varphi \ : \ \ \vdash_{CPL} \varphi \leftrightarrow (\alpha \to\beta)\}=\{ \varphi \ : \ \ \vdash_{CPL} \varphi \leftrightarrow (\neg\alpha \vee\beta)\}.$$
\end{example}

\begin{definition}
    Let $A_1,A_2$ be HBA's. A function $f:A_1\rightarrow A_2$ is a \textbf{HBA-morphism} if it satisfies the following, for every $x,y \in A_1$:\\[1mm]
    $\begin{array}{ll}
        (1) \ f(x \curlywedge y) \subseteq f(x) \curlywedge f(y); & (2) \ f(x \curlyvee y) \subseteq f(x) \curlyvee f(y); \\[1mm]
         (3) \ f(\top) \subseteq \top; & (4) \ f(\bot) \subseteq \bot; \\[1mm]
        (5) \ f(-x)\subseteq-f(x).
    \end{array}$
\end{definition}

\section{The Logic mbC}\label{sec4}

The logic {mbC}, extensively analyzed in~\cite[Chapter 2]{carnielli2016paraconsistent}, serves as the cornerstone of the Logics of Formal Inconsistency (LFIs). It is defined as the minimal extension of positive classical logic ($\mathsf{CPL}^{+}$) containing a paraconsistent negation $\neg$ and a consistency operator $\circ$ that satisfies the conditions of an LFI. The axiom (bc1) (see Definition~\ref{def:mbC}), known as the \emph{gentle explosion law}, ensures that the Principle of Explosion applies only to formulas explicitly marked as consistent. As shown in~\cite{carnielli2016paraconsistent}, classical logic can be fully recovered within {mbC} (for instance, via a definable strong negation $\sim \alpha \equiv \alpha \to (\neg \alpha \land \circ \alpha)$), which justifies our investigation into its algebraic counterpart---the Hyper Boolean Algebras---as a natural generalization of Boolean Algebras.

\begin{definition} [Hilbert Calculus for  $\mathsf{CPL}^{+}$]\label{hilbert-calculus}
The Hilbert calculus for $\mathsf{CPL}^{+}$ is defined over the signature $\Sigma_+= \{\land,\vee,\to\}$  as follows:
\\[2mm]
{\bf Axiom schemas:}
\begin{multicols}{2}
    \begin{description}
    \item[(AX1)] $\alpha  \to (\beta  \to \alpha )$
    \item[(AX2)] $(\alpha  \to (\beta  \to \gamma )) \to ((\alpha  \to \beta ) \to (\alpha  \to \gamma ))$
    \item[(AX3)] $\alpha  \to (\beta  \to (\alpha  \land \beta ))$
    \item[(AX4)] $(\alpha  \land \beta ) \to \alpha$
    \item[(AX5)] $(\alpha  \land \beta ) \to \beta$
    \item[(AX6)] $\alpha  \to (\alpha  \lor \beta )$
    \item[(AX7)] $\beta  \to (\alpha  \lor \beta )$
    \item[(AX8)] $(\alpha  \to \gamma ) \to ((\beta  \to \gamma ) \to	((\alpha  \lor \beta ) \to \gamma ))$
    \item[(AX9)] $\alpha  \lor (\alpha \to \beta)$
\end{description}
\end{multicols}

\textbf{Inference rule:}
\begin{description}
    \item[(MP)] $\inferrule{\alpha\quad\alpha\rightarrow\beta}{\beta}$ 
\end{description}
\end{definition}

It is worth noting that $\mathsf{CPL}^{+}$ is semantically characterized by the class of classical implicative lattices.

\begin{definition}[Logic {mbC} \cite{car:con:16}] \label{def:mbC}
    The logic {mbC} is given by the Hilbert calculus in the signature $\Sigma = \{\land,\vee,\to,\neg, \circ\}$ obtained by adding to $\mathsf{CPL}^{+}$ the following axioms:\\[1mm]
     $\begin{array}{ll}
       \mbox{\bf(EM)} \ \alpha\lor \neg \alpha & \hspace{1cm} \mbox{\bf(bc1)} \ \circ \alpha\to (\alpha \to (\neg \alpha \to \beta))
    \end{array}$
\end{definition}

\section{Hyper mbC Algebras and Hyper Swap structures for mbC}\label{sec5}

The concept of \emph{swap structures} for LFIs was originally introduced in~\cite[Chapter 6]{carnielli2016paraconsistent} as a family of multialgebras defined over standard Boolean Algebras, providing a unified non-deterministic semantic framework for these logics. In this section, we generalize this construction by defining swap structures over \emph{Hyper Boolean Algebras} (HBAs). This approach mirrors the strategy successfully applied in~\cite{con:gol:rob:2025} for the logic $C_\omega$, where swap structures were defined over implicative hyperlattices. By replacing the underlying Boolean Algebra with a HBA, we derive the notion of \emph{Hyper Swap Structures} for {mbC}, denoted as $S^{mbC}(B)$. These structures naturally induce the class of \emph{Hyper mbC Algebras} (HmbCAs), which validate the Excluded Middle and the Gentle Explosion Law within a hyperalgebraic context, thereby extending the standard valuation semantics.

\begin{definition}\label{hyper-mbc-algebra}
    A {\em hyperalgebra for mbC} (or {\em hyper  mbC algebra}, or simply a HmbCA) is a hyperalgebra $\mathsf A=\langle A,\curlywedge,\curlyvee,-,\bot,\top,\neg,\circ\rangle$ such that the reduct~\mbox{$\langle A,\curlywedge,\curlyvee,\bot,\top\rangle$}  is an HBA and the hyperoperators $\neg$ and $\circ$  satisfy the following properties, for every $x, y, w \in A$:

\begin{description}
    \item[(EM)] $x\curlyvee y\equiv\top$ for all $y \in \neg x$;
    \item[(BC)] $(x\curlywedge y)\curlywedge z\equiv\bot$ for all $y \in \neg x$ and all $z\in\circ x$.
\end{description}

\noindent It is immediate to see that conditions (EM) and (BC) can be written in a concise way as follows:

\begin{description}
    \item[(EM')] $x \curlyvee \neg x \equiv \top$;
    \item[(BC')] $(x\curlywedge\neg x)\curlywedge\circ x \equiv \bot$.
\end{description}

\noindent
for every $x \in A$. 
\end{definition}
\begin{definition}
    A {\em non-deterministic matrix} (or an {\em Nmatrix}) over $\Theta$ is a pair $\mathcal{M}=\langle \mathsf A, D
    \rangle$  such that $\mathsf A$  is a $\Theta$-hyperalgebra and $\emptyset \neq D \subseteq A$. A {\em valuation} over $\mathcal{M}$ is a function $v:For(\Theta) \to A$ such that $v(\#(\varphi_1,\ldots,\varphi_n)) \in \mathcal{O}(\#)(v(\varphi_1),\ldots,v(\varphi_n))$ for $\# \in \Theta_n$ and $\varphi_1,\ldots,\varphi_n \in For(\Theta)$.
The logic generated by an Nmatrix $\mathcal{M}$ is defined as follows: $\Gamma \models_{\mathcal{M}}\varphi$ iff, for every valuation $v$ over $\mathcal{M}$, if  $v(\gamma)\in D$ for every $\gamma \in \Gamma$ then $v(\varphi)\in D$.
\end{definition}

\begin{definition} [HmbCA semantics] \label{def-sem-HmbCA} Let $\mathsf A$ be a HmbCA, and let $\Gamma \cup \{\varphi\}$ be a set of formulas over $\Sigma$.
\begin{enumerate}
    \item The Nmatrix associated with $\mathsf A$ is $\mathcal M^{\mathsf{HmbCA}}_{\mathsf A}=\langle \mathsf A, \top\rangle$.

    \item  $\varphi$ is a semantical consequence of $\Gamma$ w.r.t. $\mathsf A$ if $\Gamma \models_{\mathcal M^{\mathsf{HmbCA}}_{\mathsf A}} \varphi$.

    \item Let $\mathsf{HmbCA}$ be the class of HmbCAs. Then, $\varphi$ is a semantical consequence of $\Gamma$ w.r.t.  HmbCAs, denoted by $\Gamma \models_{\mathsf{HmbCA}} \varphi$, if $\Gamma \models_{\mathcal M^{\mathsf{HmbCA}}_{\mathsf A}} \varphi$ for every $\mathsf A \in \mathsf{HmbCA}$.
\end{enumerate}
\end{definition}

\begin{definition} [Hyper Swap structures for mbC] \label{def-hyper-swap-mbc}
Let $\mathsf B=\langle B, \curlywedge,\curlyvee,-, \top,\bot\rangle$ be a HBA. Let 
$$S^{\mathsf{mbC}}_{\mathsf B}=\{z \in B \times B\times B \ : \ z_1 \curlyvee z_2\equiv\top\mbox{ and } z_1\curlywedge z_2\curlywedge z_3\equiv\bot\}.$$
The {\em hyper swap structure} for $mbC$ over $\mathsf B$ is the hyperalgebra $\mathsf S^{\mathsf{mbC}}(\mathsf B)=\langle S^{\mathsf{mbC}}_{\mathsf B}, \curlywedge,\curlyvee,\multimap,\neg,\circ\rangle$ over the signature $\Sigma$ such that the hyperoperators are defined as follows:\footnote{By simplicity, the hyperoperators and the induced preordering in $\mathsf S^{\mathsf{mbC}}(\mathsf B)$ will be denoted by using the same symbols than in $\mathsf B$. The context will avoid any confusion.}
\begin{align*}
    z\curlywedge w&:=\{u\in S^{\mathsf{mbC}}_{\mathsf B} \ : \ u_1\in z_1\curlywedge w_1 \}\\
    z\curlyvee w&:=\{u\in S^{\mathsf{mbC}}_{\mathsf B} \ : \ u_1\in z_1\curlyvee w_1 \}\\
    z\multimap w&:=\{u\in S^{\mathsf{mbC}}_{\mathsf B} \ : \ u_1\in z_1\multimap w_1 \}\\
    \neg z&:=\{u\in S^{\mathsf{mbC}}_{\mathsf B} \ : \ u_1=z_2\}\\
    \circ z&:=\{u\in S^{\mathsf{mbC}}_{\mathsf B} \ : \ u_1=z_3\}.
\end{align*}
 We may denote $S^{\mathsf{mbC}}(B)$ simply by $S^{\mathsf{mbC}}_{\mathsf B}$.
\end{definition}

The hyperoperations in  $S^{\mathsf{mbC}}_{\mathsf B}$  can be described in a more succinct way as follows:
\begin{align*}
    z\curlywedge w&=(z_1\curlywedge w_1, \, \_,\, \_)\\
    z\curlyvee w&=(z_1\curlyvee w_1, \, \_,\, \_)\\
    z\multimap w&=(z_1\multimap w_1, \, \_,\, \_)\\
    \neg z&=(z_2, \, \_,\, \_)\\
    \circ z&=(z_3, \, \_,\, \_)
\end{align*}


Following the usual definitions for swap structures, each hyper swap structure can be naturally associated with an Nmatrix:

\begin{definition} \label{def-Nmatrix-mbC}
 Let $\mathsf A$ be a HBA. The Nmatrix associated with $\mathsf S^{\mathsf{mbC}}(\mathsf A)$ is $\mathcal{M}^{\mathsf{mbC}}(\mathsf A)=\langle \mathsf S^{\mathsf{mbC}}(\mathsf A),D^{\mathsf{mbC}}(\mathsf A) \rangle$ where the set of designated truth-values is $D^{\mathsf{mbC}}(\mathsf A)=\{z \in \mathsf S^{\mathsf{mbC}}_{\mathsf A} \ : \ z_1\in\top\}$.
\end{definition}

\begin{proposition} \label{swap-SHmbC}
Let $\mathsf A$ be  a HBA, and let $\mathsf S^{\mathsf{mbC}}(\mathsf A)$ be the hyper swap structure for mbC over $\mathsf A$. Then:
\begin{enumerate}
    \item The relation $z \preceq w$  in  $\mathsf S^{\mathsf{mbC}}(\mathsf A)$  iff  $z_1 \preceq w_1$  in $\mathsf A$
    defines a  preorder such that $\mathsf S^{\mathsf{mbC}}(\mathsf A)$ is an hyperlattice where, for every $z,w \in  S^{\mathsf{mbC}}(\mathsf A)$, $z\curlywedge w$ and $z\curlyvee w$ are the infimoid and the supremoid of $z$ and $w$, respectively.
    Moreover, $z \equiv w$ in  $\mathsf S^{\mathsf{mbC}}(\mathsf A)$ iff $z_1 \equiv w_1$ in $\mathsf A$. 
    \item $\mathsf S^{\mathsf{mbC}}(\mathsf A)$ is a HmbCA.
    Moreover, $D^{\mathsf{mbC}}(\mathsf A)=\top$.
    \item $\mathcal{M}^{\mathsf{mbC}}(\mathsf A)= \mathcal M_{\mathsf S^{\mathsf{mbC}}(\mathsf A)}$.
\end{enumerate}
\end{proposition}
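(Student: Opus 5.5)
The plan is to transport everything along the first projection $\pi_1 : S^{\mathsf{mbC}}_{\mathsf A} \to A$, $\pi_1(z)=z_1$. The first step, on which everything else rests, is to show that $\pi_1$ is \emph{surjective}. Given $a \in A$, pick $b \in -a$ (nonempty, since $-$ is a hyperoperation) and any $c \in A$, and consider $(a,b,c)$.
\begin{itemize}
\item By (HBA~3), $a \curlyvee b \equiv \top$.
\item By (HBA~4) and Proposition~\ref{stable-wedge-02}, $(a \curlywedge b)\curlywedge c \equiv \bot \curlywedge c$.
\item $\bot \curlywedge c$ consists of lower bounds of $\bot$, hence of minima, so $\bot\curlywedge c\equiv\bot$.
\end{itemize}
Hence $(a,b,c)\in S^{\mathsf{mbC}}_{\mathsf A}$. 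In particular every set of the form $\{u \in S^{\mathsf{mbC}}_{\mathsf A} : u_1 \in X\}$ with $\emptyset\neq X\subseteq A$ is nonempty, so all hyperoperations of Definition~\ref{def-hyper-swap-mbc} are well defined. I write $\pi_1^{-1}(X)$ for this set.

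For item (1), defining $z\preceq w$ iff $z_1\preceq w_1$ gives reflexivity and transitivity at once, and $z\equiv w$ iff $z_1\equiv w_1$. Since the order is pulled back and $\pi_1$ is surjective, I will check directly that $\mathsf{Ub}(\{z,w\})=\pi_1^{-1}(\mathsf{Ub}(\{z_1,w_1\}))$ and $\mathsf{Min}(\pi_1^{-1}(X))=\pi_1^{-1}(\mathsf{Min}(X))$. Surjectivity is what guarantees that every element of $X$ is realized as a first coordinate. It follows that $\mathsf{Min}(\mathsf{Ub}(\{z,w\}))=\pi_1^{-1}(z_1\curlyvee w_1)=z\curlyvee w$, and dually for $\curlywedge$. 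So $\mathsf S^{\mathsf{mbC}}(\mathsf A)$ is a hyperlattice whose infimoids and supremoids are the given hyperoperations.

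For item (2) I need the reduct to be an HBA. The statement's signature has no $-$, so I will take $-z:=\pi_1^{-1}(-z_1)$, which coincides with $z\multimap\bot$ via Proposition~\ref{prop:defimp}. The same argument as for item (1) gives the following.
\begin{itemize}
\item $\top_S=\pi_1^{-1}(\top)$ and $\bot_S=\pi_1^{-1}(\bot)$, so boundedness holds and $D^{\mathsf{mbC}}(\mathsf A)=\top$ by definition.
\item For nonempty $X,Y\subseteq A$, $\pi_1^{-1}(X)\,\#\,\pi_1^{-1}(Y)=\pi_1^{-1}(X\#Y)$ for $\#\in\{\curlywedge,\curlyvee\}$. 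This follows by unfolding the set-extension, since each $x\#y$ is itself a preimage.
\end{itemize}
Consequently every set equation among $\curlywedge,\curlyvee,-$ in $S$ is the preimage of the corresponding equation in $\mathsf A$, and every $\equiv$-statement in $S$ reduces to one in $\mathsf A$. Distributivity and (HBA~1)--(HBA~4) then transfer directly. For (HBA~2) I use that $z_1\in{-}{-}z_1$ gives $z\in\pi_1^{-1}({-}{-}z_1)={-}{-}z$.

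Next come (EM) and (BC). If $y\in\neg x$, then $y_1=x_2$, so $x\curlyvee y=\pi_1^{-1}(x_1\curlyvee x_2)$. This is $\equiv\top_S$ because $x_1\curlyvee x_2\equiv\top$ and equivalence is pulled back. If moreover $z\in\circ x$, then $z_1=x_3$, and $(x\curlywedge y)\curlywedge z=\pi_1^{-1}((x_1\curlywedge x_2)\curlywedge x_3)\equiv\bot_S$ by the defining condition of $S^{\mathsf{mbC}}_{\mathsf A}$. Item (3) is then immediate: both Nmatrices have the same hyperalgebra and the same designated set $D^{\mathsf{mbC}}(\mathsf A)=\top$.

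The main obstacle is the bookkeeping step that the set-level operations commute with $\pi_1^{-1}$, i.e.\ $\pi_1^{-1}(X)\#\pi_1^{-1}(Y)=\pi_1^{-1}(X\#Y)$. This needs surjectivity of $\pi_1$ so that no element of $X\#Y$ is lost. I would prove it once as a lemma and then use it uniformly for distributivity, associativity in (BC), and the HBA axioms.
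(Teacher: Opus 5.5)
Your proposal is correct and follows essentially the paper's proof. Surjectivity of $\pi_1$ is exactly the paper's Fact~1, which uses the witness $(a,b,c)$ with $b\in\top$, $c\in\bot$ instead of your $b\in -a$. Everything after that is obtained, as in the paper, by pulling the preorder back along the first coordinate; your commutation lemma systematizes this and also makes explicit the distributivity that the paper passes over.

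There is one harmless inaccuracy, which the paper shares: $\pi_1^{-1}(-z_1)$ need not coincide with $z\multimap\bot$. In $\mathsf A$ the set $z_1\multimap\bot$ is closed under $\equiv$ by (I3), whereas $-z_1$ is only stable, so the two agree only up to $\equiv$. Since you take $-z:=\pi_1^{-1}(-z_1)$ as the definition and check (HBA~1)--(HBA~4) for it directly, nothing in your argument depends on this identification.
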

\begin{proof}
$ $
\begin{enumerate}
    \item The verification that $\preceq$ is a preorder in $\mathsf S^{\mathsf{mbC}}(\mathsf A)$ is straightforward.
    
{\bf Fact 1:} If $a \in A$, $b \in \top$ and $c \in \bot$ then $(a,b,c) \in \mathsf S^{\mathsf{mbC}}(\mathsf A)$.

Indeed, if $b \in \top$, $c \in \bot$ and $x \in A$ then  $x \preceq b \preceq a \curlyvee b$. This means that $a \curlyvee b \equiv \top$. In turn, $a \curlywedge b \curlywedge c \preceq c \preceq x$, and so $a \curlywedge b \curlywedge c \equiv \bot$. This means that $(a,b,c) \in \mathsf S^{\mathsf{mbC}}(\mathsf A)$. This proves  {\bf Fact~1}.

As a direct consequence of {\bf Fact 1}, and given that $z_1 \curlywedge w_1$ and $z_1 \curlyvee w_1$ are non empty sets, it follows that $z \curlywedge w$ and $z \curlyvee w$ are non empty sets for every $z,w \in \mathsf S^{\mathsf{mbC}}(\mathsf A)$.
Now, let us prove that $z\curlyvee w$ is the supremoid in  $\mathsf S^{\mathsf{mbC}}(\mathsf A)$ of $z$ and $w$. Thus, let $u \in z\curlyvee w$. By definition, $u_1 \in z_1 \curlyvee w_1$ and so $z_1,w_1 \preceq u_1$. Then, $z,w \preceq u$ and so $u \in \mathsf{Ub}(\{z,w\})$. Let $x \in \mathsf{Ub}(\{z,w\})$. Then, $z,w \preceq x$ and so $z_1,w_1 \preceq x_1$, therefore $z_1 \curlyvee w_1 \preceq x_1$. From this, $u_1 \preceq x_1$, hence $u \preceq x$. This shows that $u \in \mathsf{Min}(\mathsf{Ub}(\{z,w\}))$, that is, $z\curlyvee w \subseteq \mathsf{Min}(\mathsf{Ub}(\{z,w\}))$. Now, let  $u \in \mathsf{Min}(\mathsf{Ub}(\{z,w\}))$. Since $z,w \preceq u$ then $z_1,w_1 \preceq u_1$, that is, $u_1 \in \mathsf{Ub}(\{z_1,w_1\})$. Let $a \in  \mathsf{Ub}(\{z_1,w_1\})$. By {\bf Fact~1}, $(a,b,c) \in  \mathsf S^{\mathsf{mbC}}(\mathsf A)$ for any  $b \in \top$ and $c \in \bot$, and $z,w \preceq (a,b,c)$. Thus, $u \preceq (a,b,c)$ which implies that $u_1 \preceq a$. Then, $u_1 \in \mathsf{Min}(\mathsf{Ub}(\{z_1,w_1\})) = z_1 \curlyvee w_1$. That is, $u \in  z \curlyvee w$ and so $z\curlyvee w = \mathsf{Min}(\mathsf{Ub}(\{z,w\}))$. The proof that $z\curlywedge w = \mathsf{Max}(\mathsf{Lb}(\{z,w\}))$ is analogous. This shows that  $\mathsf S^{\mathsf{mbC}}(\mathsf A)$ is an hyperlattice where $z \equiv w$ in $\mathsf S^{\mathsf{mbC}}(\mathsf A)$ iff $z_1 \equiv w_1$ in $\mathsf A$.

    \item Note that $\mathsf{Max}(\mathsf S^{\mathsf{mbC}}(\mathsf A))= \{z \in \mathsf S^{\mathsf{mbC}}_{\mathsf A} \ : \ z_1\in\top\}$. By {\bf Fact 1}, this set is non empty. Analogously, $\mathsf{Min}(\mathsf S^{\mathsf{mbC}}(\mathsf A))= \{z \in \mathsf S^{\mathsf{mbC}}_{\mathsf A} \ : \ z_1\in\bot\}$ is non empty. This shows that $\mathsf S^{\mathsf{mbC}}(\mathsf A)$ is bounded. Without risk of confusion, the sets $\mathsf{Min}(\mathsf S^{\mathsf{mbC}}(\mathsf A))$ and $\mathsf{Max}(\mathsf S^{\mathsf{mbC}}(\mathsf A))$ will be denoted, respectively, by $\bot$ and $\top$. Let $-z:=z \multimap \bot$, for any $z \in S^{\mathsf{mbC}}_{\mathsf B}$.    
    The validity of Axioms (HBA~1), (HBA~3) and (HBA~4) of Definition~\ref{hba} for $\mathsf S^{\mathsf{mbC}}(\mathsf A)$ follows from the validity of these axioms in $\mathsf A$ and the fact that if $z,w\in \mathsf S^{\mathsf{mbC}}(\mathsf A)$, then $z \preceq w$ ($z\equiv w$) iff $z_1\preceq w_1$ ($z_1\equiv w_1$). For (HBA~2), let $z\in S^{\mathsf{mbC}}(\mathsf A)$. Since $-a=a \multimap \bot$ in $\mathsf A$ (for every $a \in A$) then $- z=\{w\in S^{\mathsf{mbC}}(\mathsf A) \ : \ w_1 \in -z_1\}$. Thus $z\in {-}{-} z=\{w\in S^{\mathsf{mbC}}(\mathsf A) \ : \ w_1 \in {-}{-}z_1\}$, since  $\mathsf A$ satisfies (HBA~2). Then, $\mathsf S^{\mathsf{mbC}}(\mathsf A)$ is a HBA which is also a HmbCA by construction (the verification of the validity of EM and BC of Definition~\ref{hyper-mbc-algebra} is similar). Moreover, we have
    \begin{align*}
        D^{\mathsf{mbC}}(\mathsf A)&=\{z \in \mathsf S^{\mathsf{mbC}}_{\mathsf A} \ : \ z_1\in\top\}=\mathsf{Max}(\mathsf S^{\mathsf{mbC}}(\mathsf A))=\top.
    \end{align*}

    \item It follows by the fact that $D^{\mathsf{mbC}}(\mathsf A)= \mathsf{Max}(\mathsf S^{\mathsf{mbC}}(\mathsf A))$, and from the definitions.
\end{enumerate}
\end{proof}

\begin{definition} [Hyper Swap structures semantics for  mbC] \label{def-sem-swap-mbC} Let $\Gamma \cup \{\varphi\}$ be a set of formulas over mbC. Then, $\varphi$ is a semantical consequence of $\Gamma$ w.r.t. hyper swap structures, denoted by $\Gamma \models_{\mathsf{mbC}}^{\mathsf{HSW}} \varphi$, whenever $\Gamma \models_{\mathcal{M}^{\mathsf{mbC}}(\mathsf A)} \varphi$ for every hyper Boolean algebra $\mathsf A$.
\end{definition}

\noindent In order to prove  soundness and completeness of $mbC$ w.r.t. hyper swap structures semantics, we recall here some well-known  notions and results concerning (Tarskian) logics.

Given a Tarskian and finitary logic {\bf L} and a set of formulas $\Delta \cup \{ \varphi\}$ of {\bf L}, the set $\Delta$ is said to be {\em $\varphi$-saturated in} {\bf L} if the following holds:~(i)~$\Delta \nvdash_{\bf L} \varphi$; and~(ii)~if $\psi \notin \Delta$ then $\Delta,\psi \vdash_{\bf L}\varphi$.

It follows immediately that any  $\varphi$-saturated set in a  Tarskian logic is deductively closed, i.e.: $\psi \in \Delta$ iff $\Delta \vdash_{\bf L} \psi$.

By a classical result proven by Lindenbaum and \L o\'s,  if $\Gamma \cup \{ \varphi\}$ is a set of formulas of a Tarskian and finitary logic {\bf L} such that $\Gamma \nvdash_{\bf L} \varphi$, then there exists a $\varphi$-saturated set $\Delta$  such that $\Gamma \subseteq \Delta$.\footnote{For a proof of this result see, for instance, \cite[Theorem~22.2]{wojcicki1984lectures} or~\cite[Theorem~2.2.6]{carnielli2016paraconsistent}.} Since mbC is a Tarskian and finitary logic, Lindenbaum-\L o\'s Theorem holds for it.

\begin{theorem} [Soundness and completeness of mbC w.r.t. hyperstructures semantics] \label{Sound-comple-mbC} \ \\
Let $\Gamma \cup \{\varphi\}$ be a set of formulas over $\Sigma$. The following assertions are equivalent:
\begin{enumerate}
    \item $\Gamma \vdash_{\mathsf{mbC}} \varphi$;
    \item $\Gamma \models_{\mathsf{HmbCA}} \varphi$;
    \item $\Gamma \models_{\mathsf{mbC}}^{\mathsf{HSW}} \varphi$.
\end{enumerate}
\end{theorem}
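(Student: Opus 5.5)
I will establish the cycle of implications (1)$\Rightarrow$(2)$\Rightarrow$(3)$\Rightarrow$(1).

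\emph{(1)$\Rightarrow$(2): soundness.} Fix a HmbCA $\mathsf A$ and a valuation $v$ over $\mathcal M^{\mathsf{HmbCA}}_{\mathsf A}=\langle \mathsf A,\top\rangle$. It suffices to show two things: each axiom instance is mapped into $\top$, and $\top$ is closed under MP.

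For the implication I use Proposition~\ref{prop:defimp}. In $\mathsf A$ we have $x\multimap y=\mathsf{Max}(\mathsf R(x,y))$. By Proposition~\ref{prop:IHL1}(1), $v(\alpha\to\beta)\in\top$ iff $v(\alpha)\preceq v(\beta)$.

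\begin{itemize}
\item MP: if $v(\alpha)\in\top$ and $v(\alpha)\preceq v(\beta)$, then $v(\beta)\in\top$.
\item (AX1)--(AX8): each reduces to a standard inequality in an IHL. These are handled via (I1), (I2), Proposition~\ref{prop:IHL1}(2) and Proposition~\ref{stable-wedge-02}. Stability of the relevant sets means that any chosen representative $v(\cdot)$ in $x\curlywedge y$, $x\curlyvee y$ or $x\multimap y$ behaves like a class representative.
\item (AX9): reduces to $x\curlyvee(-x\curlyvee y)\equiv\top$, using (HBA~3).
\item (EM): is exactly the condition (EM) of Definition~\ref{hyper-mbc-algebra}.
\item (bc1): we need $c\preceq x\multimap(n\multimap y)$ for $c\in\circ x$ and $n\in\neg x$. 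By Proposition~\ref{prop:IHL1}(2) twice, this reduces to $(c\curlywedge x)\curlywedge n\preceq y$. That follows from (BC), since the left side is $\equiv\bot$.
\end{itemize}

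The routine but delicate point is handling hyper-operations throughout. Values are single elements chosen from stable sets, so I will consistently replace sets by representatives using Proposition~\ref{stable-wedge-02}.

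\emph{(2)$\Rightarrow$(3).} By Proposition~\ref{swap-SHmbC}(2)--(3), every $\mathsf S^{\mathsf{mbC}}(\mathsf A)$ is a HmbCA and its Nmatrix is exactly $\mathcal M^{\mathsf{mbC}}(\mathsf A)$. So validity over all HmbCAs implies validity over all hyper swap structures.

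\emph{(3)$\Rightarrow$(1): completeness.} This is the main step. Suppose $\Gamma\nvdash_{\mathsf{mbC}}\varphi$, and take a $\varphi$-saturated $\Delta\supseteq\Gamma$ by Lindenbaum--\L o\'s. Let $\mathsf B$ be the Lindenbaum hyperlattice of $\mathsf{CPL}^+$-provability \emph{inside mbC}. Concretely, its carrier is $For(\Sigma)$ with $\alpha\preceq\beta$ iff $\vdash_{\mathsf{mbC}}\alpha\to\beta$, and it is built as in the CPL example:
\begin{itemize}
\item $\alpha\curlywedge\beta=\{\gamma:\vdash\gamma\leftrightarrow(\alpha\land\beta)\}$, and similarly for $\curlyvee$;
\item $-\alpha=\{\gamma:\vdash\gamma\leftrightarrow{\sim}\alpha\}$, where ${\sim}\alpha:=\alpha\to\bot_0$ and $\bot_0:=p\land\neg p\land\circ p$ is the bottom particle of mbC.
\end{itemize}
Since mbC contains classical positive logic and ${\sim}$ is a classical negation, $\mathsf B$ is an HBA. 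The verification runs as in the CPL example.

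This choice fails to give a countermodel directly, however: all of $\Delta$'s formulas need not be provable. I will instead use the quotient by $\Delta$. Set $\alpha\preceq_\Delta\beta$ iff $\Delta\vdash\alpha\to\beta$; the resulting $\mathsf B_\Delta$ is still an HBA. Its top is $\top=\{\gamma:\Delta\vdash\gamma\}=\Delta$, which is the feature that makes the countermodel work.

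Now define $v(\alpha)=(\alpha,\neg\alpha,\circ\alpha)$. This triple lies in $S^{\mathsf{mbC}}_{\mathsf B_\Delta}$ by (EM) and (bc1): we have $\alpha\lor\neg\alpha$ provable, and $\alpha\land\neg\alpha\land\circ\alpha\vdash\beta$ for every $\beta$. By construction $v$ respects the hyper-operations; for instance $v(\alpha\land\beta)_1=\alpha\land\beta\in\alpha\curlywedge\beta$, and $v(\neg\alpha)_1=\neg\alpha=v(\alpha)_2$. Then $v(\gamma)\in D$ iff $\gamma\in\Delta$. Hence $v$ designates all of $\Gamma$ but not $\varphi$, so $\Gamma\not\models^{\mathsf{HSW}}_{\mathsf{mbC}}\varphi$.

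The main obstacle I expect is checking that $\mathsf B_\Delta$ is genuinely a bounded distributive HBA. This includes nonemptiness of the infimoids and supremoids, and (HBA~1)--(HBA~4) for $-$ modulo $\Delta$. I would handle this by noting that $\preceq_\Delta$ is the preorder of a Boolean algebra (the Lindenbaum algebra of the theory $\Delta$ in the classical fragment of mbC). Every such preorder yields an HBA whose hyperoperations are the preimages of the Boolean operations.
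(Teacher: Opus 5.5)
Your proposal is correct and follows essentially the same route as the paper's proof. The steps match: soundness by checking each axiom (with $\to$ read through Proposition~\ref{prop:defimp}) and closure of $\top$ under MP; (2)$\Rightarrow$(3) via Proposition~\ref{swap-SHmbC}; and completeness via the preorder $\preceq_\Delta$ for a $\varphi$-saturated $\Delta$, with $\top=\Delta$ and the canonical valuation $\alpha\mapsto(\alpha,\neg\alpha,\circ\alpha)$. Your explicit complement $\alpha\to\bot_0$, with $\bot_0=p\land\neg p\land\circ p$, also makes precise the paper's rather loose description of $-$ in $\mathsf A_\Delta$.
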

\begin{proof}
$(1) \Rightarrow (2)$ (Soundness of   mbC w.r.t. HmbCAs). The verification of validity of axioms (AX1)-(Ax8) and axiom EM was proven in~\cite[Theorem~1]{con:gol:rob:2025}. After this, the validity of axioms (AX9) and BC is obtained after a straightforward calculation involving the axioms of hyper mbC algebra and HBA. \\[1mm]
$(2) \Rightarrow (3)$. It follows by Proposition~\ref{swap-SHmbC}, items~(2) and~(3).\\[1mm]
$(3) \Rightarrow (1)$ (Completeness of mbC w.r.t. hyper swap structures semantics). Suppose that $\Gamma \nvdash _{\mathsf{mbC}} \varphi$. Then, by the Lindenbaum-\L o\'s result mentioned above, there exists a $\varphi$-saturated set $\Delta$ in mbC such that $\Gamma \subseteq \Delta$. Now, define  a relation $\preceq_\Delta$ over $For(\Sigma)$ as follows: $\alpha \preceq_\Delta \beta$ iff $\Delta \vdash _{\mathsf{mbC}} \alpha \to \beta$. We have that $\preceq_\Delta$ is a preorder, given that mbC contains positive classical logic $\mathsf{CPL^+}$, hence $\vdash_{\mathsf{mbC}} \alpha \to \alpha$, and  $\alpha \to \beta, \, \beta \to \gamma \vdash_{\mathsf{mbC}} \alpha \to \gamma$. Observe that, with this preorder, $\alpha \equiv \beta$ iff  $\Delta \vdash _{\mathsf{mbC}} \alpha \leftrightarrow \beta$, where $\alpha \leftrightarrow \beta$ is an abbreviation for  $(\alpha \to \beta) \land (\beta \to \alpha)$. Using again the fact that mbC is an axiomatic extension of $\mathsf{CPL^+}$, it is straightforward to prove that $\langle For(\Sigma), \preceq_\Delta \rangle$ is an hyperlattice such that $\alpha \curlywedge \beta =\{\gamma \ : \ \Delta \vdash _{\mathsf{mbC}} \gamma \leftrightarrow (\alpha \land\beta)\}$, and  $\alpha \curlyvee \beta =\{\gamma \ : \ \Delta \vdash _{\mathsf{mbC}} \gamma \leftrightarrow (\alpha \vee\beta)\}$. Moreover, it is a HBA (that we will call $\mathsf{A}_\Delta$)  such that ${-}\alpha=\{\gamma \ : \ \Delta \vdash _{\mathsf{mbC}} \gamma \leftrightarrow {-}\alpha\}$ and $\alpha \multimap \beta =\{\gamma \ : \ \Delta \vdash _{\mathsf{mbC}} \gamma \leftrightarrow (\alpha \to\beta)\}$. Observe that $\top=\{\gamma \ : \ \Delta \vdash _{\mathsf{mbC}} \gamma\}=\Delta$ and $\bot=\{\gamma \ : \ \Delta \vdash _{\mathsf{mbC}}\gamma \leftrightarrow \bot\}=\bot_{\mathsf{mbC}}$.

Let $\mathsf S^{\mathsf{mbC}}(\mathsf{A}_\Delta)$ be the hyper swap structure for mbC over $\mathsf{A}_\Delta$, with domain $S^{\mathsf{mbC}}_{\mathsf{A}_\Delta}$, as in Definition~\ref{def-hyper-swap-mbc}, and let $\mathcal{M}^{\mathsf{mbC}}(\mathsf{A}_\Delta)$ be the associated Nmatrix (see Definition~\ref{def-Nmatrix-mbC}). Notice that 
$$S^{\mathsf{mbC}}(\mathsf{A}_\Delta)=\{(\alpha,\beta,\gamma) \ : \ \Delta \vdash_{\mathsf{mbC}} \alpha \vee \beta\mbox{ and }\Delta \vdash_{\mathsf{mbC}}\alpha\wedge\beta\wedge\gamma\rightarrow\bot_{\mathsf{mbC}}\}$$
and 
$$D^{\mathsf{mbC}}(\mathsf{A}_\Delta)=\{(\alpha,\beta,\gamma) \ : \ \Delta \vdash_{\mathsf{mbC}}\alpha\}.$$
Let $v_\Delta:For(\Sigma) \to S^{\mathsf{mbC}}(\mathsf{A}_\Delta)$ be the canonical map given by $v_\Delta(\alpha)= (\alpha,\neg \alpha,\circ\alpha)$, for every $\alpha$. We have that $v_\Delta$ is a valuation over $\mathcal{M}^{\mathsf{mbC}}(\mathsf{A}_\Delta)$. Indeed, \\

$
\begin{array}{lll}
     v_\Delta(\alpha\land \beta)&=& (\alpha\land\beta,\neg (\alpha\land \beta),\circ(\alpha\land\beta)) \\[1mm]
     &\in& \{(\delta_1,\delta_2,\delta_3) \in S^{\mathsf{mbC}}(\mathsf{A}_\Delta) \ : \ \delta_1 \in \alpha \curlywedge \beta\}\\[1mm] 
     &=& v_\Delta(\alpha) \curlywedge v_\Delta(\beta),\\[1mm]
\end{array}$


\noindent
given that $\alpha \land \beta \in \alpha \curlywedge \beta=\{\gamma \ : \ \Delta \vdash _{\mathsf{mbC}} \gamma \leftrightarrow (\alpha \land\beta)\}$. Analogously we prove that 
$$v_\Delta(\alpha\vee \beta) \in v_\Delta(\alpha) \curlyvee v_\Delta(\beta) \ \ \mbox{ and } \ \ v_\Delta(\alpha\to \beta) \in v_\Delta(\alpha) \multimap v_\Delta(\beta).$$
By axioms (EM) and (BF) we have that $\Delta\vdash_{\mathsf{mbC}}\alpha\vee\neg\alpha$ and $\Delta\vdash_{\mathsf{mbC}}\alpha\wedge\neg\alpha\wedge\circ\alpha\rightarrow\bot_{\mathsf{mbC}}$, which provide
\begin{align*}
    v_\Delta(\neg\alpha)&= (\neg\alpha,\neg\neg \alpha,\circ\neg\alpha) \in \{(\delta_1,\delta_2,\delta_3)\in S^{\mathsf{mbC}}(\mathsf{A}_\Delta) \ : \ \delta_1 = \neg\alpha\}
    = \neg v_\Delta(\alpha).
\end{align*}
Similarly we get $v_\Delta(\circ\alpha)\in\circ v_\Delta(\alpha)$. Moreover, $v_\Delta(\alpha)\in D^{\mathsf{mbC}}(\mathsf{A}_\Delta)$ iff $\Delta \vdash_{\mathsf{mbC}} \alpha$. From this, $v_\Delta(\alpha)\in D^{\mathsf{mbC}}(\mathsf{A}_\Delta)$ for every $\alpha \in \Gamma$, while $v_\Delta(\varphi)\notin D^{\mathsf{mbC}}(\mathsf{A}_\Delta)$, given that $\Delta \nvdash _{\mathsf{mbC}} \varphi$. This shows that $\Gamma \not\models_{\mathcal{M}^{\mathsf{mbC}}(\mathsf{A}_\Delta)} \varphi$ and so $\Gamma \not\models_{\mathsf{mbC}}^{\mathsf{HSW}} \varphi$.

This completes the proof.
\end{proof}

\begin{example} 
Consider the following set $B=\{0,a,b\}$ with the relation $\preceq$ on $B$ defined by the following prescriptions:
\begin{align*}
    0\preceq0,&\qquad a\preceq a,\qquad b\preceq b,\qquad 0\preceq a,\qquad0\preceq b,\qquad a\preceq b,\qquad b\preceq a.
\end{align*}
    
    In other words, $B$ is one of the HBAs such that $(B/\equiv) \cong\bm2$, where $\bm2=\{\bot,\top\}$ is the 2-element Boolean algebra. The hyperoperators on $B$ are given by:
    \begin{center}
\begin{tabular}{| c || c | c | c | }
\hline 
$\curlywedge$ & $0$ & $a$ & $b$ \\ \hline \hline
$0$ & $0$ & $0$ & $0$\\ \hline
$a$ & $0$ & $\{a,b\}$ & $\{a,b\}$\\ \hline
$b$ & $0$ & $\{a,b\}$ & $\{a,b\}$\\ \hline
\end{tabular} \hspace{5mm}
\begin{tabular}{| c || c | c | c | }
\hline
$\curlyvee$ & $0$ & $a$ & $b$ \\ \hline \hline
$0$ & $0$ & $\{a,b\}$ & $\{a,b\}$\\ \hline
$a$ & $\{a,b\}$ & $\{a,b\}$ & $\{a,b\}$\\ \hline
$b$ & $\{a,b\}$ & $\{a,b\}$ & $\{a,b\}$\\ \hline
\end{tabular} 

\ \\[2mm]

\begin{tabular}{| c || c | c | c | }
\hline
$\multimap$ & $0$ & $a$ & $b$ \\ \hline \hline
$0$ & $\{a,b\}$ & $\{a,b\}$ & $\{a,b\}$\\ \hline
$a$ & $0$ & $\{a,b\}$ & $\{a,b\}$\\ \hline
$b$ & $0$ & $\{a,b\}$ & $\{a,b\}$\\ \hline
\end{tabular} \hspace{5mm}
\begin{tabular}{| c ||  c |}
\hline
 & $-$  \\ \hline\hline
$0$ & $\{a,b\}$ \\ \hline
$a$ & $0$\\ \hline
$b$ & $0$ \\ \hline
\end{tabular}
\end{center}

    The hyper swap structure for mbC over $L$ is given by
    \begin{align*}
        \mathcal S^{\mathsf{mbC}}(B)&=\{(a,0,0),(a,0,b),(b,0,a),(b,0,b),
        (a,a,0),(a,b,0),(b,a,0),(b,b,0),\\
        &(0,a,a),(0,a,b),(0,b,a),(0,b,b),
        (a,0,0),(b,0,0),(0,a,0),(0,b,0)\}.
    \end{align*}
\end{example}

\section{Kalman Functors for mbC}\label{sec6}

Having established the algebraic definitions, we now address the categorical relationship between Hyper Boolean Algebras and the hyperalgebraic semantics for {mbC}. As discussed in the Introduction, a direct adjunction between the category of standard Boolean Algebras and the category of hyper swap structures is obstructed by the non-deterministic nature of the latter. To resolve this, we adapt the categorical framework developed in~\cite{con:gol:rob:2026} for the logic $C_\omega$. By restricting our attention to the category of \emph{Enriched Hyper mbC Algebras} (EHmbCA), we prove that the functor $S^{\mathsf{mbC}}$ induces an equivalence of categories between HBAs and EHmbCAs. This result generalizes the algebraic analysis of swap structures initiated in~\cite[Chapter 6]{carnielli2016paraconsistent} to the categorical level, formally legitimizing HBAs as the algebraic counterpart of {mbC} in the hyperalgebraic setting.

\begin{definition}
    The category \textbf{HmbCA} is the one where the objects are HmbCAs and the morphisms are just the usual morphisms of hyperalgebras. In other words, given $\mathsf A_1,\mathsf A_2\in\mbox{HmbCA}$, a function $f:A_1\rightarrow A_2$ is  a HmbCA-morphism from $\mathsf A_1$ to $\mathsf A_2$ if it is a HBA-morphism such that, for all $x,y\in A_1$, the following holds:
    \begin{enumerate}
        \item $y\in\neg x$ implies $f(y)\in\neg f(x)$;
        \item $y\in\circ x$ implies $f(y)\in\circ f(x)$.
    \end{enumerate}
\end{definition}

\begin{theorem}
    The hyper swap structure construction provides a Kalman functor $\mathsf S^{\mathsf{mbC}}:\textbf{HBA}\rightarrow \textbf{HmbCA}$.
\end{theorem}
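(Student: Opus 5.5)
To show that $\mathsf S^{\mathsf{mbC}}$ is a functor $\textbf{HBA}\rightarrow \textbf{HmbCA}$, we must give its action on objects and on morphisms and then check functoriality. On objects, Proposition~\ref{swap-SHmbC}(2) already says that $\mathsf S^{\mathsf{mbC}}(\mathsf B)$ is a HmbCA for every HBA $\mathsf B$. On morphisms we follow the classical Kalman recipe: for an HBA-morphism $f:\mathsf B_1\to\mathsf B_2$ we set
$$\mathsf S^{\mathsf{mbC}}(f)(z_1,z_2,z_3):=(f(z_1),f(z_2),f(z_3)).$$
The work then splits into three steps: this map lands in $S^{\mathsf{mbC}}_{\mathsf B_2}$, it is a HmbCA-morphism, and identities and composites are preserved.

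\emph{Well-definedness.} This is the step I expect to be the main obstacle. We need the similarity conditions $z_1\curlyvee z_2\equiv\top$ and $z_1\curlywedge z_2\curlywedge z_3\equiv\bot$ to transfer along $f$.
\begin{itemize}
\item \emph{Condition for $\curlyvee$.} Pick $u\in z_1\curlyvee z_2$, so $u\in\top$. Then $f(u)\in f(z_1)\curlyvee f(z_2)$ by HBA-morphism condition (2), and $f(u)\in\top$ by condition (3). Since $f(z_1)\curlyvee f(z_2)$ is stable and contains an element of $\top$, we get $f(z_1)\curlyvee f(z_2)\equiv\top$.
\item \emph{Condition for $\curlywedge$.} Pick $u\in z_1\curlywedge z_2$ and $v\in u\curlywedge z_3\subseteq\bot$. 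By condition (1), $f(v)\in f(u)\curlywedge f(z_3)\subseteq (f(z_1)\curlywedge f(z_2))\curlywedge f(z_3)$, and $f(v)\in\bot$ by condition (4). Proposition~\ref{stable-wedge-02} gives stability of the iterated infimoid, and hence $f(z_1)\curlywedge f(z_2)\curlywedge f(z_3)\equiv\bot$.
\end{itemize}
The delicate point is that the morphism conditions are inclusions of images into hyperoperations rather than equalities. One must therefore choose witnesses inside the nonempty hypersets and use stability to promote membership of a single element to similarity of whole sets.

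\emph{Morphism conditions.} Write $F=\mathsf S^{\mathsf{mbC}}(f)$.
\begin{itemize}
\item \emph{Conditions (1)--(4).} If $u\in z\curlywedge w$, then $u_1\in z_1\curlywedge w_1$, so $f(u_1)\in f(z_1)\curlywedge f(w_1)$. Since $F(u)$ lies in $S^{\mathsf{mbC}}_{\mathsf B_2}$ by the previous step, $F(u)\in F(z)\curlywedge F(w)$. The cases of $\curlyvee$, $\top$ and $\bot$ are identical, since $\top$ and $\bot$ in the swap structure are determined by first coordinates, as in the proof of Proposition~\ref{swap-SHmbC}(2).
\item \emph{Condition (5).} Here $-z=z\multimap\bot=\{w : w_1\in -z_1\}$, and $f(-z_1)\subseteq -f(z_1)$ gives the claim.
\item \emph{Condition for $\neg$.} If $y\in\neg x$, then $y_1=x_2$, so $F(y)_1=f(x_2)=F(x)_2$, i.e.\ $F(y)\in\neg F(x)$.
\item \emph{Condition for $\circ$.} This is the same argument using the third coordinate.
\end{itemize}

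\emph{Functoriality.} The identities $F(\mathrm{id})=\mathrm{id}$ and $\mathsf S^{\mathsf{mbC}}(g\circ f)=\mathsf S^{\mathsf{mbC}}(g)\circ\mathsf S^{\mathsf{mbC}}(f)$ are immediate from the componentwise definition.
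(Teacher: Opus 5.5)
Your proposal follows the paper's route: the componentwise action $(z_1,z_2,z_3)\mapsto(f(z_1),f(z_2),f(z_3))$, the transfer of the two similarity conditions, and functoriality. It is more complete than the paper's proof. The paper compresses well-definedness into ``since $f$ is a morphism'' and never checks that $\mathsf S^{\mathsf{mbC}}(f)$ is a HmbCA-morphism. Your witness-plus-stability argument is correct, and so are your verifications of (1)--(4), $\neg$ and $\circ$.

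The one step that needs repair is condition (5). You use the identity $-z=z\multimap\bot=\{w : w_1\in -z_1\}$, which the paper also asserts in the proof of Proposition~\ref{swap-SHmbC}(2). It holds only up to similarity. For $c\in\bot$, $z_1\multimap c=-z_1\curlyvee c$ is the whole $\equiv$-class of the elements of $-z_1$, whereas the HBA axioms only make $-z_1$ stable, not closed under $\equiv$.

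For instance, take $\{0,0',1,1'\}$ with $0\equiv 0'\preceq 1\equiv 1'$ and set $-1=\{0\}$, $-1'=\{0'\}$, $-0=\{1\}$, $-0'=\{1'\}$. This satisfies (HBA 1)--(HBA 4), yet $1\multimap 0=\{0,0'\}\neq -1$.

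So in the swap structure the correct description is $-z=\{w\in S^{\mathsf{mbC}}_{\mathsf B} : w_1\equiv y \text{ for some } y\in -z_1\}$. The inclusion $f(-z_1)\subseteq -f(z_1)$ alone then no longer gives $\mathsf S^{\mathsf{mbC}}(f)(-z)\subseteq -\mathsf S^{\mathsf{mbC}}(f)(z)$.

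The missing ingredient is that HBA-morphisms are monotone:
\begin{itemize}
\item If $x\preceq y$, then $x\in x\curlywedge y$.
\item By condition (1), $f(x)\in f(x)\curlywedge f(y)$, i.e.\ $f(x)\preceq f(y)$.
\item Hence $f$ preserves $\equiv$, and $w_1\equiv y\in -z_1$ yields $f(w_1)\equiv f(y)\in -f(z_1)$.
\end{itemize}
Together with your well-definedness step, this is exactly membership of $\mathsf S^{\mathsf{mbC}}(f)(w)$ in $-\mathsf S^{\mathsf{mbC}}(f)(z)$. With this addition your argument is complete.
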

\begin{proof}
    We only need to extend $\mathsf S^{\mathsf{mbC}}$ to  morphisms. Let $f:\mathsf A_1\rightarrow \mathsf A_2$ be a \textbf{HBA}-morphism. Define $\mathsf S^{\mathsf{mbC}}(f):S^{\mathsf{mbC}}(\mathsf A_1)\rightarrow S^{\mathsf{mbC}}(\mathsf A_2)$ by the rule $\mathsf S^{\mathsf{mbC}}(f)(z_1,z_2,z_3):=(f(z_1),f(z_2),f(z_3))$. Since $f$ is a morphism,  $z_1\curlyvee z_2\equiv\top$ and $z_1\curlywedge z_2\curlywedge z_3\equiv\bot$, we get $f(z_1)\curlyvee f(z_2)\equiv\top$ and $f(z_1)\curlywedge f(z_2)\curlywedge f(z_3)\equiv\bot$, proving that $\mathsf{S}^{\mathsf{mbC}}(f)$ is well defined. After that, the fact that $\mathsf S^{\mathsf{mbC}}(1_{\mathsf A_1})=1_{\mathsf S^{\mathsf{mbC}}(\mathsf A_1)}$ and $\mathsf S^{\mathsf{mbC}}(f\circ g)=\mathsf S^{\mathsf{mbC}}(f)\circ \mathsf S^{\mathsf{mbC}}(g)$ is a straightforward calculation involving the above definitions.
\end{proof}

\begin{definition}[Enriched Hyper mbC Algebras] \label{enriched-mbc} \ \\
    Let $\mathsf A=\langle A,\curlywedge,\curlyvee,-,\bot,\top,\neg,\circ\rangle$ be a HmbCA. We say that $\mathsf A$ is an {\em enriched hyper mbC algebra (EHmbCA)} if it satisfies the following additional axioms, for all $x,y,z\in A$:
    \begin{description}
        \item[E0 -] $x\in\neg\neg x$ and $x\in{\circ}{\circ} x$.
        
        \item[E1 -] $\neg x$ and $\circ x$ are stable.
        
        \item[E2 -] The following relation $\sim$ is a transitive relation (which implies, considering E0, that it is an equivalence relation):
        $$x\sim y\mbox{ iff there exists }z\mbox{ such that }x,y\in\neg z\mbox{ or }x,y\in\circ z.$$
        Note that, by E1, $x\sim y$ implies $x\equiv y$. 

        \item[E3 -] If $x\curlyvee y\equiv\top$ and $x\curlywedge y\curlywedge z\equiv\bot$ then there exists $w$ such that $x\sim w$, $y\sim\neg w$ and $z\sim\circ w$.\footnote{Remember that, for $X,Y\subseteq\mathsf A$, $X\sim Y$ denotes that $x\sim y$ for all $x\in X$ and all $y\in Y$.}
        \item [E4 -] If $x\sim y$, $\neg x\sim\neg y$ and $\circ x\sim\circ y$ then $x=y$.
    \end{description}
    For $x\in A$ we denote $[x]:=\{y \ : \ x\sim y\}$ and
    $$\mathsf U^{\mathsf{mbC}}(\mathsf A):=A/{\sim}=\{[x] \ : \ x\in A\}.$$
    The category \textbf{EHmbCA} is the one where the objects are EHmbCAs and the morphisms are the \textbf{HmbCA}-morphisms $f:\mathsf A_1\rightarrow \mathsf A_2$.
\end{definition}

It is worth noting that enriched hyper mbC algebras abstract the notion of hyper swap structures for  mbC, as the following result shows:

\begin{proposition}
    Let $\mathsf A$ be a HBA. Then the hyper swap $S^{\mathsf{mbC}}(\mathsf A)$ is an enriched hyper mbC algebra.
\end{proposition}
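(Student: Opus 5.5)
The plan is to verify the axioms E0--E4 of Definition~\ref{enriched-mbc} one at a time. By Proposition~\ref{swap-SHmbC}(2), $\mathsf S^{\mathsf{mbC}}(\mathsf A)$ is already a HmbCA, so only these five axioms need checking. Everything rests on the fact that $\neg$ and $\circ$ in $\mathsf S^{\mathsf{mbC}}(\mathsf A)$ only prescribe the first coordinate of the output. I also need the following variant of \textbf{Fact 1} from the proof of Proposition~\ref{swap-SHmbC}. For any $a,b\in A$ with $a\curlyvee b\equiv\top$ (for instance $b\in\top$, or $b=a$ and $a$ arbitrary after swapping roles) and any $c\in\bot$, the triple $(a,b,c)$ lies in $S^{\mathsf{mbC}}_{\mathsf A}$, because $a\curlywedge b\curlywedge c\preceq c$.

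The first and central step is to identify the relation $\sim$ of E2: I claim that $x\sim y$ iff $x_1=y_1$. If $x,y\in\neg z$ then $x_1=z_2=y_1$, and if $x,y\in\circ z$ then $x_1=z_3=y_1$. Conversely, suppose $x_1=y_1$. Take $z=(b,x_1,c)$ with $b\in\top$ and $c\in\bot$. This $z$ lies in $S^{\mathsf{mbC}}_{\mathsf A}$ by the variant of Fact 1, and $x,y\in\neg z$ by definition of $\neg$. Hence $\sim$ is equality of first coordinates, so it is transitive and E2 holds. E1 is immediate for the same reason. All elements of $\neg x$ share the first coordinate $x_2$, and all elements of $\circ x$ share $x_3$. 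By Proposition~\ref{swap-SHmbC}(1), $z\equiv w$ iff $z_1\equiv w_1$, so both sets are stable.

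For E0, to show $x\in\neg\neg x$ I exhibit $u\in\neg x$ with $u_2=x_1$, namely $u=(x_2,x_1,c)$ with $c\in\bot$. It lies in $S^{\mathsf{mbC}}_{\mathsf A}$ since $x_2\curlyvee x_1=x_1\curlyvee x_2\equiv\top$. Then $x\in\neg u$ because $x_1=u_2$. The case $x\in{\circ}{\circ}x$ is the step I expect to be the main obstacle. The naive choice $u=(x_3,b,x_1)$ with $b\in\top$ fails, since $x_3\curlywedge b\curlywedge x_1$ need not be $\equiv\bot$. Instead I would take $u=(x_3,d,x_1)$ with $d\in -x_3$. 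By (HBA~3), $x_3\curlyvee d\equiv\top$. By (HBA~4) together with Proposition~\ref{stable-wedge-02}, $x_3\curlywedge d\curlywedge x_1\equiv\bot\curlywedge x_1\equiv\bot$. So $u\in S^{\mathsf{mbC}}_{\mathsf A}$, $u\in\circ x$ since $u_1=x_3$, and $x\in\circ u$ since $x_1=u_3$.

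Finally, E3 and E4 follow from the description of $\sim$. For E3, the hypotheses $x\curlyvee y\equiv\top$ and $x\curlywedge y\curlywedge z\equiv\bot$ in $\mathsf S^{\mathsf{mbC}}(\mathsf A)$ translate, via Proposition~\ref{swap-SHmbC}(1) and the definition of the hyperoperations, into $x_1\curlyvee y_1\equiv\top$ and $x_1\curlywedge y_1\curlywedge z_1\equiv\bot$ in $\mathsf A$. Hence $w=(x_1,y_1,z_1)\in S^{\mathsf{mbC}}_{\mathsf A}$. Then $x\sim w$, every element of $\neg w$ has first coordinate $y_1$, so $y\sim\neg w$, and similarly $z\sim\circ w$. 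For E4, the three hypotheses give $x_1=y_1$, $x_2=y_2$ and $x_3=y_3$ respectively, so $x=y$.
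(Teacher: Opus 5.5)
Your proof is correct and follows the paper's argument: you verify E0--E4 directly, using that $\sim$ in $\mathsf S^{\mathsf{mbC}}(\mathsf A)$ is equality of first coordinates, with the same witnesses $(x_2,x_1,\cdot)$ for $x\in\neg\neg x$ and $w=(x_1,y_1,z_1)$ for E3. You also fill in two details the paper leaves implicit, and both are correct: the witness $z=(b,x_1,c)$ for the converse direction of the characterization of $\sim$, and the choice $u=(x_3,d,x_1)$ with $d\in -x_3$ for $x\in{\circ}{\circ}x$, which is needed because the paper only says ``similarly'' there and the coordinate swap $(x_3,x_2,x_1)$ need not belong to $S^{\mathsf{mbC}}_{\mathsf A}$.
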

\begin{proof}
    Let $z\in S^{\mathsf{mbC}}(\mathsf A)$. Since $\neg z=\{w\in S^{\mathsf{mbC}}(\mathsf A) \ : \ w_1=z_2\}$ and $(z_2,z_1,z_3)\in \neg z$, we get $z=(z_1,z_2,z_3)\in \neg\neg z$. Similarly $z\in{\circ}{\circ} z$. Moreover, from
    \begin{align*}
        \neg z&=\{w\in S^{\mathsf{mbC}}(\mathsf A) \ : \ w_1=z_2\}\\
        \circ z&=\{u\in S^{\mathsf{mbC}}(\mathsf A) \ : \ u_1=z_3\}
    \end{align*}
    and $z\equiv w$ iff $z_1\equiv w_1$, we get $\neg z$ and $\circ z$ are stable, which complete the proof of E0 and E1. Now, from the very description of $\neg$ and $\circ$ we get that $z\sim w$ in $S^{\mathsf{mbC}}(\mathsf A)$ iff $z_1=w_1$, which provides E2. 

    For E3, let $x,y,z\in \mathsf S^{\mathsf{mbC}}(\mathsf A)$ with $x\curlyvee y\equiv\top$ and $x\curlywedge y\curlywedge z\equiv\bot$. This occurs iff $x_1\curlyvee y_1\equiv\top$ and $x_1\curlywedge y_1\curlywedge z_1\equiv\bot$. Picking $w=(x_1,y_1,z_1)\in \mathsf A\times \mathsf A\times \mathsf A$, we get $w\in \mathsf S^{\mathsf{mbC}}(\mathsf A)$, $x\sim w$, $y\sim\neg w$ and $z\sim\circ w$.

    Finally, to prove E4, suppose that $x\sim y$, $\neg x\sim\neg y$ and $\circ x\sim\circ y$. Since $z\sim w$ in $S^{\mathsf{mbC}}(\mathsf A)$ iff $z_1=w_1$, $x\sim y$ implies that $x_1=y_1$. Moreover $\neg x\sim\neg y$ implies that $x_2=y_2$ and $\circ x\sim\circ y$ implies that $x_3=y_3$, providing $x=y$.
\end{proof}

\begin{proposition} \label{functor U-mbC}
    Let $\mathsf A$ be an EHmbCA. Then $\langle\mathsf U^{\mathsf{mbC}}(\mathsf A),\preceq_{\mathsf U^{\mathsf{mbC}}(\mathsf A)}\rangle$ is a HBA (which will be also denoted by $\mathsf U^{\mathsf{mbC}}(\mathsf A)$) with the relation $\preceq_{\mathsf U^{\mathsf{mbC}}(\mathsf A)}$ defined by $[x]\preceq_{\mathsf U^{\mathsf{mbC}}(\mathsf A)}[y]$ iff $x\preceq y$. Moreover, the assignment $\mathsf A\mapsto \mathsf U^{\mathsf{mbC}}(\mathsf A)$ provides a functor $\mathsf U^{\mathsf{mbC}}:\textbf{EHmbCA}\rightarrow \textbf{HBA}$.
\end{proposition}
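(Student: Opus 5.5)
The plan is to treat $\mathsf U^{\mathsf{mbC}}(\mathsf A)$ as a quotient of $\mathsf A$ by the fine relation $\sim$. By E1 we have that $\sim$ is contained in $\equiv$, so the relation $[x]\preceq[y]$ iff $x\preceq y$ is well defined. Indeed, if $x\sim x'$ and $y\sim y'$, then $x\equiv x'$ and $y\equiv y'$, and transitivity of $\preceq$ in $\mathsf A$ gives the claim. Reflexivity and transitivity of the relation on classes are inherited from $\mathsf A$, and $[x]\equiv[y]$ iff $x\equiv y$. Since the new preorder is the quotient of the old one, I would check directly the following equalities:
\[
[x]\curlyvee[y]=\{[u] : u\in x\curlyvee y\},\qquad [x]\curlywedge[y]=\{[u] : u\in x\curlywedge y\},
\]
\[
\top_{\mathsf U}=\{[t] : t\in\top\},\qquad \bot_{\mathsf U}=\{[b] : b\in\bot\}.
\]
For instance, $[u]$ is a minimal upper bound of $\{[x],[y]\}$ iff $u$ is a minimal upper bound of $\{x,y\}$, because upper bounds and minimality are expressed only through $\preceq$. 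Hence $\mathsf U^{\mathsf{mbC}}(\mathsf A)$ is a bounded hyperlattice. Distributivity transfers as well, because these are equalities of sets that are unions of $\sim$-classes (equivalently, of stable sets described via $\preceq$).

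Next I would define $-[x]:=\{[u] : u\in -x\}$. This is independent of the representative: if $x\sim x'$ then $x\equiv x'$, so $-x\equiv -x'$ by (HBA~1). To get equality of sets rather than mere similarity, I would instead set $-[x]:=\{[u] : u\equiv v \text{ for some } v\in -x\}$, i.e. the $\equiv$-saturation. One checks via (HBA~1) and Proposition~\ref{prop.HBA}(1) that this is a single $\equiv$-block, and such blocks are exactly what the hyperoperations of $\mathsf A$ already produce, since minima and maxima sets are $\equiv$-closed. With this definition, (HBA~1), (HBA~3) and (HBA~4) pass to the quotient immediately, because they are statements about $\equiv$. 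For (HBA~2), note that $x\in{-}{-}x$ gives $[x]\in{-}{-}[x]$.

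For functoriality, given an \textbf{EHmbCA}-morphism $f:\mathsf A_1\to\mathsf A_2$, I define $\mathsf U^{\mathsf{mbC}}(f)([x]):=[f(x)]$. The main obstacle is well-definedness. Suppose $x\sim y$, say $x,y\in\neg z$. Then $f(x),f(y)\in\neg f(z)$ by the preservation of $\neg$, so $f(x)\sim f(y)$; the case $x,y\in\circ z$ is identical. Since $\sim$ is generated as an equivalence relation by such pairs, and is already transitive by E2, this suffices. The morphism conditions then follow from $f(x\curlywedge y)\subseteq f(x)\curlywedge f(y)$ and its analogues for $\curlyvee$, $-$, $\top$ and $\bot$, together with the descriptions of the quotient hyperoperations above. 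Identities and composition are preserved trivially. A secondary subtle point is the verification that $u\equiv u'$ and $u\in x\curlyvee y$ imply $u'\in x\curlyvee y$. This holds because $\mathsf{Min}(\mathsf{Ub})$ is closed under $\equiv$, and it is what makes the quotient hyperoperations coincide with images of those of $\mathsf A$.
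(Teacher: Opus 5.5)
Your proof is correct and follows the paper's route. The quotient preorder is well defined because E1 gives ${\sim}\subseteq{\equiv}$, the hyperlattice operations on $A/{\sim}$ are the images of those of $\mathsf A$, and $\mathsf U^{\mathsf{mbC}}(f)([x])=[f(x)]$ is well defined because $f$ preserves $\neg$ and $\circ$.

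You go further than the paper's sketch by putting a negation on the quotient and checking (HBA~1)--(HBA~4). Your $\equiv$-saturated $-[x]$ is the right choice, since the naive image $\{[u] : u\in -x\}$ can depend on the representative when $-x$ is not $\equiv$-closed. The one step you leave implicit is that $\mathsf U^{\mathsf{mbC}}(f)$ preserves this saturated negation only because HBA-morphisms are monotone. That follows from $u\in u\curlywedge v$ whenever $u\preceq v$.
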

\begin{proof}
    Note that, since $\neg z$ and $\circ z$ are stable for all $z\in A$, $x\sim x'$ and $y\sim y'$ implies that $x\equiv x'$ and $y\equiv y'$. This implies that $x\preceq y$ iff $x'\preceq y'$, and $\preceq_{\mathsf U^{\mathsf{mbC}}(\mathsf A)}$ is well-defined. To obtain the HBA structure for $\mathsf U^{\mathsf{mbC}}(\mathsf A)$, just observe that
    \begin{align*}
        [x]\curlywedge_{\mathsf (\mathsf A)}[y]&:=\{[z] \ : \ z\in x\curlywedge y\}\\
        [x]\curlyvee_{\mathsf U(\mathsf A)}[y]&:=\{[z] \ : z\in x\curlyvee y\}
    \end{align*}
    are well-defined non-empty sets, which constitute the basis for a HBA structure over  $\langle\mathsf U^{\mathsf{mbC}}(\mathsf A),\preceq_{\mathsf U^{\mathsf{mbC}}(\mathsf A)}\rangle$. To finish the proof and to obtain a functor, let $f:\mathsf A_1\rightarrow \mathsf A_2$ be an \textbf{EHmbCA}-morphism and define $\mathsf U^{\mathsf{mbC}}(f):\mathsf U^{\mathsf{mbC}}(\mathsf A_1)\rightarrow \mathsf U^{\mathsf{mbC}}(\mathsf A_2)$ given by the rule $\mathsf U^{\mathsf{mbC}}(f)([x])=[f(x)]$. If $x\sim x'$ then $x,x'\in\neg w$ for some $w\in A_1$ (or $x,x'\in\circ z$ for some $z\in A_1$), which implies that $f(x),f(x')\in\neg f(w)$ (or $f(x),f(x')\in\circ f(z)$) and so $f(x)\sim f(x')$. Hence, $[f(x)]=[f(x')]$ and $\mathsf U^{\mathsf{mbC}}(f)$ is well-defined.
    
    After that, the fact that $\mathsf U^{\mathsf{mbC}}(1_{\mathsf A_1})=1_{\mathsf U^{\mathsf{mbC}}(\mathsf A_1)}$ and $\mathsf U^{\mathsf{mbC}}(f\circ g)=\mathsf U^{\mathsf{mbC}}(f)\circ \mathsf U^{\mathsf{mbC}}(g)$ is a straightforward calculation involving the above definitions.
\end{proof}

Observe that, since the hyper swap structure $S^{\mathsf{mbC}}(\mathsf A)$ over a HBA $\mathsf A$ is a EHmbCA, the Kalman functor $\mathsf S^{\mathsf{mbC}}:\textbf{HBA}\rightarrow \textbf{HmbCA}$ can be seen as a functor $\mathsf S^{\mathsf{mbC}}:\textbf{HBA}\rightarrow \textbf{EHmbCA}$. 

\begin{theorem}\label{equivalence-01-mbC}
    For all $\mathsf A\in \textbf{HBA}$ there is an isomorphism $\Phi_{\mathsf A}:\mathsf A\rightarrow \mathsf U^{\mathsf{mbC}}(\mathsf S^{\mathsf{mbC}}(\mathsf A))$. Moreover, this provides a natural isomorphism of functors $\Phi:1_{\textbf{HBA}}\Rightarrow \mathsf U^{\mathsf{mbC}}\circ \mathsf S^{\mathsf{mbC}}$ given by $\mathsf A\mapsto\Phi_{\mathsf A}$.
\end{theorem}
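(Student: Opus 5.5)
We plan to define $\Phi_{\mathsf A}:A\to \mathsf U^{\mathsf{mbC}}(\mathsf S^{\mathsf{mbC}}(\mathsf A))$ by $\Phi_{\mathsf A}(a):=[(a,b,c)]$, where $b\in\top$ and $c\in\bot$ are fixed. By Fact~1 in the proof of Proposition~\ref{swap-SHmbC}, $(a,b,c)\in S^{\mathsf{mbC}}_{\mathsf A}$. The key tool is the description of $\sim$ in $\mathsf S^{\mathsf{mbC}}(\mathsf A)$ established in the proof that hyper swap structures are EHmbCAs: $z\sim w$ iff $z_1=w_1$. Hence $[(a,b,c)]=\{z\in S^{\mathsf{mbC}}_{\mathsf A} : z_1=a\}$, so $\Phi_{\mathsf A}$ does not depend on the choice of $b,c$.

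Injectivity follows directly, since $[z]$ determines $z_1$. For surjectivity, take any class $[z]$; then $[z]=\Phi_{\mathsf A}(z_1)$. Thus $\Phi_{\mathsf A}$ is a bijection, with inverse $[z]\mapsto z_1$. It remains to check that both $\Phi_{\mathsf A}$ and its inverse are HBA-morphisms. This is a unfolding of definitions.
\begin{itemize}
\item \emph{Order.} $[z]\preceq[w]$ iff $z\preceq w$ iff $z_1\preceq w_1$, by Proposition~\ref{swap-SHmbC}(1). So $\Phi_{\mathsf A}$ is an order-isomorphism, and $\top,\bot$ correspond.
\item \emph{Infimoids and supremoids.} $[x]\curlywedge[y]=\{[u] : u\in x\curlywedge y\}=\{[u] : u_1\in x_1\curlywedge y_1\}$, which equals $\Phi_{\mathsf A}(x_1\curlywedge y_1)$. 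The same argument works for $\curlyvee$.
\item \emph{Negation.} By Proposition~\ref{swap-SHmbC}(2), $-z=z\multimap\bot$ equals $\{w : w_1\in -z_1\}$. Hence $-[z]=\Phi_{\mathsf A}(-z_1)$.
\end{itemize}
So $\Phi_{\mathsf A}$ preserves every hyperoperation with equality, and the inclusions required for an HBA-morphism hold in both directions.

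For naturality, let $f:\mathsf A_1\to\mathsf A_2$ be an HBA-morphism. Then
\[
\mathsf U^{\mathsf{mbC}}(\mathsf S^{\mathsf{mbC}}(f))(\Phi_{\mathsf A_1}(a))=[(f(a),f(b),f(c))].
\]
This equals $\Phi_{\mathsf A_2}(f(a))$, since the two representatives have the same first coordinate. The independence of $\Phi$ from the choice of $b,c$ is exactly what makes the naturality square commute.

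The main obstacle we expect is the well-definedness of the induced operations on $\mathsf U^{\mathsf{mbC}}$, i.e.\ checking that the HBA structure of Proposition~\ref{functor U-mbC} is really the one described above. The precise point is making sure that $-$ on the quotient is taken as $[z]\multimap\bot$, matching the identification in Proposition~\ref{swap-SHmbC}(2). Once that is pinned down, every step reduces to the first-coordinate characterization of $\sim$.
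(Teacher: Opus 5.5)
Your construction of $\Phi_{\mathsf A}$, the key fact that $z\sim w$ in $\mathsf S^{\mathsf{mbC}}(\mathsf A)$ iff $z_1=w_1$, and your bijectivity and naturality arguments are the same as the paper's. The paper allows any admissible $y,z$ in $(x,y,z)$ rather than fixed $b\in\top$, $c\in\bot$, but this gives the same map. The paper only checks that $\Phi_{\mathsf A}$ preserves $\curlywedge,\curlyvee$ in the inclusion sense and then calls the bijection an isomorphism. Your extra check that $\Phi_{\mathsf A}^{-1}$ is also an HBA-morphism is therefore the right step. Your equalities for the order, $\top$, $\bot$, $\curlywedge$ and $\curlyvee$ are correct.

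The negation step, however, fails as written, and the paper's proof, which never treats negation, has the same gap. Your identity $z\multimap\bot=\{w : w_1\in -z_1\}$ relies on the claim $-a=a\multimap\bot$ made inside the proof of Proposition~\ref{swap-SHmbC}. That claim holds only up to $\equiv$. Since $\mathsf A$ is an IHL (Proposition~\ref{prop:defimp}), condition (I3) makes $a\multimap\bot$ closed under $\equiv$. By contrast, (HBA~1)--(HBA~4) only make $-a$ stable, with $-a\subseteq a\multimap\bot$.

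Here is a counterexample. Take $A=\{0,0',1,1'\}$ with $0\equiv 0'\preceq 1\equiv 1'$, and set $-0=\{1\}$, $-0'=\{1'\}$, $-1=\{0\}$, $-1'=\{0'\}$. This is an HBA, but $0\multimap\bot=\{1,1'\}\neq -0$. Suppose you use either natural negation on the quotient: $\{[u] : u\in z\multimap\bot\}$, or $[z]\multimap\bot$ as you propose. In both cases $-\Phi_{\mathsf A}(0)=\{\Phi_{\mathsf A}(1),\Phi_{\mathsf A}(1')\}\supsetneq\Phi_{\mathsf A}(-0)$.

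So only $\Phi_{\mathsf A}(-a)\subseteq-\Phi_{\mathsf A}(a)$ holds. Thus $\Phi_{\mathsf A}$ is an HBA-morphism, but its inverse violates condition~(5). In this example no HBA-isomorphism can exist at all. An isomorphism $h$ must satisfy $h(-x)=-h(x)$, yet every $-x$ in $\mathsf A$ is a singleton while every $-[z]$ in the quotient has two elements.

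A cleverer argument cannot close the gap; you must adjust the setup, in one of two ways.
\begin{enumerate}
\item Define the negation of $\mathsf S^{\mathsf{mbC}}(\mathsf A)$ directly by $-z:=\{w\in S^{\mathsf{mbC}}_{\mathsf A} : w_1\in -z_1\}$, and on the quotient set $-[z]:=\{[w] : w\in -z\}$. This satisfies (HBA~1)--(HBA~4); for instance, $z_1\in{-}{-}z_1$ gives $z\in{-}{-}z$.
\item Restrict to HBAs whose negation is $\equiv$-closed, as happens when $-x=x\multimap\bot$ in Proposition~\ref{prop:HBA-IHL}.
\end{enumerate}
With either fix, $-[z]=\Phi_{\mathsf A}(-z_1)$ holds literally, and the rest of your proof goes through unchanged.
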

\begin{proof}
    The proof follows the same logical structure as Theorem 5.11 in \cite{con:gol:rob:2026}, but adapted to the context of HBAs.
    
    For all $x\in A$ there exist $y,z\in A$ such that $x\curlyvee y\equiv\top$ and $x\curlywedge y\curlywedge z\equiv\bot$ (for example, take $y\in\neg x$ and any $z\in \bot$). Define $\Phi_{\mathsf A}(x):=[(x,y,z)]$ with $y,z\in A$ such that $x\curlyvee y\equiv\top$ and $x\curlywedge y\curlywedge z\equiv\bot$. The function  $\Phi_{\mathsf A}$ is well-defined: indeed, let $y,y',z,z'\in A$ such that $x\curlyvee y\equiv\top$, $x\curlywedge y\curlywedge z\equiv\bot$, $x\curlyvee y'\equiv\top$ and $x\curlywedge y'\curlywedge z'\equiv\bot$. Then, $(x,y,z),(x,y',z')$ are elements of $S^{\mathsf{mbC}}(\mathsf A)$ such that $(x,y,z)\sim(x,y',z')$ and so $[(x,y,z)]=[(x,y',z')]$. This proves that  $\Phi_{\mathsf A}$ is well-defined. 
    
    Now, let $x,x'\in A$ and $d\in x\curlywedge x'$. Also let $d',d''\in A$ such that $d\curlyvee d'\equiv\top$ and $d\curlywedge d'\curlywedge d''\equiv\bot$. Observe that for all $y,y',z,z'\in A$ with $x\curlyvee y\equiv\top$, $x\curlywedge y\curlywedge z\equiv\bot$, $x'\curlyvee y'\equiv\top$ and $x'\curlywedge y'\curlywedge z'\equiv\bot$, we have $(d,d',d'')\in(x,y,z)\curlywedge(x',y',z')$ in $\mathsf S^{\mathsf{mbC}}(\mathsf A)$, which implies that $[(d,d',d'')]\in[(x,y,z)]\curlywedge[(x',y',z')]$ (in $\mathsf U^{\mathsf{mbC}}(\mathsf S^{\mathsf{mbC}}(\mathsf A))$). Then, we have that $d\in x\curlywedge x'$ implies that $\Phi_A(d)\in \Phi_A(x)\curlywedge \Phi_A(x')$. Similarly, we prove that $d\in x\curlyvee x'$ implies that $\Phi_A(d)\in \Phi_A(x)\curlyvee \Phi_A(x')$,  proving that $\Phi_{\mathsf A}$ is a \textbf{HBA}-morphism. Note that it is a surjective morphism: if $[(x,y,z)]\in \mathsf U^{\mathsf{mbC}}(\mathsf S^{\mathsf{mbC}}(\mathsf A))$, then $[(x,y,z)]=\Phi_{\mathsf A}(x)$.

    Finally, suppose that $\Phi_{\mathsf A}(x)=\Phi_{\mathsf a}(x')$. This means that, for some $y,y',z,z'\in A$ with $x\curlyvee y\equiv\top$, $x\curlywedge y\curlywedge z\equiv\bot$, $x'\curlyvee y'\equiv\top$ and $x'\curlywedge y'\curlywedge z'\equiv\bot$, we have $[(x,y,z)]=[(x',y',z')]$. Then, $(x,y,z)\sim(x',y',z')$, implying that $(x,y,z),(x',y',z')\in\neg w$ (or $(x,y,z),(x',y',z')\in\circ w$) for some $w\in \mathsf S^{\mathsf{mbC}}(\mathsf A)$. In particular $x=x'$, showing that $\Phi_{\mathsf A}$ is injective.

    Therefore $\Phi_{\mathsf L}:\mathsf A\rightarrow \mathsf U^{\mathsf{mbC}}(S^{\mathsf{mbC}}(\mathsf A))$ is an isomorphism which is natural in the sense that for a \textbf{HBA}-morphism $f:\mathsf A_1\rightarrow \mathsf A_2$, the following diagram commutes: 
    $$\xymatrix@!=4.5pc@R=2.5cm @C=3cm{
    \mathsf A_1\ar[r]^{\Phi_{\mathsf A_1}}\ar[d]_{f} & \mathsf U^{\mathsf{mbC}}(\mathsf S^{\mathsf{mbC}}(\mathsf A_1))\ar[d]^{\mathsf U^{\mathsf{mbC}}(\mathsf S^{\mathsf{mbC}}(f))} \\
    \mathsf A_2\ar[r]_{\Phi_{\mathsf A_2}} & \mathsf U^{\mathsf{mbC}}(\mathsf S^{\mathsf{mbC}}(\mathsf A_2))
    }$$
    In fact, for all $x\in A_1$ and all $y,z\in A_1$ with $x\curlyvee y\equiv\top$ and $x\curlywedge y\curlywedge z\equiv\bot$ we have\\
    
    $\begin{array}{ll}
         \mathsf U^{\mathsf{mbC}}(\mathsf S^{\mathsf{mbC}}(f))(\Phi_{\mathsf A_1}(x))&=\mathsf U^{\mathsf{mbC}}(\mathsf S^{\mathsf{mbC}}(f))([(x,y,z)])  \\
         &=[(f(x),f(y),f(z))] =\Phi_{\mathsf A_2}(f(x)).
    \end{array}$\\
  
\noindent    Then $\Phi:\mathsf A\rightarrow\Phi_{\mathsf A}$ is a natural isomorphism that witnesses the isomorphism of functors $$\Phi:1_{\textbf{HBA}}\cong \mathsf U^{\mathsf{mbC}}\circ \mathsf S^{\mathsf{mbC}}.$$
\end{proof}

\begin{theorem}\label{equivalence-02-mbC}
    For all $\mathsf A\in \textbf{EHmbCA}$ there is an isomorphism $\Psi_{\mathsf A}:\mathsf A\rightarrow \mathsf S^{\mathsf{mbC}}(\mathsf U^{\mathsf{mbC}}(\mathsf A))$. Moreover, this provides a natural isomorphism of functors $\Psi:1_{\textbf{EHmbCA}}\Rightarrow \mathsf S^{\mathsf{mbC}}\circ \mathsf U^{\mathsf{mbC}}$ given by $\mathsf A\mapsto\Psi_{\mathsf A}$. 
\end{theorem}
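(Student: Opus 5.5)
The plan is to define, for an EHmbCA $\mathsf A$, the map $\Psi_{\mathsf A}(x):=([x],[y],[z])$, where $y\in\neg x$ and $z\in\circ x$ are arbitrary. By E1 and the definition of $\sim$ in E2, any two elements of $\neg x$ are $\sim$-related, and likewise for $\circ x$. Hence $[y]$ and $[z]$ do not depend on the choices made, and we may write $\Psi_{\mathsf A}(x)=([x],[\neg x],[\circ x])$.

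\textbf{Well-definedness.} We must check that this triple lies in $S^{\mathsf{mbC}}(\mathsf U^{\mathsf{mbC}}(\mathsf A))$. The order and the hyperoperations on $\mathsf U^{\mathsf{mbC}}(\mathsf A)$ are computed on representatives (Proposition~\ref{functor U-mbC}). So the conditions $[x]\curlyvee[y]\equiv\top$ and $[x]\curlywedge[y]\curlywedge[z]\equiv\bot$ follow from (EM) and (BC) in $\mathsf A$. For this we need the observation that $\top$ (resp.\ $\bot$) in $\mathsf U^{\mathsf{mbC}}(\mathsf A)$ consists of the classes of elements of $\top$ (resp.\ $\bot$) in $\mathsf A$, which is immediate from the definition of the quotient order.

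\textbf{Bijectivity.} Injectivity is exactly E4. If $\Psi_{\mathsf A}(x)=\Psi_{\mathsf A}(x')$, then $x\sim x'$, $\neg x\sim\neg x'$ and $\circ x\sim\circ x'$, hence $x=x'$. For surjectivity, take $([a],[b],[c])\in S^{\mathsf{mbC}}(\mathsf U^{\mathsf{mbC}}(\mathsf A))$. Transferring back along representatives gives $a\curlyvee b\equiv\top$ and $a\curlywedge b\curlywedge c\equiv\bot$ in $\mathsf A$. Then E3 yields $w$ with $a\sim w$, $b\sim\neg w$ and $c\sim\circ w$, so $\Psi_{\mathsf A}(w)=([a],[b],[c])$.

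\textbf{Morphism in both directions.} We show that $\Psi_{\mathsf A}$ and its inverse preserve the hyperoperations.
\begin{itemize}
\item For the lattice operations: if $d\in x\curlywedge x'$, then $[d]\in[x]\curlywedge[x']$, so $\Psi_{\mathsf A}(d)\in\Psi_{\mathsf A}(x)\curlywedge\Psi_{\mathsf A}(x')$, since the swap-structure infimoid only constrains the first coordinate. The cases of $\curlyvee$, $-$ (equivalently $\multimap$), $\top$ and $\bot$ are handled in the same way.
\item For negation: if $y\in\neg x$, the first coordinate of $\Psi_{\mathsf A}(y)$ is $[y]=[\neg x]$, which is the second coordinate of $\Psi_{\mathsf A}(x)$, so $\Psi_{\mathsf A}(y)\in\neg\Psi_{\mathsf A}(x)$. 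The same argument works for $\circ$.
\item For the inverse on $\neg$ and $\circ$: suppose $\Psi_{\mathsf A}(y)\in\neg\Psi_{\mathsf A}(x)$, i.e.\ $[y]=[\neg x]$. Take $y'\in\neg x$, so $y\sim y'$. By E0, $x\in\neg\neg x\subseteq\neg y'$. I expect to need the transitivity in E2 together with E4 to conclude $y\in\neg x$: one shows that $\neg x$ is a full $\sim$-class, using E3 and E4 to produce the needed element and identify it with $y$. This is the step I expect to be the main obstacle, since all the other data only determine elements up to $\sim$.
\item For the inverse on the lattice operations: we use that $\curlywedge$ in $\mathsf A$ is a Morgado infimoid, which is closed under $\equiv$. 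Since $\sim$ implies $\equiv$, membership in $\curlywedge$ is determined by the quotient data.
\end{itemize}

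\textbf{Naturality.} For an EHmbCA-morphism $f$, we compute
\[
\mathsf S^{\mathsf{mbC}}(\mathsf U^{\mathsf{mbC}}(f))(\Psi_{\mathsf A_1}(x))=([f(x)],[f(y)],[f(z)]).
\]
Since $f(y)\in\neg f(x)$ and $f(z)\in\circ f(x)$, this equals $\Psi_{\mathsf A_2}(f(x))$. The naturality square therefore commutes, and $\Psi$ is a natural isomorphism $1_{\textbf{EHmbCA}}\Rightarrow\mathsf S^{\mathsf{mbC}}\circ\mathsf U^{\mathsf{mbC}}$, mirroring the proof of Theorem~\ref{equivalence-01-mbC}.
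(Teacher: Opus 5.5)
Up to the inverse direction, your argument is the paper's. You use the same $\Psi_{\mathsf A}$, well-definedness from (EM)/(BC), injectivity from E4, surjectivity from E3, forward preservation of the hyperoperations, and the same naturality square. The paper stops there and calls the bijective morphism an isomorphism. You correctly notice that for hyperalgebras the inverse must also be a morphism. However, the step you flag cannot be carried out, because E0--E4 do not force $\neg x$ to be a full $\sim$-class. Your inclusion $\neg\neg x\subseteq\neg y'$ is also backwards: $\neg\neg x=\bigcup_{u\in\neg x}\neg u\supseteq\neg y'$, and E0 only gives $x\in\neg u$ for \emph{some} $u\in\neg x$. Already in $\mathsf S^{\mathsf{mbC}}(\mathbf 2)$ one has $(1,0,1)\in\neg(1,1,0)$ but $(1,1,0)\notin\neg(1,0,1)$.

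\textbf{Counterexample.} Let $\mathsf A$ be $\mathsf S^{\mathsf{mbC}}(\mathbf 2)$, whose elements are $(1,0,1),(1,0,0),(1,1,0),(0,1,0),(0,1,1)$, with a single change: set $\neg(1,0,1):=\{(0,1,0)\}$ instead of $\{(0,1,0),(0,1,1)\}$. This is still an EHmbCA:
\begin{enumerate}
\item The new value is a nonempty subset of the old one, so (EM), (BC) and E1 persist.
\item E0 can be rechecked directly; for instance $\neg\neg(1,0,1)=\neg(0,1,0)\ni(1,0,1)$.
\item The relation $\sim$ is still ``same first coordinate'', because $\circ(1,0,1)$ and $\circ(1,0,0)$ are already the two classes. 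Hence E2 holds.
\item E3 and E4 survive, since conditions such as $y\sim\neg w$ or $\neg x\sim\neg y$ only depend on the $\sim$-class containing $\neg w$, which has not changed.
\end{enumerate}
Now $\mathsf U^{\mathsf{mbC}}(\mathsf A)\cong\mathbf 2$, and $\Psi_{\mathsf A}$ is the identity on triples. It is a bijective morphism. But $(0,1,1)\in\neg\Psi_{\mathsf A}(1,0,1)$, whereas $(0,1,1)\notin\neg(1,0,1)$ in $\mathsf A$, so the inverse does not preserve $\neg$.

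In fact no isomorphism $\mathsf A\cong\mathsf S^{\mathsf{mbC}}(\mathsf U^{\mathsf{mbC}}(\mathsf A))$ exists at all. A morphism $g$ whose inverse is also a morphism must satisfy $g(\neg x)=\neg g(x)$. Yet $|\neg(1,0,1)|=1$ in $\mathsf A$, while every $\neg$-value in $\mathsf S^{\mathsf{mbC}}(\mathbf 2)$ has $2$ or $3$ elements.

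The same construction works with $\circ$. It also works with $-$, since negation values in an HBA need not be $\equiv$-closed. This contrasts with $\curlywedge,\curlyvee,\top,\bot$, which your $\equiv$-closure argument does handle correctly.

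So your argument, like the paper's, yields only a natural bijective morphism $1_{\textbf{EHmbCA}}\Rightarrow\mathsf S^{\mathsf{mbC}}\circ\mathsf U^{\mathsf{mbC}}$. To obtain an isomorphism you must add an axiom such as ``$y\sim y'\in\neg x$ implies $y\in\neg x$'', the analogue for $\circ$, and $\equiv$-closure of $-x$. With that axiom your inverse argument goes through immediately.
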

\begin{proof}
    Given $x \in A$ , if $y,y'\in\neg x$ and $z,z' \in \circ x$ then, by definition, $y \sim y'$ and $z \sim z'$, therefore $[y]=[y']$ and $[z]=[z']$. Moreover, $x \curlyvee y \equiv \top$ and $x\curlywedge y\curlywedge z\equiv\bot$, which imply $[x] \curlyvee_{\mathsf U^{\mathsf{mbC}}(\mathsf A)} [y] \equiv \top_{\mathsf U^{\mathsf{mbC}}(\mathsf A)}$ and $[x]\curlywedge_{\mathsf U^{\mathsf{mbC}}(\mathsf A)} [y]\curlywedge_{\mathsf U^{\mathsf{mbC}}(\mathsf A)} [z]\equiv\bot_{\mathsf U^{\mathsf{mbC}}(\mathsf A)}$, showing that $([x],[y],[z]) \in S^{\mathsf{mbC}}(\mathsf U^{\mathsf{mbC}}(\mathsf A))$, the domain of $\mathsf S^{\mathsf{mbC}}(\mathsf U^{\mathsf{mbC}}(\mathsf A))$. With these considerations, we have a well-defined function $\Psi_{\mathsf A}:A\rightarrow \mathsf S^{\mathsf{mbC}}(\mathsf U^{\mathsf{mbC}}(\mathsf A))$ given, for $x\in A$, by the rule $\Psi_{\mathsf A}(x):=([x],[y],[z])$, where $y\in\neg x$ and $z\in\circ x$ are arbitrary.
    
    To prove that $\Psi_{\mathsf A}$ is an \textbf{EHmbCA}-morphism, let $x,x'\in A$ and $d\in x\curlywedge x'$. Also let $d',d''\in A$ such that $d\curlyvee d'\equiv\top$ and $d\curlywedge d'\curlywedge d''\equiv\bot$. Observe that for all $y,y',z,z'\in A$ with $x\curlyvee y\equiv\top$, $x\curlywedge y\curlywedge z\equiv\bot$, $x'\curlyvee y'\equiv\top$ and $x'\curlywedge y'\curlywedge z'\equiv\bot$, we have $([d],[d'],[d''])\in([x],[y],[z])\curlywedge([x'],[y'],[z'])$ in $\mathsf S^{\mathsf{mbC}}(\mathsf U^{\mathsf{mbC}}(\mathsf A))$, which means that $\Psi_{\mathsf A}(d)\in \Psi_{\mathsf A}(x)\curlywedge \Psi_{\mathsf A}(x')$. Similarly we prove that $d\in x\curlyvee x'$ implies that $\Psi_{\mathsf A}(d)\in \Psi_{\mathsf A}(x)\curlyvee \Psi_{\mathsf A}(x')$, proving that $\Psi_{\mathsf A}$ is a \textbf{HBA}-morphism. The next step is to prove that $\Psi_{\mathsf A}(\neg x)\subseteq\neg\Psi_{\mathsf A}(x)$ and $\Psi_{\mathsf A}(\circ x)\subseteq\circ\Psi_{\mathsf A}(x)$. Observe first that  

    $\begin{array}{lll}
         \neg\Psi_{\mathsf A}(x)&=&\neg ([x],[y],[z])\\[1mm]
         &=& \big\{([y],[w_2],[w_3]\in S^{\mathsf{mbC}}(\mathsf U^{\mathsf{mbC}}(\mathsf A)) \ : \ [w_2],[w_3]\in\mathsf U^{\mathsf{mbC}}(\mathsf A)\big\}\\[1mm]
         &=& \big\{([y],[w_2],[w_3]\in S^{\mathsf{mbC}}(\mathsf U^{\mathsf{mbC}}(\mathsf A)) \ :\\[1mm]
         && \hspace{3cm} y, w_2,w_3\in\mathsf A\mbox{ with }y\in\neg x\big\}.    
    \end{array}$
    
    Now, let $z\in\neg x$. Then by definition, $\Psi_A(z)=([z],[z'],[z''])$ with $z'\in\neg z$ and $z''\in\circ z$. Then\\
    
    $\begin{array}{lll}
        \Psi_A(z)&=&([z],[z'],[z''])\in\big\{([y],[w_2],[w_3]\in S^{\mathsf{mbC}}(\mathsf U^{\mathsf{mbC}}(\mathsf A)) \ :\\[1mm]
         && \hspace{4cm} y, w_2,w_3\in\mathsf A\mbox{ with }y\in\neg x\big\}\\[1mm]
         &=& \neg\Psi_{\mathsf A}(x),
    \end{array}$\\
    
   \noindent proving that $\Psi_A(\neg x)\subseteq\neg\Psi_A(x)$. Similarly we get $\Psi_A(\circ x)\subseteq\circ\Psi_A(x)$ proving that $\Psi_{\mathsf A}:\mathsf A\rightarrow \mathsf S^{\mathsf{mbC}}(\mathsf U^{\mathsf{mbC}}(\mathsf A))$ is an \textbf{EHmbCA}-morphism.

    If $\Psi_{\mathsf A}(x)=\Psi_{\mathsf A}(y)$ then $([x],[x'],[x''])=([y],[y'],[y''])$ for all $x'\in\neg x$, $x''\in\circ x$, and all $y'\in\neg y$, $y''\in\circ y$, which means $x\sim y$, $\neg x\sim \neg y$ and $\circ x\sim \circ y$. By Axiom E4 we have $x=y$ and so $\Psi_{\mathsf A}$ is injective. Also, if $([x],[y],[z])\in S^{\mathsf{mbC}}(\mathsf U^{\mathsf{mbC}}(\mathsf A))$ then $[x]\curlyvee_{\mathsf U(\mathsf A)} [y] \equiv \top_{\mathsf U(\mathsf A)}$ and $[x]\curlywedge_{\mathsf U(\mathsf A)} [y]\curlywedge_{\mathsf U(\mathsf A)}[z] \equiv \bot_{\mathsf U(\mathsf A)}$, which implies that $x\curlyvee y\equiv\top$ and $x\curlywedge y\curlywedge z\equiv\bot$. By Axiom E3 there exist $w\in A$ such that $x\sim w$, $y\sim \neg w$ and $z\sim\circ w$. Therefore, given $w'\in\neg w$ and $w''\in\circ w$ we get $\Psi_{\mathsf A}(w)=([w],[w'],[w''])=([x],[y],[z])$ and so $\Psi_{\mathsf A}$ is surjective.
    
    Therefore $\Psi_{\mathsf A}:\mathsf A\rightarrow \mathsf S^{\mathsf{mbC}}(\mathsf U^{\mathsf{mbC}}(\mathsf A))$ is an isomorphism which is natural in the sense that for an \textbf{EHmbCA}-morphism $g:\mathsf A_1\rightarrow \mathsf A_2$, the following diagram commutes: 
    $$\xymatrix@!=4.5pc@R=2.5cm @C=3cm{
    \mathsf A_1\ar[r]^{\Psi_{\mathsf A_1}}\ar[d]_{g} & \mathsf S^{\mathsf{mbC}}(\mathsf U^{\mathsf{mbC}}(\mathsf A_1))\ar[d]^{\mathsf S^{\mathsf{mbC}}(\mathsf U^{\mathsf{mbC}}(g))} \\
    \mathsf A_2\ar[r]_{\Psi_{\mathsf A_2}} & \mathsf S^{\mathsf{mbC}}(\mathsf U^{\mathsf{mbC}}(\mathsf A_2))
    }$$
    In fact, for all $x\in A_1$, $y\in\neg x$ and $z\in\circ x$ we have
    \begin{align*}
        \mathsf S^{\mathsf{mbC}}(\mathsf U^{\mathsf{mbC}}(g))(\Psi_{\mathsf A_1}(x))&=\mathsf S^{\mathsf{mbC}}(\mathsf U^{\mathsf{mbC}}(g))([x],[y],[z])\\
        &=(\mathsf U^{\mathsf{mbC}}(g)([x]),\mathsf U^{\mathsf{mbC}}(g)([y]),\mathsf U^{\mathsf{mbC}}(g)([z]))\\
        &=([g(x)],[g(y)],[g(z)])=\Psi_{A_2}(g(x)).
    \end{align*}
    Then $\Psi:\mathsf A\rightarrow\Psi_{\mathsf A}$ is a natural isomorphism that witnessess the isomorphism of functors $$\Psi:1_{\textbf{EHmbCA}}\cong \mathsf S^{\mathsf{mbC}}\circ \mathsf U^{\mathsf{mbC}}.$$
\end{proof}

Combining Theorems~\ref{equivalence-01-mbC} and~\ref{equivalence-02-mbC} we arrive at the main result in this section:

\begin{theorem}\label{equivalence-03-mbC}
    The functors $\mathsf S^{\mathsf{mbC}}:\textbf{HBA}\rightarrow \textbf{EHmbCA}$ and $\mathsf U^{\mathsf{mbC}}:\textbf{EHmbCA}\rightarrow \textbf{HBA}$ establish an equivalence of categories.
\end{theorem}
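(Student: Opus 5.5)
The plan is to invoke the standard characterization of an equivalence of categories: a pair of functors $F:\mathcal C\to\mathcal D$ and $G:\mathcal D\to\mathcal C$ together with natural isomorphisms $1_{\mathcal C}\cong G\circ F$ and $1_{\mathcal D}\cong F\circ G$. Here $F=\mathsf S^{\mathsf{mbC}}$ and $G=\mathsf U^{\mathsf{mbC}}$.

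First, I would check that both functors land in the right categories. By the Proposition stating that $\mathsf S^{\mathsf{mbC}}(\mathsf A)$ is an EHmbCA, together with the Kalman functor theorem, $\mathsf S^{\mathsf{mbC}}$ corestricts to a functor $\textbf{HBA}\to\textbf{EHmbCA}$. This uses that \textbf{EHmbCA} is a full subcategory of \textbf{HmbCA}, so each $\mathsf S^{\mathsf{mbC}}(f)$ is automatically a morphism there. By Proposition~\ref{functor U-mbC}, $\mathsf U^{\mathsf{mbC}}$ is a functor $\textbf{EHmbCA}\to\textbf{HBA}$.

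Next, I would take the two natural isomorphisms from earlier results. Theorem~\ref{equivalence-01-mbC} supplies $\Phi:1_{\textbf{HBA}}\cong \mathsf U^{\mathsf{mbC}}\circ\mathsf S^{\mathsf{mbC}}$, and Theorem~\ref{equivalence-02-mbC} supplies $\Psi:1_{\textbf{EHmbCA}}\cong \mathsf S^{\mathsf{mbC}}\circ\mathsf U^{\mathsf{mbC}}$. Together these give exactly the data of an equivalence, and the theorem follows. If one wants an adjoint equivalence, one can modify $\Psi$ in the usual way so that the triangle identities hold, but this is not needed for the statement.

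The main obstacle is confirming that each component $\Phi_{\mathsf A}$ and $\Psi_{\mathsf A}$ is an isomorphism in the relevant category, that is, that its set-theoretic inverse is again a morphism. A bijective morphism of hyperalgebras need not have a morphism as its inverse. I would therefore verify that the inverses preserve the hyperoperations. For $\Phi_{\mathsf A}^{-1}$, send $[(x,y,z)]$ to $x$. If $[(d,\ldots)]\in[(x,\ldots)]\curlywedge[(x',\ldots)]$, then the definition of the hyperoperations in $\mathsf U^{\mathsf{mbC}}$ and in the swap structure gives a representative whose first coordinate lies in $x\curlywedge x'$. Because $\sim$ on swap structures is equality of first coordinates, this yields $d\in x\curlywedge x'$. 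The same argument handles $\curlyvee$, $-$, $\top$ and $\bot$.

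For $\Psi_{\mathsf A}^{-1}$, I would argue in the same way, using E1--E4. Suppose $\Psi_{\mathsf A}(d)\in\neg\Psi_{\mathsf A}(x)$. Then $[d]=[y]$ for some $y\in\neg x$, so $d\sim y$. I would then need to show $d\in\neg x$, and the natural route is through the transitivity in E2, using that $\sim$-classes coincide with the sets $\neg z$ and $\circ z$. This step might require a careful reading of E2, and it is where I expect the real work to lie.
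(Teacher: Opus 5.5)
\textbf{Where your proposal matches the paper.} Your top-level argument is exactly the paper's, which proves this theorem simply by combining Theorems~\ref{equivalence-01-mbC} and~\ref{equivalence-02-mbC}. You are right that this requires $\Phi_{\mathsf A}$ and $\Psi_{\mathsf A}$ to be isomorphisms in the categorical sense, with inverses that are again morphisms. The paper's proofs of those two theorems only exhibit bijective morphisms.

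Your check for $\Phi_{\mathsf A}^{-1}$ is fine. On a swap structure all operations depend only on first coordinates (taking $-z=\{u : u_1\in -z_1\}$), and $\sim$ is equality of first coordinates. For $\Psi_{\mathsf A}^{-1}$, the cases $\curlywedge,\curlyvee,\top,\bot$ also work, because those sets are closed under $\equiv$ and $\sim$ is contained in $\equiv$ by E1.

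\textbf{The gap.} It is precisely the step you flag: from $d\sim y$ and $y\in\neg x$ you cannot get $d\in\neg x$. Axiom E2 only says that $\sim$ is transitive. Nothing in E0--E4 says that $\neg x$ or $\circ x$ is a whole $\sim$-class. This is not just unproved but false, so the statement cannot be established from the definitions as given.

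\textbf{Counterexample.} Take the two-element Boolean algebra $\{0,1\}$.
Its swap structure has domain $\{(1,0,0),(1,0,1),(1,1,0),(0,1,0),(0,1,1)\}$ and $\sim$-classes $T=\{z : z_1=1\}$ and $F=\{z : z_1=0\}$; every $\neg z$ and $\circ z$ equals $T$ or $F$. Let $\mathsf A'$ be the same hyperalgebra except that $\neg(1,0,0):=\{(0,1,0)\}$. Then:
(EM), (BC) and E1 survive, since a set was only shrunk;
E0 still holds, e.g.\ $(1,0,0)\in\neg(0,1,0)=T$, and $(0,1,1)\in\neg(1,0,1)$ with $(1,0,1)\in\neg(0,1,1)$;
$\sim$ is still $T\times T\cup F\times F$, so E2 holds;
E3 and E4 only see which $\sim$-class contains each $\neg w$, $\circ w$, and that is unchanged.

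So $\mathsf A'$ is an EHmbCA. Under $\mathsf U^{\mathsf{mbC}}(\mathsf A')\cong\{0,1\}$ (with $T\mapsto 1$, $F\mapsto 0$), the map $\Psi_{\mathsf A'}$ is the identity. Its inverse does not preserve $\neg$: $(0,1,1)\in\neg(1,0,0)$ in the swap structure but not in $\mathsf A'$.

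In fact no isomorphism $f:\mathsf A'\to\mathsf S^{\mathsf{mbC}}(\mathsf B)$ exists for any HBA $\mathsf B$. An isomorphism satisfies $f(\neg x)=\neg f(x)$, and hence preserves and reflects $\sim$. In every hyper swap structure each $\neg z=\{u : u_1=z_2\}$ is a full $\sim$-class, whereas $\neg(1,0,0)$ is not one in $\mathsf A'$. Hence $\mathsf S^{\mathsf{mbC}}$ is not essentially surjective onto \textbf{EHmbCA}.

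\textbf{What is missing.} You need an extra axiom in Definition~\ref{enriched-mbc}: $\neg x$ and $\circ x$ are closed under $\sim$. With it, your step is immediate, since $[d]=[y]$ and $y\in\neg x$ give $d\in\neg x$.

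The HBA operation $-$, which E0--E4 do not mention at all, needs the same treatment. You must also require that $-x$ is $\sim$-closed and that $-x=-x'$ whenever $x\sim x'$. The latter condition is also what makes $-[x]:=\{[e] : e\in -x\}$ well defined on $\mathsf U^{\mathsf{mbC}}(\mathsf A)$.
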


The latter result shows that any  enriched hyper mbC algebra has a representation as a swap structure over a hyper Boolean algebra.

\section{Extensions of these results for others LFIs}\label{sec7}

This section demonstrates the versatility of our hyperalgebraic framework by extending the results obtained for mbC to a wider class of Logics of Formal Inconsistency (LFIs), including mbCciw, mbCci, and Ci. Just as we adapted the swap structures for extensions of $C_\omega$ (such as $C_{min}$ and $C_\omega^+$) in~\cite{con:gol:rob:2026}, here we show that the category of Hyper Boolean Algebras can be naturally restricted to subcategories corresponding to these axiomatic extensions. By imposing additional algebraic conditions that mirror the logical axioms---such as those relating the consistency operator $\circ$ to non-contradiction---we derive specific classes of hyperalgebras and their corresponding hyper swap structures. This establishes a comprehensive algebraic semantics for the hierarchy of LFIs described in~\cite{carnielli2016paraconsistent}, confirming that the equivalence of categories is preserved across these systems.

\subsection{ Axiomatic Extensions for LFIs}

Before defining the algebraic counterparts, we briefly recall the Hilbert calculi for the relevant extensions of mbC. Following the taxonomy presented in~\cite[Chapter 3]{carnielli2016paraconsistent}, we focus on systems that strengthen the consistency operator $\circ$ or the paraconsistent negation $\neg$, specifically mbCciw, mbCci, Ci, Cie, and Cia.

\begin{definition}
    The logic mbCciw is obtained from mbC by adding the axiom schema
    \begin{align*}
        \tag{ciw}\circ\alpha\vee(\alpha\wedge\neg\alpha)
    \end{align*}
\end{definition}

\begin{definition}
    The logic mbCci is obtained from mbC by adding the axiom schema
    \begin{align*}
        \tag{ci}\neg{\circ}\alpha\rightarrow(\alpha\wedge\neg\alpha)
    \end{align*}
\end{definition}

\begin{definition}
    The logic Ci is obtained from mbCci by adding the axiom schema
    \begin{align*}
        \tag{cf}\neg\neg\alpha\rightarrow\alpha
    \end{align*}
\end{definition}

\begin{definition}
    The logic Cie is obtained from Ci by adding the axiom schema
    \begin{align*}
        \tag{Cie}\alpha\rightarrow\neg\neg\alpha
    \end{align*}
\end{definition}

\begin{definition}
    The logic Cia is obtained from Ci by adding the axiom schemas, for $\#\in\{\wedge,\vee,\rightarrow\}$:
    \begin{align*}
        \tag{Cia$_\#$}(\circ\alpha\wedge\circ\beta)\rightarrow\circ(\alpha\#\beta)
    \end{align*}
\end{definition}

\subsection{The Hyper Algebras and Hyper Swaps}

In this subsection, we introduce the classes of hyperalgebras corresponding to the logics defined above. By imposing additional algebraic restrictions on the \emph{Hyper mbC Algebras} (HmbCAs) defined in Section~\ref{sec5}, we obtain specific structures such as \emph{Hyper mbCciw Algebras} and \emph{Hyper Ci Algebras}. Mirroring the approach taken for $C_{min}$ in~\cite{con:gol:rob:2026}, each logical axiom is translated into a corresponding condition on the hyperoperations, ensuring that the resulting Hyper Swap Structures $S^{\mathcal{L}}(B)$ naturally inhabit these subcategories.

\begin{definition}\label{hyper-mbCciw-algebra}
    A {\em hyperalgebra for mbCciw} (or {\em hyper  mbCciw algebra}, or simply a HmbCciwA) is a HmbCA such that for every $x\in A$:

\begin{description}
    \item[(ciw)] $(x\curlywedge y)\curlyvee z\equiv\top$ for all $y \in \neg x$ and all $z\in\circ x$.
\end{description}
\noindent It is immediate to see that condition (ciw) can be written in a concise way as follows:

\begin{description}
    \item[(ciw')] $(x\curlywedge \neg x)\curlyvee\circ x\equiv\top$ for all $x\in A$;
\end{description}

or equivalently,

\begin{description}
    \item[(ciw'')] $\circ x\equiv -(x\curlywedge \neg x)$ for all $x\in A$.
\end{description}
\end{definition}

\begin{definition} [Hyper Swap structures for mbCciw] \label{def-hyper-swap-mbcciw} \ \\
Let $\mathsf B=\langle B, \curlywedge,\curlyvee,-, \top,\bot\rangle$ be a HBA. Let 
$$S^{\mathsf{mbCciw}}_{\mathsf B}=\{z \in B \times B\times B \ : \ z_1 \curlyvee z_2\equiv\top\mbox{ and } z_3\equiv -(z_1\curlywedge\neg z_2)\}.$$
The {\em hyper swap structure} for $mbCciw$ over $\mathsf B$ is the hyperalgebra $\mathsf S^{\mathsf{mbCciw}}(\mathsf B)=\langle S^{\mathsf{mbCciw}}_{\mathsf B}, \curlywedge,\curlyvee,\multimap,\neg,\circ\rangle$ over the signature $\Sigma$ such that the hyperoperators are defined in Definition \ref{def-hyper-swap-mbc}.\footnote{By simplicity, the hyperoperators and the induced preordering in $\mathsf S^{\mathsf{mbCciw}}(\mathsf B)$ will be denoted by using the same symbols than in $\mathsf B$. The context will avoid any confusion. Also, we will do this simplification in all the hyper swap for the others LFIs.} We may denote $S^{\mathsf{mbCciw}}(B)$ simply by  $S^{\mathsf{mbCciw}}_{\mathsf B}$.
\end{definition}

For the convenience of the reader, we recall here the more succint description of hyperoperations in  $S^{\mathsf{mbCciw}}_{\mathsf B}$ (which is the same as in $S^{\mathsf{mbC}}_{\mathsf B}$):
\begin{align*}
    z\curlywedge w&=(z_1\curlywedge w_1, \, \_,\, \_)\\
    z\curlyvee w&=(z_1\curlyvee w_1, \, \_,\, \_)\\
    z\multimap w&=(z_1\multimap w_1, \, \_,\, \_)\\
    \neg z&=(z_2, \, \_,\, \_)\\
    \circ z&=(z_3, \, \_,\, \_)
\end{align*}

\begin{definition}\label{hyper-mbcci-algebra}
    A {\em hyperalgebra for mbCci} (or {\em hyper  mbCci algebra}, or simply a HmbCciA) is a HmbCciwA such that for every $x\in A$:

\begin{description}
    \item[(ci)] $\neg{\circ} x\equiv x\curlywedge\neg x$ for all $x\in A$.
\end{description}
\end{definition}

\begin{definition} [Hyper Swap structures for mbCci] \label{def-hyper-swap-mbcci}
Let $\mathsf B=\langle B, \curlywedge,\curlyvee,-, \top,\bot\rangle$ be a HBA. Let 
$$S^{\mathsf{mbCci}}_{\mathsf B}=\{z \in B \times B\times B \ : \ z_1 \curlyvee z_2\equiv\top\mbox{ and } z_3\equiv -(z_1\curlywedge\neg z_2)\}.$$
The {\em hyper swap structure} for $mbCci$ over $\mathsf B$ is the hyperalgebra $\mathsf S^{\mathsf{mbCci}}(\mathsf B)=\langle S^{\mathsf{mbCci}}_{\mathsf B}, \curlywedge,\curlyvee,\multimap,\neg,\circ\rangle$ over the signature $\Sigma$ such that the hyperoperators $\curlywedge,\curlyvee,\multimap$ and $\neg$ are defined in the same way as in Definition \ref{def-hyper-swap-mbc}. The $\circ$ in $\mathsf S^{\mathsf{mbCci}}(\mathsf B)$ is defined by:
\begin{align*}
    \circ z&:=\{u\in S^{\mathsf{mbCci}}_{\mathsf B} \ : \ u_1=z_3\mbox{ and }u_2\equiv z_1\curlywedge z_2\}.
\end{align*}
 We may denote $S^{\mathsf{mbCci}}(B)$ simply by  $S^{\mathsf{mbCci}}_{\mathsf B}$.
\end{definition}

\begin{definition}\label{hyper-Ci-algebra}
    A {\em hyperalgebra for Ci} (or {\em hyper Ci algebra}, or simply a HCiA) is a HmbCciA such that for every $x\in A$:

\begin{description}
    \item[(cf)] $\neg\neg x\preceq x$.
\end{description} 
\end{definition}

\begin{definition} [Hyper Swap structures for Ci] \label{def-hyper-swap-ci}
Let $\mathsf B=\langle B, \curlywedge,\curlyvee,-, \top,\bot\rangle$ be a HBA. Let 
$$S^{\mathsf{Ci}}_{\mathsf B}=\{z \in B \times B\times B \ : \ z_1 \curlyvee z_2\equiv\top\mbox{ and } z_3\equiv -(z_1\curlywedge\neg z_2)\}.$$
The {\em hyper swap structure} for $Ci$ over $\mathsf B$ is the hyperalgebra $\mathsf S^{\mathsf{Ci}}(\mathsf B)=\langle S^{\mathsf{Ci}}_{\mathsf B}, \curlywedge,\curlyvee,\multimap,\neg,\circ\rangle$ over the signature $\Sigma$ such that the hyperoperators $\curlywedge,\curlyvee$ and $\multimap$ are defined in the same way as in Definition \ref{def-hyper-swap-mbc}. The hyperoperators $\neg$ and $\circ$ are defined by:
\begin{align*}
    \neg z&:=\{u\in S^{\mathsf{Ci}}_{\mathsf B} \ : \ u_1=z_2\mbox{ and }u_2\preceq z_1\}\\
    \circ z&:=\{u\in S^{\mathsf{Ci}}_{\mathsf B} \ : \ u_1=z_3\mbox{ and }u_2\equiv z_1\curlywedge z_2\}.
\end{align*}
 We may denote $S^{\mathsf{Ci}}(B)$ simply by  $S^{\mathsf{Ci}}_{\mathsf B}$.
\end{definition}

\begin{definition}\label{hyper-Cie-algebra}
    A {\em hyperalgebra for Cie} (or {\em hyper Cie algebra}, or simply a HCieA) is a HCiA such that for every $x\in A$:

\begin{description}
    \item[(Cie)] $\neg\neg x\equiv x$.
\end{description} 
\end{definition}

\begin{definition} [Hyper Swap structures for Cie] \label{def-hyper-swap-cie}
Let $\mathsf B=\langle B, \curlywedge,\curlyvee,-, \top,\bot\rangle$ be a HBA. Let 
$$S^{\mathsf{Cie}}_{\mathsf B}=\{z \in B \times B\times B \ : \ z_1 \curlyvee z_2\equiv\top\mbox{ and } z_3\equiv -(z_1\curlywedge\neg z_2)\}.$$
The {\em hyper swap structure} for $Cie$ over $\mathsf B$ is the hyperalgebra $\mathsf S^{\mathsf{Cie}}(\mathsf B)=\langle S^{\mathsf{Cie}}_{\mathsf B}, \curlywedge,\curlyvee,\multimap,\neg,\circ\rangle$ over the signature $\Sigma$ such that the hyperoperators $\curlywedge,\curlyvee$ and $\multimap$ are defined in the same way as in Definition \ref{def-hyper-swap-mbc}. The hyperoperators $\neg$ and $\circ$ are defined by:
\begin{align*}
    \neg z&:=\{u\in S^{\mathsf{Cie}}_{\mathsf B} \ : \ u_1=z_2\mbox{ and }u_2\equiv z_1\}\\
    \circ z&:=\{u\in S^{\mathsf{Cie}}_{\mathsf B} \ : \ u_1=z_3\mbox{ and }u_2\equiv z_1\curlywedge z_2\}.
\end{align*}
 We may denote $S^{\mathsf{Cie}}(B)$ simply by  $S^{\mathsf{Cie}}_{\mathsf B}$.
\end{definition}

\begin{definition}\label{hyper-Cia-algebra}
    A {\em hyperalgebra for Cia} (or {\em hyper Cia algebra}, or simply a HCiaA) is a HCiA such that for every $x,y\in A$:

\begin{description}
    \item[(Cia$_\wedge$)] $\circ x\curlywedge\circ y\preceq\circ(x\curlywedge y)$.
    \item[(Cia$_\vee$)] $\circ x\curlywedge\circ y\preceq\circ(x\curlyvee y)$.
    \item[(Cia$_\rightarrow$)] $\circ x\curlywedge\circ y\preceq\circ(x\multimap y)$.
\end{description}  
\end{definition}

\begin{definition} [Hyper Swap structures for Cia] \label{def-hyper-swap-cia}
Let $\mathsf B=\langle B, \curlywedge,\curlyvee,-, \top,\bot\rangle$ be a HBA. Let 
$$S^{\mathsf{Cia}}_{\mathsf B}=\{z \in B \times B\times B \ : \ z_1 \curlyvee z_2\equiv\top\mbox{ and } z_3\equiv -(z_1\curlywedge\neg z_2)\}.$$
The {\em hyper swap structure} for $Cia$ over $\mathsf B$ is the hyperalgebra $\mathsf S^{\mathsf{Cia}}(\mathsf B)=\langle S^{\mathsf{Cia}}_{\mathsf B}, \curlywedge,\curlyvee,\multimap,\neg,\circ\rangle$ over the signature $\Sigma$ such that the hyperoperators are defined as follows:
\begin{align*}
    z\curlywedge w&:=\{u\in S^{\mathsf{Cia}}_{\mathsf B} \ : \ u_1\in z_1\curlywedge w_1\mbox{ and } z_3\curlywedge w_3\preceq u_3\}\\
    z\curlyvee w&:=\{u\in S^{\mathsf{Cia}}_{\mathsf B} \ : \ u_1\in z_1\curlyvee w_1\mbox{ and } z_3\curlywedge w_3\preceq u_3 \}\\
    z\multimap w&:=\{u\in S^{\mathsf{Cia}}_{\mathsf B} \ : \ u_1\in z_1\multimap w_1\mbox{ and } z_3\curlywedge w_3\preceq u_3 \}\\
    \neg z&:=\{u\in S^{\mathsf{Cia}}_{\mathsf B} \ : \ u_1=z_2\mbox{ and }u_2\equiv z_1\}\\
    \circ z&:=\{u\in S^{\mathsf{Cia}}_{\mathsf B} \ : \ u_1=z_3\mbox{ and }u_2\equiv z_1\curlywedge z_2\}.
\end{align*}
 We may denote $S^{\mathsf{Cia}}(B)$ simply by  $S^{\mathsf{Cia}}_{\mathsf B}$.
\end{definition}

\subsection{Soundness and Completeness}

Here we demonstrate that each logic $\mathcal{L}$ in the hierarchy is sound and complete with respect to the class of its corresponding hyperalgebras. Furthermore, we show that the Nmatrices induced by the hyper swap structures $S^{\mathcal{L}}(B)$ characterize these logics, generalizing the completeness results obtained for mbC in Theorem~\ref{Sound-comple-mbC}.

\textbf{From now on, $\mathcal L$ will denote one of the logics mbCciw, mbCci, Ci, Cie or Cia.}

\begin{definition} [$\mathcal L$ semantics] \label{def-sem-L}
Let $\mathcal L\in\{\mathsf{mbCciw}, \mathsf{mbCci},\mathsf{Ci}, \mathsf{Cie},\mathsf{Cia}\}$ and $\mathsf A$ be a H$\mathcal L$A. Let $\Gamma \cup \{\varphi\}$ be a set of formulas over $\Sigma$.
\begin{enumerate}
    \item The Nmatrix associated with $\mathsf A$ is $\mathcal M^{\mathsf{H}\mathcal L\mathsf{A}}_{\mathsf A}=\langle \mathsf A, \top\rangle$.

    \item We say that $\varphi$ is a semantical consequence of $\Gamma$ w.r.t. $\mathsf A$ if $\Gamma \models_{\mathcal M^{\mathsf{H}\mathcal L\mathsf{A}}_{\mathsf A}} \varphi$.

    \item Let $\mathsf{H}\mathcal L\mathsf{A}$ be the class of H$\mathcal L$As. Then, $\varphi$ is a semantical consequence of $\Gamma$ w.r.t.  H$\mathcal L$As, denoted by $\Gamma \models_{\mathsf{H}\mathcal L\mathsf{A}} \varphi$, if $\Gamma \models_{\mathcal M^{\mathsf{H}\mathcal L\mathsf{A}}_{\mathsf A}} \varphi$ for every $\mathsf A \in \mathsf{H}\mathcal L\mathsf{A}$.
\end{enumerate}
\end{definition}

\begin{definition} \label{def-Nmatrix-mbCciw}
 Let $\mathcal L\in\{\mathsf{mbCciw}, \mathsf{mbCci},\mathsf{Ci}, \mathsf{Cie},\mathsf{Cia}\}$ and  $\mathsf A$ be a HBA. The Nmatrix associated with $\mathsf S^{\mathcal L}(\mathsf A)$ is $\mathcal{M}^{\mathcal L}(\mathsf A)=\langle \mathsf S^{\mathcal L}(\mathsf A),D^{\mathcal L}(\mathsf A) \rangle$ where the set of designated truth-values is $D^{\mathcal L}(\mathsf A)=\{z \in \mathsf S^{\mathcal L}(\mathsf A) \ : \ z_1\in\top\}$.
\end{definition}

\begin{proposition} \label{swap-SHmbCciw}
Let $\mathcal L\in\{\mathsf{mbCciw}, \mathsf{mbCci},\mathsf{Ci}, \mathsf{Cie},\mathsf{Cia}\}$. Let $\mathsf A$ be  a HBA, and let $\mathsf S^{\mathcal L}(\mathsf A)$ be the hyper swap structure for $\mathcal L$ over $\mathsf A$. Then:
\begin{enumerate}
    \item The relation $z \preceq w$  in  $\mathsf S^{\mathcal L}(\mathsf A)$  iff  $z_1 \preceq w_1$  in $\mathsf A$ defines a  preorder such that $\mathsf S^{\mathcal L}(\mathsf A)$ is an hyperlattice where, for every $z,w \in  S^{\mathcal L}(\mathsf A)$, $z\curlywedge w$ and $z\curlyvee w$ are the infimoid and the supremoid of $z$ and $w$, respectively.
    Moreover, $z \equiv w$ in  $\mathsf S^{\mathcal L}(\mathsf A)$ iff $z_1 \equiv w_1$ in $\mathsf A$. 
    \item $\mathsf S^{\mathcal L}(\mathsf A)$ is a H$\mathcal L$A.
    Moreover, $D^{\mathcal L}(\mathsf A)=\top$.
    \item $\mathcal{M}^{\mathcal L}(\mathsf A)= \mathcal M_{\mathsf S^{\mathcal L}(\mathsf A)}$.
\end{enumerate}
\end{proposition}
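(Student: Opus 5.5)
The plan is to mirror the proof of Proposition~\ref{swap-SHmbC} item by item. The structural hyperoperators $\curlywedge,\curlyvee,\multimap$ and the order on $\mathsf S^{\mathcal L}(\mathsf A)$ depend only on first coordinates, and the domain conditions constrain only the relation between coordinates. We first establish an analogue of \textbf{Fact~1}: for every $a\in A$ there is an element of $S^{\mathcal L}_{\mathsf A}$ with first coordinate $a$, and also one whose first coordinate is any prescribed element of $\top$ or of $\bot$. For the domain $z_1\curlyvee z_2\equiv\top$, $z_3\equiv -(z_1\curlywedge z_2)$ (reading the $\neg z_2$ in the definition as $z_2$), we take $z_2\in\top$ and $z_3\in -(z_1\curlywedge z_2)$. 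Then $z_1\curlywedge z_2\equiv z_1$ by Proposition~\ref{prop.HBA}(2), so $z_3\in -z_1$ works, and stability of $-z_1$ (Proposition~\ref{prop.HBA}(1),(3)) gives the required $\equiv$.

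For the hyperoperators $\neg$ and $\circ$ we must also check non-emptiness under the extra constraints. For Ci, given $z$ we need $u$ with $u_1=z_2$ and $u_2\preceq z_1$. We take $u_2\in -z_2$ and $u_3\in -(u_1\curlywedge u_2)$. Then $u_1\curlyvee u_2\equiv\top$ by (HBA~3). Moreover, from $z_1\curlyvee z_2\equiv\top$ we get $-z_2\preceq z_1$ via Proposition~\ref{prop.HBA}(5),(6),(4). For $\circ$, we need $u_1=z_3$ and $u_2\equiv z_1\curlywedge z_2$, and we must verify $z_3\curlyvee (z_1\curlywedge z_2)\equiv\top$, which follows from $z_3\equiv -(z_1\curlywedge z_2)$ and (HBA~3). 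For Cie, we take $u_2=z_1$, since $z_2\curlyvee z_1\equiv\top$ already holds. For Cia, the extra constraint $z_3\curlywedge w_3\preceq u_3$ in $\curlywedge,\curlyvee,\multimap$ is met by choosing $u_3$ maximal. Concretely, we pick $u_2\in\top$, which forces $u_3\equiv -u_1$; this need not dominate $z_3\curlywedge w_3$. This is the step we expect to be the main obstacle. Instead we pick $u_2$ so that $u_1\curlywedge u_2\in\bot$, namely $u_2\in -u_1$. Then $u_3\in\top$ by Proposition~\ref{prop.HBA}(8) applied to $\bot$, and this dominates everything.

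With non-emptiness secured, item~(1) follows verbatim from the argument of Proposition~\ref{swap-SHmbC}(1). The supremoid and infimoid characterisations use only first coordinates plus the analogue of Fact~1; for Cia, the upper bound witness $(a,b,c)$ is replaced by the maximal-third-coordinate element just described. Item~(2) then proceeds in two parts. First, the HBA axioms for $-z:=z\multimap\bot$ transfer from $\mathsf A$ exactly as before. Second, each axiom of the relevant H$\mathcal L$A is checked through first coordinates. (EM) and (BC) follow from the domain conditions, since $z_1\curlywedge z_2\curlywedge z_3\equiv(z_1\curlywedge z_2)\curlywedge-(z_1\curlywedge z_2)\equiv\bot$. (ciw) follows from $z_3\equiv-(z_1\curlywedge z_2)$ and (HBA~3). (ci) holds because $\neg\circ z$ has first coordinate $u_2\equiv z_1\curlywedge z_2$. (cf) holds because $\neg\neg z$ has first coordinate $\preceq z_1$, and (Cie) because it is $\equiv z_1$. (Cia$_\#$) follows from the third-coordinate clause, using Proposition~\ref{stable-wedge-02} to pass between sets and elements. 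Finally, $D^{\mathcal L}(\mathsf A)=\mathsf{Max}=\top$ as before, and item~(3) is then immediate from the definitions.
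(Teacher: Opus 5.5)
For $\mathcal L\in\{\mathsf{mbCciw},\mathsf{mbCci},\mathsf{Ci},\mathsf{Cie}\}$ your argument is correct. It is the intended one: mirror Proposition~\ref{swap-SHmbC}, use the analogue of Fact~1 (any $(a,b,c)$ with $b\in\top$, $c\in -a$), and add the non-emptiness checks for the modified $\neg$ and $\circ$. Reading $\neg z_2$ as $z_2$ is the right repair of the typo.

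The genuine gap is $\mathcal L=\mathsf{Cia}$, and it is not where you locate it. In the proof of Proposition~\ref{swap-SHmbC}(1) the witness $(a,b,c)$ only has to belong to $\mathsf{Ub}(\{z,w\})$. That set depends on first coordinates alone, so changing the witness's third coordinate achieves nothing. What actually breaks is the closing inclusion $\mathsf{Min}(\mathsf{Ub}(\{z,w\}))\subseteq z\curlyvee w$ (dually $\mathsf{Max}(\mathsf{Lb}(\{z,w\}))\subseteq z\curlywedge w$). For mbC this inclusion uses that $u_1\in z_1\curlyvee w_1$ alone already gives $u\in z\curlyvee w$. For Cia one also needs $z_3\curlywedge w_3\preceq u_3$, and nothing forces it. The very element you discarded ($u_1\in z_1\curlywedge w_1$, $u_2\in\top$, $u_3\in -u_1$) lies in $\mathsf{Max}(\mathsf{Lb}(\{z,w\}))$ but in general not in the Cia $z\curlywedge w$.

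In fact items (1) and (2) are false for Cia, so no argument can close this gap. Take $\mathsf A=\mathbf 2$. Then $S^{\mathsf{Cia}}_{\mathsf A}=\{T,I,F\}$ with $T=(1,0,1)$, $I=(1,1,0)$, $F=(0,1,1)$, and the Cia clauses give $T\curlyvee T=\{T\}$ but $I\curlyvee I=\{T,I\}$.
\begin{itemize}
\item Under the first-coordinate preorder, the supremoid of $T$ with itself is $\{T,I\}\neq T\curlyvee T$, so item (1) fails.
\item No preorder at all can work. In a Morgado hyperlattice $x\curlyvee x$ is exactly the $\equiv$-class of $x$, which would require both $T\equiv I$ and $T\not\equiv I$.
\item Hence the reduct of $\mathsf S^{\mathsf{Cia}}(\mathsf A)$ is not an HBA, and item (2) fails.
\item Your claim that the HBA axioms for $-z:=z\multimap\bot$ ``transfer exactly as before'' also fails, because the Cia $\multimap$ is not determined by first coordinates. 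Here $\bot=\{F\}$, $-I=\{F\}$ and $-F=\{T\}$, so $I\notin{-}{-}I$, violating (HBA~2).
\end{itemize}

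Your non-emptiness argument and your check of (Cia$_\#$) are fine. But for Cia you should have flagged that the statement needs to be weakened, or the definitions changed, rather than asserting that the mbC proof goes through verbatim. One possible weakening: the Cia operations are only subsets of the Morgado infimoid and supremoid.
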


\begin{definition} [Hyper Swap structures semantics for $\mathcal L$] \label{def-sem-swap-L} \ \\
Let $\mathcal L\in\{\mathsf{mbCciw}, \mathsf{mbCci},\mathsf{Ci}, \mathsf{Cie},\mathsf{Cia}\}$. Let $\Gamma \cup \{\varphi\}$ be a set of formulas over $\mathcal L$. Then, $\varphi$ is a semantical consequence of $\Gamma$ w.r.t. hyper swap structures, denoted by $\Gamma \models_{\mathcal L}^{\mathsf{HSW}} \varphi$, whenever $\Gamma \models_{\mathcal{M}(\mathsf A)} \varphi$ for every hyper Boolean algebra $\mathsf A$.
\end{definition}

\begin{theorem} [Soundness and completeness of $\mathcal L$ w.r.t. hyperstructures semantics] \label{Sound-comple-L}\ \\
Let $\mathcal L\in\{\mathsf{mbCciw}, \mathsf{mbCci},\mathsf{Ci}, \mathsf{Cie},\mathsf{Cia}\}$. Let $\Gamma \cup \{\varphi\}$ be a set of formulas over $\Sigma$. The following assertions are equivalent:
\begin{enumerate}
    \item $\Gamma \vdash_{\mathcal L} \varphi$;
    \item $\Gamma \models_{\mathsf{H}\mathcal L\mathsf{A}} \varphi$;
    \item $\Gamma \models_{\mathcal L}^{\mathsf{HSW}} \varphi$.
\end{enumerate}
\end{theorem}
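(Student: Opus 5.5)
We follow the scheme of Theorem~\ref{Sound-comple-mbC}, proving $(1)\Rightarrow(2)\Rightarrow(3)\Rightarrow(1)$ for each $\mathcal L$.

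\emph{Soundness, $(1)\Rightarrow(2)$.} Every H$\mathcal L$A is in particular a HmbCA, so the axioms of mbC and (MP) are already valid by Theorem~\ref{Sound-comple-mbC}. It remains to check the extra schemas, each of which was turned into an algebraic condition. Axiom (ciw) is valid because of condition (ciw'): for any valuation $v$, $v(\circ\alpha\vee(\alpha\wedge\neg\alpha))$ lies in a set similar to $\circ v(\alpha)\curlyvee(v(\alpha)\curlywedge\neg v(\alpha))\equiv\top$, and all elements involved are similar by Proposition~\ref{stable-wedge-02}. For (ci), (cf), (Cie) and (Cia$_\#$), we use Proposition~\ref{prop:IHL1}(1): an implication $\alpha\to\beta$ is designated exactly when $v(\alpha)\preceq v(\beta)$. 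So the conditions (ci), (cf), (Cie) and (Cia$_\#$) on H$\mathcal L$As give the corresponding axioms directly. Where an axiom contains a conjunction, as in (Cia$_\#$), we combine this with Proposition~\ref{stable-wedge-02}(4).

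\emph{$(2)\Rightarrow(3)$.} This is immediate from Proposition~\ref{swap-SHmbCciw}(2),(3): each $\mathcal M^{\mathcal L}(\mathsf A)$ is the Nmatrix of an H$\mathcal L$A, with designated set $\top$. If that proposition is taken as needing proof, we check item~(2) coordinatewise. For example, (ciw') follows from $z_3\equiv -(z_1\curlywedge z_2)$ together with Proposition~\ref{prop.HBA}. Condition (cf) in $\mathsf S^{\mathsf{Ci}}$ follows because any $u\in\neg z$ has $u_2\preceq z_1$, and every $w\in\neg u$ has $w_1=u_2$. Condition (Cia$_\#$) follows from the clause $z_3\curlywedge w_3\preceq u_3$. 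The nonemptiness of the restricted hyperoperations has to be verified as in Fact~1. This is done by choosing second and third coordinates suitably, e.g.\ $u=(z_2,z_1,c)$ with $c\in-(z_2\curlywedge z_1)$ for $\neg$ in Cie.

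\emph{Completeness, $(3)\Rightarrow(1)$.} Suppose $\Gamma\nvdash_{\mathcal L}\varphi$, and take a $\varphi$-saturated $\Delta\supseteq\Gamma$ in $\mathcal L$. Build the HBA $\mathsf A_\Delta$ on $For(\Sigma)$ via $\alpha\preceq_\Delta\beta$ iff $\Delta\vdash_{\mathcal L}\alpha\to\beta$, exactly as before; this works since $\mathcal L$ extends $\mathsf{CPL}^+$ and classical negation is definable. Define $v_\Delta(\alpha)=(\alpha,\neg\alpha,\circ\alpha)$. The new work is to show two things. First, $v_\Delta(\alpha)\in S^{\mathcal L}_{\mathsf A_\Delta}$, i.e.\ $\circ\alpha\equiv_\Delta -(\alpha\wedge\neg\alpha)$; this uses (bc1) and (ciw) in $\Delta$. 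Second, $v_\Delta$ respects the strengthened hyperoperations. For $\circ$ we need $\neg\circ\alpha\equiv_\Delta\alpha\wedge\neg\alpha$, which uses (ci) together with the converse obtained from (bc1)-style reasoning applied to $\circ\alpha$. For $\neg$ in Ci we need $\neg\neg\alpha\preceq_\Delta\alpha$, which is (cf), and for Cie/Cia we need equivalence. For the Cia connectives we need $\circ\alpha\wedge\circ\beta\preceq_\Delta\circ(\alpha\#\beta)$, which is (Cia$_\#$). Then $v_\Delta(\gamma)$ is designated iff $\Delta\vdash\gamma$, so $\Gamma\not\models^{\mathsf{HSW}}_{\mathcal L}\varphi$.

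The main obstacle is the derivability step for $\circ$: showing that $\Delta\vdash\neg\circ\alpha\leftrightarrow(\alpha\wedge\neg\alpha)$ in mbCci and its extensions. The direction $(\alpha\wedge\neg\alpha)\to\neg\circ\alpha$ is the known mbCci fact that $\circ\alpha\vee\neg\circ\alpha$ together with (bc1) yields it. I would cite the standard derivations in~\cite{carnielli2016paraconsistent}, Chapter~3, rather than reproduce them.
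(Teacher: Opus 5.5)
Your scheme is the natural transfer of the mbC argument; the paper states this theorem without a separate proof and implicitly relies on that argument. It works for mbCciw, mbCci, Ci and Cie. It breaks down for Cia, however, and cannot be repaired within the paper's definitions.

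\textbf{Completeness for Cia.} You say the canonical map needs $\neg\neg\alpha\equiv_\Delta\alpha$, which is correct, because Definition~\ref{def-hyper-swap-cia} uses the Cie clause $u_2\equiv z_1$ for $\neg$. But Cia is only Ci plus (Cia$_\#$), and it does not prove $\alpha\to\neg\neg\alpha$. So in general $v_\Delta(\neg\alpha)\notin\neg v_\Delta(\alpha)$.

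No other valuation can save the argument. In every $\mathcal M^{\mathsf{Cia}}(\mathsf A)$ you have $v(\neg\neg p)_1=v(\neg p)_2\equiv v(p)_1$. So the first coordinate of $v(p\to\neg\neg p)$ lies in $v(p)_1\multimap v(\neg\neg p)_1=\top$ by Proposition~\ref{prop:IHL1}(1), and hence $\models^{\mathsf{HSW}}_{\mathsf{Cia}}p\to\neg\neg p$.

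Yet $\nvdash_{\mathsf{Cia}}p\to\neg\neg p$. Build a model over the two-element Boolean algebra as follows:
\begin{itemize}
\item values $T=(1,0,1)$, $I=(1,1,0)$, $F=(0,1,1)$, with designated set $\{T,I\}$;
\item the Ci negation $\neg z=\{u : u_1=z_2,\ u_2\le z_1\}$;
\item the mbCci consistency operator $\circ z=\{u : u_1=z_3,\ u_2=z_1\wedge z_2\}$;
\item the Cia clause $z_3\wedge w_3\le u_3$ on $\wedge,\vee,\to$.
\end{itemize}
Every axiom of Cia is valid in this model and MP preserves designation. The valuation $v(p)=I$, $v(\neg p)=T$, $v(\neg\neg p)=F$ refutes $p\to\neg\neg p$. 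So $(3)\Rightarrow(1)$ is false for Cia as $\mathsf S^{\mathsf{Cia}}$ is defined; its negation would have to be the Ci one, $u_2\preceq z_1$.

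\textbf{The step $(2)\Rightarrow(3)$ for Cia.} The restricted operation $z\curlywedge w=\{u : u_1\in z_1\curlywedge w_1,\ z_3\curlywedge w_3\preceq u_3\}$ (and likewise $\curlyvee$) is not the Morgado infimoid (resp.\ supremoid) of the preorder given by first coordinates. Over $\mathbf 2$, $T\curlywedge T=\{T\}$, whereas $\mathsf{Max}(\mathsf{Lb}(\{T\}))=\{T,I\}$. So Proposition~\ref{swap-SHmbCciw}(1),(2) fail here, and checking (Cia$_\#$) coordinatewise does not make $\mathsf S^{\mathsf{Cia}}(\mathsf A)$ an HCiaA.

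Worse, the class HCiaA is itself too strong. In any HCiaA, $x\curlywedge x=\{x' : x'\equiv x\}$ and $c\in c\curlywedge c$. So (Cia$_\wedge$) with $y=x$ gives $\circ x\equiv\circ x'$ whenever $x\equiv x'$. Hence $p\leftrightarrow q\models_{\mathsf{HCiaA}}\circ p\to\circ q$. The three-valued model above refutes this with $v(p)=T$, $v(q)=I$, $v(p\to q)=v(q\to p)=T$, $v(\circ p)=T$, $v(\circ q)=F$. So for Cia even $(2)\Rightarrow(1)$ fails. The statement needs corrected definitions of both $\mathsf S^{\mathsf{Cia}}$ and HCiaA before any proof can succeed.
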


\subsection{Kalman Functors}

Finally, we extend the categorical analysis to these stronger systems. To obtain a full equivalence of categories, we introduce the notion of \emph{Enriched Hyper $\mathcal{L}$ Algebras} (EH$\mathcal L$As) for each logic $\mathcal{L}$. This step is crucial to circumvent the lack of adjunction discussed in the Introduction. By refining the functors $S$ and $U$ defined in Section~\ref{sec6}, we prove that the category of Hyper Boolean Algebras is equivalent to the category of EH$\mathcal L$As, showing that the representation of these logics via swap structures is categorically robust.

\begin{definition}
    Let $\mathcal L\in\{\mathsf{mbCciw}, \mathsf{mbCci},\mathsf{Ci}, \mathsf{Cie},\mathsf{Cia}\}$. The category \textbf{H$\mathcal L$A} is the one where the objects are H$\mathcal L$As and the morphisms are the H$\mathcal L$A-morphisms.
\end{definition}

\begin{theorem}
    Let $\mathcal L\in\{\mathsf{mbCciw}, \mathsf{mbCci},\mathsf{Ci}, \mathsf{Cie},\mathsf{Cia}\}$. The hyper swap structure construction provides a Kalman functor $\mathsf S^{\mathcal L}:\textbf{HBA}\rightarrow \textbf{H$\mathcal L$A}$.
\end{theorem}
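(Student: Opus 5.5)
The plan follows the proof of the corresponding mbC result: the object part of $\mathsf S^{\mathcal L}$ is already settled by Proposition~\ref{swap-SHmbCciw}(2), which states that $\mathsf S^{\mathcal L}(\mathsf A)$ is an H$\mathcal L$A for every HBA $\mathsf A$. So the work is entirely on morphisms. For an HBA-morphism $f:\mathsf A_1\to\mathsf A_2$ we set $\mathsf S^{\mathcal L}(f)(z_1,z_2,z_3):=(f(z_1),f(z_2),f(z_3))$, exactly as for mbC. Functoriality, i.e.\ $\mathsf S^{\mathcal L}(1_{\mathsf A})=1$ and $\mathsf S^{\mathcal L}(f\circ g)=\mathsf S^{\mathcal L}(f)\circ\mathsf S^{\mathcal L}(g)$, is then immediate componentwise. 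The two things left to check are well-definedness (that $\mathsf S^{\mathcal L}(f)$ lands in $S^{\mathcal L}_{\mathsf A_2}$) and that $\mathsf S^{\mathcal L}(f)$ is an H$\mathcal L$A-morphism.

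\textbf{Preservation lemma.} First I would isolate a lemma saying that HBA-morphisms preserve the relevant similarity conditions on stable sets:
\begin{enumerate}
\item $f(a)\curlyvee f(b)\equiv\top$ whenever $a\curlyvee b\equiv\top$, and dually for $\bot$;
\item $f(c)\equiv f(d)$ whenever $c\equiv d$;
\item $f(c)\preceq f(d)$ whenever $c\preceq d$.
\end{enumerate}
For item (3), note that $c\preceq d$ iff $c\in c\curlywedge d$, so $f(c)\in f(c\curlywedge d)\subseteq f(c)\curlywedge f(d)$, which gives $f(c)\preceq f(d)$. Item (2) follows by applying (3) in both directions. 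For item (1), pick $e\in a\curlyvee b$; then $e\in\top$, and $f(e)\in f(a)\curlyvee f(b)$ together with $f(e)\in\top$ (because $f(\top)\subseteq\top$). Since $f(a)\curlyvee f(b)$ is stable, it follows that $f(a)\curlyvee f(b)\equiv\top$. Combining these facts with $f(-x)\subseteq -f(x)$ and the stability results of Propositions~\ref{stable-wedge-02} and~\ref{prop.HBA}, we obtain that $z_3\equiv-(z_1\curlywedge z_2)$ (reading the $\neg$ in the displayed definition of $S^{\mathcal L}_{\mathsf B}$ as the Boolean $-$) is transported to $f(z_3)\equiv-(f(z_1)\curlywedge f(z_2))$. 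This yields well-definedness for every $\mathcal L$.

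\textbf{Morphism conditions.} Next I would verify the morphism conditions one hyperoperation at a time. For $\curlywedge,\curlyvee,\multimap$ the defining condition on the first coordinate is handled exactly as in the mbC case; for $\multimap$ this uses Proposition~\ref{prop:defimp}, writing $a\multimap b=-a\curlyvee b$. For $\neg$ and $\circ$, the extra clauses appearing for mbCci, Ci, Cie and Cia all reduce to the lemma:
\begin{itemize}
\item $u_2\equiv z_1\curlywedge z_2$ is preserved by items (1)--(2);
\item $u_2\preceq z_1$ is preserved by item (3);
\item $u_2\equiv z_1$ is preserved by item (2).
\end{itemize}
Equalities of the form $u_1=z_2$ and $u_1=z_3$ are trivially preserved because $f$ is applied componentwise.

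\textbf{Main obstacle.} I expect Cia to be the hardest case, because there the binary hyperoperations carry the extra clause $z_3\curlywedge w_3\preceq u_3$. To preserve it, I would use that $f(z_3)\curlywedge f(w_3)$ is stable and contains $f(e)$ for any $e\in z_3\curlywedge w_3$, and that $e\preceq u_3$ gives $f(e)\preceq f(u_3)$ by item (3). A secondary subtlety is that, depending on the intended reading, H$\mathcal L$A-morphisms may require $f(y)\in\neg f(x)$ for $y\in\neg x$, which is precisely the clause-by-clause check above. If some clause fails to be preserved up to $\equiv$, the fallback is to note that all the swap-structure conditions depend only on $\equiv$-classes, which are stable, and to argue on those classes instead.
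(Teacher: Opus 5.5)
Your proposal is correct and takes the same route as the paper. The paper gives no separate proof for this theorem and implicitly reuses its mbC argument: apply $f$ componentwise, check that the defining conditions of $S^{\mathcal L}_{\mathsf B}$ are preserved, and note that functoriality is immediate. Your preservation lemma (monotonicity via $c\preceq d \iff c\in c\curlywedge d$, and transport of $\equiv\top$, $\equiv\bot$ and $z_3\equiv-(z_1\curlywedge z_2)$) and your clause-by-clause check for $\neg$, $\circ$ and the Cia conditions simply make explicit what the paper leaves as routine.
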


\begin{definition}[Enriched Hyper mbCciw Algebras] \label{enriched-mbCciw} \ \\
    Let $\mathsf A=\langle A,\curlywedge,\curlyvee,-,\bot,\top,\neg,\circ\rangle$ be a HmbCiwA. We say that $\mathsf A$ is an {\em enriched hyper mbCciw algebra (EHmbCciwA)} if it satisfies the following additional axioms, for all $x,y,z\in A$:
    \begin{description}
        \item[E0 -] $x\in\neg\neg x$ and $x\in{\circ}{\circ} x$.
        
        \item[E1 -] $\neg x$ and $\circ x$ are stable.
        
        \item[E2 -] The following relation $\sim$ is a transitive relation (which implies, considering E0, that it is an equivalence relation):
        $$x\sim y\mbox{ iff there exists }z\mbox{ such that }x,y\in\neg z\mbox{ or }x,y\in\circ z.$$
        Note that, by E1, $x\sim y$ implies $x\equiv y$. 

        \item[E3 -] If $x\curlyvee y\equiv\top$ and $z\equiv -(x\curlywedge \neg y)$ then there exists $w$ such that $x\sim w$, $y\sim\neg w$ and $z\sim\circ w$.\footnote{Remember that, for $X,Y\subseteq\mathsf A$, $X\sim Y$ denotes that $x\sim y$ for all $x\in X$ and all $y\in Y$.}
        \item [E4 -] If $x\sim y$, $\neg x\sim\neg y$ and $\circ x\sim\circ y$ then $x=y$.
    \end{description}
    For $x\in A$ we denote $[x]:=\{y \ : \ x\sim y\}$ and
    $$\mathsf U^{\mathsf{mbCciw}}(\mathsf A):=A/{\sim}=\{[x] \ : \ x\in A\}.$$
    The category \textbf{EHmbCciwA} is the one where the objects are EHmbCciwAs and the morphisms are the \textbf{HmbCciwA}-morphisms $f:\mathsf A_1\rightarrow \mathsf A_2$.
\end{definition}

\begin{definition}[Enriched Hyper $\mathcal L$ Algebras] \label{enriched-L}
    Let $\mathcal L\in\{\mathsf{mbCci},\mathsf{Ci}, \mathsf{Cie},\mathsf{Cia}\}$. Let $\mathsf A=\langle A,\curlywedge,\curlyvee,-,\bot,\top,\neg,\circ\rangle$ be a H$\mathcal L$A. We say that $\mathsf A$ is an {\em enriched hyper $\mathcal L$ algebra (EH$\mathcal L$A)} if it is an EHmbCciwA. For $x\in A$ we denote $[x]:=\{y \ : \ x\sim y\}$ and
    $$\mathsf U^{\mathcal L}(\mathsf A):=A/{\sim}=\{[x] \ : \ x\in A\}.$$
    The category \textbf{EH$\mathcal L$A} is the one where the objects are EH$\mathcal L$As and the morphisms are the \textbf{H$\mathcal L$A}-morphisms.
\end{definition}

It is worth noting that enriched hyper $\mathcal L$ algebras abstract the notion of hyper swap structures for  $\mathcal L$, as the following result shows:

\begin{proposition}
    Let $\mathcal L\in\{\mathsf{mbCciw}, \mathsf{mbCci},\mathsf{Ci}, \mathsf{Cie},\mathsf{Cia}\}$. Let $\mathsf A$ be a HBA. Then the hyper swap $S^{\mathcal L}(B)$ is an enriched hyper $\mathcal L$ algebra.
\end{proposition}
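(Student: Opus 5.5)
The plan mirrors the proof of the analogous result for $\mathsf S^{\mathsf{mbC}}(\mathsf A)$. By Proposition~\ref{swap-SHmbCciw}(2), $\mathsf S^{\mathcal L}(\mathsf A)$ is already a H$\mathcal L$A. By Definitions~\ref{enriched-mbCciw} and~\ref{enriched-L}, it therefore remains to check axioms E0--E4, where E3 takes its mbCciw form. The domain is the same set $S_{\mathsf A}=\{z : z_1\curlyvee z_2\equiv\top,\ z_3\equiv -(z_1\curlywedge z_2)\}$ for every $\mathcal L$; I read the ``$\neg z_2$'' inside the domain condition as $z_2$. The operators differ only in extra side conditions on the second coordinate of outputs of $\neg$ and $\circ$ (and on the third coordinate in the Cia case). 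I will therefore prove a uniform description of $\neg z$ and $\circ z$ first, then treat each axiom, adding only the small extra argument needed for each side condition.

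\textbf{E0 and E1.} Given $z$, I exhibit a witness $u\in\neg z$ with $z\in\neg u$. The natural choice is $u=(z_2,z_1,u_3)$ with $u_3\in -(z_2\curlywedge z_1)$. Commutativity of $\curlywedge,\curlyvee$ gives $u\in S_{\mathsf A}$, since $u_3\equiv -(z_1\curlywedge z_2)$ by Proposition~\ref{prop.HBA}(1),(3). The Ci condition $u_2\preceq z_1$ and the Cie/Cia condition $u_2\equiv z_1$ hold trivially, and so do their symmetric versions needed for $z\in\neg u$. For $\circ$, take $u=(z_3,\,c,\,d)$ with $c\in z_1\curlywedge z_2$ and $d$ chosen so that $u\in S_{\mathsf A}$. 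Then $u\in\circ z$. To get $z\in\circ u$ we need $z_1=u_3$, $z_2\equiv u_1\curlywedge u_2$ and $z_3\equiv -(u_1\curlywedge u_2)$.

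\textbf{Main obstacle.} This last requirement is the delicate step. In mbCciw (no side condition on $\circ$) I simply choose $u=(z_3,z_1,\dots)$ to force $z_1$ into the right place. In the ci-variants I must check that $c\equiv z_1\curlywedge z_2$ is compatible with $u_1\curlyvee u_2\equiv\top$; this is a Boolean computation $-(x\curlywedge y)\curlyvee(x\curlywedge y)\equiv\top$ from (HBA~3). I must also check that $z_2\equiv z_3\curlywedge(z_1\curlywedge z_2)$, which amounts to using Proposition~\ref{prop.HBA}(5)--(10) to reduce to an identity in the quotient Boolean algebra $B/{\equiv}$. If such an identity fails in general, E0 for $\circ$ would have to be verified using the specific $\equiv$-freedom in coordinates, and I would isolate exactly that computation.

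Stability (E1) follows at once: every output $u$ in $\neg z$ or $\circ z$ has a fixed first coordinate, and $\equiv$ on $S_{\mathsf A}$ is $\equiv$ on first coordinates by Proposition~\ref{swap-SHmbCciw}(1).

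\textbf{E2 and E4.} As in the mbC case, I show that $z\sim w$ iff $z_1=w_1$. The forward direction is immediate from the form of $\neg$ and $\circ$. For the converse, given $w$ with $w_1=z_1$, I use the witness $u$ built in E0 (so $z\in\neg u$) and check that $w\in\neg u$ as well; the side conditions only involve $\preceq$ or $\equiv$ with $u_1$, and are therefore insensitive to the other coordinates of $w$. Transitivity of $\sim$ then follows. For E4, if $x\sim y$, $\neg x\sim\neg y$ and $\circ x\sim\circ y$, then $x_1=y_1$, $x_2=y_2$ and $x_3=y_3$, so $x=y$.

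\textbf{E3.} The hypotheses $x\curlyvee y\equiv\top$ and $z\equiv -(x\curlywedge\neg y)$ transfer to first coordinates, giving $x_1\curlyvee y_1\equiv\top$ and $z_1\equiv -(x_1\curlywedge y_1)$ (reading $\neg y$ as $y$ in the base). Setting $w=(x_1,y_1,z_1)$, we get $w\in S_{\mathsf A}$, and by the description of $\sim$, $x\sim w$, $y\sim\neg w$ and $z\sim\circ w$. This is the same argument as before.
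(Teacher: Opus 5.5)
There is a genuine gap, and it is exactly the step you called the main obstacle. It cannot be closed, because for $\mathcal L\in\{\mathsf{mbCci},\mathsf{Ci},\mathsf{Cie},\mathsf{Cia}\}$ the requirement $x\in\circ\circ x$ of E0 is false in $\mathsf S^{\mathcal L}(\mathsf A)$ unless $\top=\bot$ in $\mathsf A$.

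If $u\in\circ z$, then $u_1=z_3\equiv-(z_1\curlywedge z_2)$ and $u_2\equiv z_1\curlywedge z_2$, so $u_1\curlywedge u_2\equiv\bot$ by (HBA~4). Two consequences follow:
\begin{itemize}
\item $z\in\circ u$ would force $z_2\equiv\bot$.
\item Every $v\in\circ u$ has $v_1=u_3\equiv-\bot\equiv\top$, so $\circ\circ z\subseteq\top$.
\end{itemize}
Concretely, over the two-element Boolean algebra take $z=(1,1,0)$. Then $\circ z=\{(0,1,1)\}$ and $\circ\circ z=\{(1,0,1)\}$, so $z\notin\circ\circ z$. There is no ``$\equiv$-freedom'' left to exploit: $z$ is given, and the coordinates of $u$ are pinned down as above.

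The logic forces this too. In $\mathsf{mbCci}$ one derives $\circ\beta\vee(\beta\wedge\neg\beta)$ from (EM) and (ci). For $\beta=\circ\alpha$ the second disjunct is explosive by (ci) and (bc1), so $\vdash_{\mathsf{mbCci}}\circ\circ\alpha$, and $\circ\circ x$ must always be designated.

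The paper gives no proof of this proposition beyond the analogy with the mbC case, and the analogy breaks precisely here. As formulated, the statement can only be proved for $\mathsf{mbCciw}$. For the other logics the $\circ$-half of E0 would have to be dropped; it is not needed anyway, since $x\in\neg\neg x$ already yields $x\sim x$.

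Three further steps also fail as written.
\begin{itemize}
\item \textbf{Description of $\sim$.} You claim $\sim$ is equality of first coordinates for every $\mathcal L$, because the side conditions are ``insensitive to the other coordinates of $w$''. They are not: they constrain $w_2$.
\begin{itemize}
\item In $\mathsf{Ci}$, $w\in\neg u$ needs $w_2\preceq u_1$. Your witness $u=(z_2,z_1,\cdot)$ therefore only works when $w_2\preceq z_2$; taking $u_1\in\top$ instead repairs this.
\item In $\mathsf{Cie}$ and $\mathsf{Cia}$, both $\neg u$ and $\circ u$ fix the second coordinate up to $\equiv$. Hence $z\sim w$ iff $z_1=w_1$ and $z_2\equiv w_2$, and nothing repairs this.
\end{itemize}
\item \textbf{E3 for $\mathsf{Cie}$ and $\mathsf{Cia}$.} With this finer relation, E3 fails. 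Over $\mathbf 2$, the elements $x=(1,1,0)$, $y=(1,0,1)$, $z=(0,1,1)$ satisfy the (corrected) hypotheses. The only $w\sim x$ is $x$ itself, and $\neg x=\{(1,1,0)\}$ is not $\sim$-related to $y$.
\item \textbf{Your witness in the $\mathsf{mbCciw}$ case.} Your witness $u=(z_3,z_1,\cdot)$ for $z\in\circ\circ z$ is wrong. Membership in the domain forces $u_3\equiv-(z_3\curlywedge z_1)\equiv -z_1\curlyvee z_2$, not $z_1$. Take instead $u=(z_3,d,z_1)$ with $d\in e\curlyvee z_2$ for some $e\in -z_1$. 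Then $z_3\curlyvee d\equiv\top$ and $z_3\curlywedge d\equiv -z_1$, so $u$ lies in the domain, $u\in\circ z$ and $z\in\circ u$.
\end{itemize}
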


\begin{proposition} \label{functor U-L}
    Let $\mathcal L\in\{\mathsf{mbCciw}, \mathsf{mbCci},\mathsf{Ci}, \mathsf{Cie},\mathsf{Cia}\}$ and let $\mathsf A$ be an EH$\mathcal L$A. Then $\langle\mathsf U^{\mathcal L}(\mathsf A),\preceq_{\mathsf U^{\mathcal L}(\mathsf A)}\rangle$ is a HBA (which will be also denoted by $\mathsf U^{\mathcal L}(\mathsf A)$) with the relation $\preceq_{\mathsf U^{\mathcal L}(\mathsf A)}$ defined by $[x]\preceq_{\mathsf U^{\mathcal L}(\mathsf A)}[y]$ iff $x\preceq y$. Moreover, the assignment $\mathsf A\mapsto \mathsf U^{\mathcal L}(\mathsf A)$ provides a functor $\mathsf U^{\mathcal L}:\textbf{EH$\mathcal L$A}\rightarrow \textbf{HBA}$.
\end{proposition}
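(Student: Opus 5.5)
The plan is to follow the proof of Proposition~\ref{functor U-mbC} almost verbatim. The key observation is that the construction of $\mathsf U^{\mathcal L}(\mathsf A)$ uses only axioms E0--E2 and E4 of an enriched algebra, the HBA reduct of $\mathsf A$, and the preservation of $\neg$ and $\circ$ by morphisms. Since every EH$\mathcal L$A is by definition an EHmbCciwA, axioms E0--E2 and E4 are literally the same as in Definition~\ref{enriched-mbc}. The only axiom that changes is E3, which plays no role in this proposition; it is used only for surjectivity of $\Psi$.

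The steps are as follows.

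First, by E2 and E0, $\sim$ is an equivalence relation on $A$. By E1, $x\sim x'$ implies $x\equiv x'$, because $x$ and $x'$ then lie in a common stable set $\neg w$ or $\circ w$. Hence $[x]\preceq[y]$ iff $x\preceq y$ is a well-defined preorder on $A/{\sim}$.

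Second, I define the operations on classes:
\[
[x]\curlywedge[y]=\{[d] : d\in x\curlywedge y\}, \qquad [x]\curlyvee[y]=\{[d] : d\in x\curlyvee y\},
\]
and set $-[x]=\{[d]: d\in -x\}$. Independence of representatives holds because infimoids and supremoids of similar elements coincide; this follows from their definition as $\mathsf{Max}(\mathsf{Lb})$ and $\mathsf{Min}(\mathsf{Ub})$, since $\mathsf{Lb}$ and $\mathsf{Ub}$ depend only on $\equiv$-classes. For $-$, independence follows from (HBA~1) together with the fact that $-x$ is closed under $\equiv$. That closure holds because $-x=x\multimap\bot$ by Proposition~\ref{prop:defimp}, and $\multimap$ satisfies (I3).

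Third, I check that these are the Morgado infimoid and supremoid of the quotient preorder. The quotient map $q$ preserves and reflects $\preceq$, so $\mathsf{Lb}(\{[x],[y]\})=q(\mathsf{Lb}(\{x,y\}))$, and similarly for maxima. The same transfer gives boundedness, with $\top_U=q(\top)$ and $\bot_U=q(\bot)$, as well as distributivity and (HBA~1)--(HBA~4), since all of these are stated up to $\equiv$ and $q$ reflects $\equiv$.

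Fourth, functoriality. For an \textbf{EH$\mathcal L$A}-morphism $f$, set $\mathsf U^{\mathcal L}(f)([x])=[f(x)]$. This is well defined because $f$ preserves membership in $\neg w$ and $\circ w$. The HBA-morphism conditions then follow from the corresponding inclusions for $f$. Identities and composition are immediate.

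The main obstacle is the third step, specifically (HBA~2). One needs $[x]\in--[x]$, which requires $x\sim d$ for some $d\in--x$. We know $x\in--x$, so we take $d=x$; this works provided $-[x]$ is computed as the image of $-x$. Because $-x$ and its image are $\equiv$-closed, I expect this to go through. The remaining delicate point is whether the Morgado sets in the quotient are exactly the images rather than merely similar to them, and I would settle that using closure under $\equiv$.
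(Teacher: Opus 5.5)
Your route is the paper's. Its proof, given for mbC in Section~\ref{sec6} and meant verbatim here, checks three things: that $\sim$ refines $\equiv$, that the images of $\curlywedge,\curlyvee$ do not depend on representatives, and that $\mathsf U(f)([x])=[f(x)]$ is well defined. It never spells out the negation of the quotient.

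\textbf{The gap.} The one step you add beyond the paper rests on a false claim. Proposition~\ref{prop:defimp} does not give $-x=x\multimap\bot$. It gives $x\multimap\bot=-x\curlyvee\bot$, and since $c\curlyvee e=\{u : u\equiv c\}$ for $e\in\bot$, this set is the $\equiv$-closure of $-x$. Nothing in (HBA~1)--(HBA~4) forces $-x$ to be $\equiv$-closed. For instance, take $\{0,0',a,b\}$ with $0\equiv 0'\prec a\equiv b$, and set $-0=\{a\}$, $-0'=\{b\}$, $-a=\{0\}$, $-b=\{0'\}$; all four HBA axioms hold.

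This failure matters because $\sim$ is in general strictly finer than $\equiv$. Take, say, $\mathcal L=\mathsf{mbCciw}$ and the structure $\mathsf S^{\mathcal L}(\mathsf B)$ over the three-element HBA $\{0,a,b\}$ of the paper's example. There $z\sim w$ iff $z_1=w_1$. Write $C_c=\{u : u_1=c\}$. Replace the negation of this structure by one that sends all of $C_a\cup C_b$ to $C_0$, sends some elements of $C_0$ to $C_a$, and sends the rest of $C_0$ to $C_b$. This new negation is pointwise similar to the old one and still satisfies (HBA~1)--(HBA~4). Every other HmbCciwA and enrichment axiom mentions $-$ only up to $\equiv$, so the result is still an EH$\mathcal L$A. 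But $C_0$ is a single $\sim$-class, and your $-[x]=\{[d] : d\in -x\}$ is $\{C_a\}$ or $\{C_b\}$ depending on which $x\in C_0$ you pick. So the ``delicate point'' you flagged is exactly where the argument breaks.

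\textbf{The repair.} Saturate: put $-[x]:=\{[d] : d\equiv c \text{ for some } c\in -x\}$, the image of $x\multimap\bot$.
\begin{itemize}
\item Since $-x$ is stable by Proposition~\ref{prop.HBA}(1), this is the image of a single $\equiv$-class. By (HBA~1) that class depends only on the $\equiv$-class, hence only on the $\sim$-class, of $x$.
\item Your argument for (HBA~2) survives: choose $c\in -x$ with $x\in -c$; then $[c]\in -[x]$ and $[x]\in -[c]$.
\item (HBA~1), (HBA~3) and (HBA~4) transfer as you say.
\item The morphism condition $\mathsf U^{\mathcal L}(f)(-[x])\subseteq -[f(x)]$ now needs one extra fact: HBA-morphisms preserve $\equiv$. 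Indeed, if $c\equiv d$ then $c,d\in c\curlywedge d$, so $f(c),f(d)\in f(c)\curlywedge f(d)$, which gives $f(c)\equiv f(d)$.
\end{itemize}
With these changes the rest of your plan goes through. The Morgado sets and $\top,\bot$ are $\equiv$-closed, so they are exactly the images under the quotient map, and functoriality works as in the paper.
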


Observe that, since the hyper swap structure $S^{\mathcal L}(\mathsf A)$ over a HBA $\mathsf A$ is an EH$\mathcal L$A, the Kalman functor $\mathsf S:\textbf{HBA}\rightarrow \textbf{H$\mathcal L$A}$ can be seen as a functor $\mathsf S^{\mathcal L}:\textbf{HBA}\rightarrow \textbf{EH$\mathcal L$A}$. 

\begin{theorem}\label{equivalence-01-L}
    Let $\mathcal L\in\{\mathsf{mbCciw}, \mathsf{mbCci},\mathsf{Ci}, \mathsf{Cie},\mathsf{Cia}\}$. For all $\mathsf A\in \textbf{HBA}$ there is an isomorphism $\Phi_{\mathsf A}:\mathsf A\rightarrow \mathsf U^{\mathcal L}(\mathsf S^{\mathcal L}(\mathsf A))$. Moreover, this provides a natural isomorphism of functors $\Phi:1_{\textbf{HBA}}\Rightarrow \mathsf U^{\mathcal L}\circ \mathsf S^{\mathcal L}$ given by $\mathsf A\mapsto\Phi_{\mathsf A}$.
\end{theorem}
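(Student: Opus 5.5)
The proof will follow the template of Theorem~\ref{equivalence-01-mbC}, adapted to the new domains $S^{\mathcal L}_{\mathsf A}$. The only real change is that the third coordinate is no longer free: it is determined up to $\equiv$ by $z_3\equiv -(z_1\curlywedge \neg z_2)$. Here I read $\neg z_2$ in $\mathsf A$ as the Boolean complement $-z_2$, or at least as some stable set depending only on $z_2$.

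For $x\in A$, I will define $\Phi_{\mathsf A}(x):=[(x,y,z)]$, where $(x,y,z)\in S^{\mathcal L}_{\mathsf A}$ is any triple with first coordinate $x$. The first task is existence of such a triple. I will take $y\in -x$ (so $x\curlyvee y\equiv\top$ by (HBA~3)) and $z\in -(x\curlywedge \neg y)$. Such a $z$ exists because hyperoperations are nonempty, and any element of that set lies in it up to $\equiv$, since the set is stable by Proposition~\ref{stable-wedge-02} together with Proposition~\ref{prop.HBA}(1),(3).

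Next I must check that in $\mathsf S^{\mathcal L}(\mathsf A)$ two triples are $\sim$-related iff their first coordinates coincide. For mbCciw and mbCci this is as in the mbC case. For $u,u'\in\neg w$ in Ci or Cie, we get $u_1=u'_1=w_2$. Conversely, if $u_1=u'_1$, I will need some $w$ with $u,u'\in\neg w$. I will try $w=(w_1,u_1,w_3)$ with $w_1\in\top$, so that $w_1\curlyvee u_1\equiv\top$; for Ci I must also have $u_2\preceq w_1$, which holds since $w_1\in\top$. Cie and Cia instead require $u_2\equiv w_1$, so I will use the $\circ$ clause for those cases. There I need $w$ with $w_3=u_1$ and $w_1\curlywedge w_2\equiv u_2$; since $u_2$ must be similar for both $u$ and $u'$, this may fail. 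I expect this identification of $\sim$ to be the main obstacle. The fallback is to rely on E2 (transitivity of $\sim$) in $\mathsf S^{\mathcal L}(\mathsf A)$, which the preceding proposition asserts, and to chain through intermediate triples sharing the first coordinate.

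With that fact in hand, the rest is routine. Well-definedness and injectivity follow exactly as for mbC: $\Phi_{\mathsf A}(x)=\Phi_{\mathsf A}(x')$ forces the triples to be $\sim$-related, hence $x=x'$. Surjectivity is immediate, since $[(x,y,z)]=\Phi_{\mathsf A}(x)$. For the morphism conditions, given $d\in x\curlywedge x'$, I choose any triple $(d,d',d'')\in S^{\mathcal L}_{\mathsf A}$. In the Cia case I must additionally pick $d''$ with $z_3\curlywedge z'_3\preceq d''$; this is compatible with $d''\equiv -(d\curlywedge -d')$ after adjusting $d'$ (for instance $d'\in -d$ makes $d''\in\top$). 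I then conclude $[(d,d',d'')]\in\Phi_{\mathsf A}(x)\curlywedge\Phi_{\mathsf A}(x')$, and argue similarly for $\curlyvee$, $\top$, $\bot$ and $-$.

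For the inverse, I will check that $[(x,y,z)]\mapsto x$ is also a HBA-morphism, using that the preorder in $\mathsf U^{\mathcal L}$ is computed on first coordinates. Finally, naturality is the same computation as in Theorem~\ref{equivalence-01-mbC}: $\mathsf U^{\mathcal L}(\mathsf S^{\mathcal L}(f))([(x,y,z)])=[(f(x),f(y),f(z))]=\Phi_{\mathsf A_2}(f(x))$.
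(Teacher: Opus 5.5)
\textbf{Verdict: there is a genuine gap in the Cie and Cia cases, and no argument can close it.} For mbCciw, mbCci and Ci your plan is sound. It is exactly the mbC template the paper has in mind; the paper gives no separate proof here. In those structures every $\sim$-step preserves the first coordinate. Conversely, two triples with first coordinate $a$ both lie in $\neg w$ for $w=(t,a,w_3)$ with $t\in\top$; for Ci the extra clause $u_2\preceq t$ is automatic. So $\sim$ is ``equal first coordinate'' and everything goes through.

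The gap is in Cie and Cia, and your fallback cannot close it. There $\neg w=\{u : u_1=w_2,\ u_2\equiv w_1\}$ and $\circ w=\{u : u_1=w_3,\ u_2\equiv w_1\curlywedge w_2\}$. So $u,u'\in\neg w$ forces $u_2\equiv w_1\equiv u'_2$, and $u,u'\in\circ w$ forces $u_2\equiv u'_2$ as well. Hence each generating step of $\sim$, and therefore any chain of such steps, preserves the $\equiv$-class of the second coordinate. Appealing to E2 and chaining through intermediate triples can never link $(x,y,z)$ with $(x,y',z')$ when $y\not\equiv y'$. So $\Phi_{\mathsf A}(x)=[(x,y,z)]$ depends on the choice of $y$ and is not well defined. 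This is precisely the step the paper's mbC proof asserts without comment, namely $(x,y,z)\sim(x,y',z')$; it is valid for mbC but not here.

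With the definitions as given, the statement is actually false for Cie and Cia. Take $\mathsf A=\mathbf 2$, with the intended constraint $z_3\equiv-(z_1\curlywedge z_2)$. Both structures have universe $\{(1,1,0),(1,0,1),(0,1,1)\}$, and:
$\neg(1,1,0)=\{(1,1,0)\}$, $\neg(1,0,1)=\{(0,1,1)\}$, $\neg(0,1,1)=\{(1,0,1)\}$,
$\circ(1,1,0)=\{(0,1,1)\}$, $\circ(1,0,1)=\circ(0,1,1)=\{(1,0,1)\}$.
All of these are singletons, so $\sim$ is the identity and $\mathsf U^{\mathcal L}(\mathsf S^{\mathcal L}(\mathbf 2))$ has three elements. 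No bijection with $\mathbf 2$ exists. Proving the theorem for these two logics would require changing the definition of $\sim$ or of $\mathsf U^{\mathcal L}$, not a better argument.

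Three smaller points.
\begin{enumerate}
\item The $\neg z_2$ in the definition of $S^{\mathcal L}_{\mathsf B}$ is evidently a slip. By (ciw'') the constraint must be $z_3\equiv-(z_1\curlywedge z_2)$. Under your reading $-(z_1\curlywedge -z_2)$, the triples violate (BC) in general, and $\circ$ can even be empty-valued in the Cie structure.
\item Your Cia remark that $d'\in -d$ yields $d''\in\top$ holds only under the intended reading; under yours it gives $d''\equiv -d$.
\item The preceding proposition you lean on is itself unreliable. The clause $x\in\circ\circ x$ of E0 already fails in the example above, since $\circ\circ(1,1,0)=\{(1,0,1)\}$.
\end{enumerate}
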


\begin{theorem}\label{equivalence-02-L}
    Let $\mathcal L\in\{\mathsf{mbCciw}, \mathsf{mbCci},\mathsf{Ci}, \mathsf{Cie},\mathsf{Cia}\}$. For all $\mathsf A\in \textbf{EH$\mathcal L$A}$ there is an isomorphism $\Psi_{\mathsf A}:\mathsf A\rightarrow \mathsf S^{\mathcal L}(\mathsf U^{\mathcal L}(\mathsf A))$. Moreover, this provides a natural isomorphism of functors $\Psi:1_{\textbf{EH$\mathcal L$A}}\Rightarrow \mathsf S^{\mathcal L}\circ \mathsf U^{\mathcal L}$ given by $\mathsf A\mapsto\Psi_{\mathsf A}$. 
\end{theorem}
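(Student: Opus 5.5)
We adapt the proof of Theorem~\ref{equivalence-02-mbC} uniformly in $\mathcal L$. The only changes are in the domain $S^{\mathcal L}_{\mathsf B}$ and in the definitions of $\neg$ and $\circ$ (and, for Cia, of the binary operations). So the plan is to isolate exactly where those differences enter.

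\textbf{Step 1: definition of $\Psi_{\mathsf A}$.} For $x\in A$ we set $\Psi_{\mathsf A}(x):=([x],[y],[z])$ with $y\in\neg x$ and $z\in\circ x$. This is independent of the choices, since by definition of $\sim$ any two elements of $\neg x$ (resp.\ $\circ x$) are $\sim$-related. We must check that the triple lies in $S^{\mathcal L}(\mathsf U^{\mathcal L}(\mathsf A))$, i.e., that $[x]\curlyvee[y]\equiv\top$ and $[z]\equiv-([x]\curlywedge[y])$ in $\mathsf U^{\mathcal L}(\mathsf A)$. These follow from (EM) and from (ciw'') in $\mathsf A$. To transport them we use that $\preceq_{\mathsf U}$ is defined representative-wise and that $\curlywedge,\curlyvee,-$ in $\mathsf U^{\mathcal L}(\mathsf A)$ are the images of those of $\mathsf A$ (Proposition~\ref{functor U-L}). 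We read the condition $z_3\equiv-(z_1\curlywedge\neg z_2)$ as $z_3\equiv-(z_1\curlywedge z_2)$, which is what makes sense in a HBA.

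\textbf{Step 2: $\Psi_{\mathsf A}$ is a morphism and a bijection.}
\begin{itemize}
\item \emph{Lattice operations.} For $\curlywedge,\curlyvee,\multimap$ the argument of Theorem~\ref{equivalence-02-mbC} applies verbatim: $d\in x\curlywedge x'$ gives $\Psi_{\mathsf A}(d)\in\Psi_{\mathsf A}(x)\curlywedge\Psi_{\mathsf A}(x')$. For Cia we must also check $[z]\curlywedge[z']\preceq[d'']$ with $d''\in\circ d$, and this is exactly (Cia$_\wedge$), and similarly (Cia$_\vee$) and (Cia$_\to$).
\item \emph{Negation and consistency.} For $\neg$: if $u\in\neg x$, then $\Psi_{\mathsf A}(u)=([u],[u'],[u''])$ with $[u]=[y]$. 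In Ci we additionally need $[u']\preceq[x]$, which is (cf) since $u'\in\neg\neg x$. In Cie/Cia we need $[u']\equiv[x]$, which is (Cie). For $\circ$ in mbCci and above, we need $[u']\equiv[x]\curlywedge[y]$ for $u\in\circ x$, $u'\in\neg u$; this is precisely (ci).
\item \emph{Injectivity.} This is E4, as before.
\item \emph{Surjectivity.} Given $([x],[y],[z])$ in the swap structure, we pull the conditions back to $\mathsf A$, obtaining $x\curlyvee y\equiv\top$ and $z\equiv-(x\curlywedge y)$. Then E3 (in its ciw form) yields $w$ with $\Psi_{\mathsf A}(w)=([x],[y],[z])$.
\end{itemize}

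\textbf{Step 3: the main obstacle.} Bijectivity alone is not enough. An isomorphism of hyperalgebras needs $\Psi_{\mathsf A}^{-1}$ to be a morphism too, i.e., the reflection of $\neg$, $\circ$ and the lattice hyperoperations. For the lattice part, $u\in\Psi(x)\curlywedge\Psi(x')$ gives $u_1\in[x]\curlywedge[x']$, so some $d\in x\curlywedge x'$ has $d\sim u_1$; by stability (Proposition~\ref{stable-wedge-02}) and (I3)-type closure of Morgado infimoids under $\equiv$, $\Psi^{-1}(u)\in x\curlywedge x'$. For $\neg$, write $\Psi^{-1}(u)=w$ with $[w]=[y]$, $y\in\neg x$. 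Then $w\sim y$, i.e., $w,y\in\neg t$ or $w,y\in\circ t$ for some $t$. Here we expect to need E0, E1 and transitivity E2 to conclude $w\in\neg x$: from $y\in\neg x$ and $y\in\neg t$ we must get $x=t$ or at least $\neg x=\neg t$. I would derive this from E4, using $x\sim t$ (both in $\neg y$ by E0) and the extra swap conditions (cf/Cie/ci), which force $\neg x\sim\neg t$ and $\circ x\sim\circ t$ in the stronger logics. This is the delicate point, and I would check it separately for each $\mathcal L$.

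\textbf{Step 4: naturality.} Naturality is the same computation as in Theorem~\ref{equivalence-02-mbC}:
\[
\mathsf S^{\mathcal L}(\mathsf U^{\mathcal L}(g))(\Psi_{\mathsf A_1}(x))=([g(x)],[g(y)],[g(z)])=\Psi_{\mathsf A_2}(g(x)),
\]
which uses that $g$ preserves $\neg$ and $\circ$.
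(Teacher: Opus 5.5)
Your Steps 1, 2 and 4 are exactly the paper's argument for mbC (Theorem~\ref{equivalence-02-mbC}), which Section~7 tacitly reuses for $\mathcal L$. That argument shows $\Psi_{\mathsf A}$ is a well-defined morphism, injective by E4, surjective by E3, and natural. The paper stops there and calls the result an isomorphism. You are right that this gives only a natural bijective morphism. However, your Step~3, meant to upgrade it, does not work.

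\textbf{The inference in Step~3 fails.} The inference ``$x\sim t$ (both in $\neg y$ by E0)'' is unjustified: E0 only says $x\in\neg y'$ for \emph{some} $y'\in\neg x$, not for your particular $y$. Worse, even in a genuine swap structure, $y\in\neg x\cap\neg t$ only forces $x_2=t_2$, not $x_1=t_1$. So in general $x\not\sim t$, and E4 cannot be invoked. In the structure below, $a\in\neg a\cap\neg c$ but $a\not\sim c$.

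\textbf{The reflection you want does not follow from the axioms at all.} Write $\mathsf S^{\mathsf{mbCciw}}(\bm 2)=\{a,b,c\}$ with $a=(1,1,0)$, $b=(1,0,1)$, $c=(0,1,1)$. Then $\neg a=\neg c=\{a,b\}$, $\neg b=\{c\}$, $\circ a=\{c\}$ and $\circ b=\circ c=\{a,b\}$. Let $\mathsf A'$ be the same hyperalgebra except that $\neg a:=\{a\}$.
\begin{itemize}
\item Since $\neg c$ is unchanged, $\sim$ is still ``same first coordinate''.
\item (EM), (BC), (ciw) and E0--E4 all still hold. For E0: $a\in\neg a$, $b\in\neg c$, and $c\in\neg b\subseteq\neg\neg c$. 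For E3, use the witness $w=(x_1,y_1,z_1)$.
\item Then $\mathsf U(\mathsf A')\cong\bm 2$, and $\Psi_{\mathsf A'}$ is the identity on $\{a,b,c\}$, hence a bijective morphism.
\item But $\Psi_{\mathsf A'}(b)\in\neg\Psi_{\mathsf A'}(a)$, while $b\notin\neg a$ in $\mathsf A'$. So $\Psi_{\mathsf A'}^{-1}$ is not a morphism.
\item In fact $\mathsf A'\not\cong\mathsf S^{\mathsf{mbCciw}}(\bm 2)$, because the sizes of the sets $\neg x$ differ ($1,1,2$ versus $2,1,2$).
\end{itemize}

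So Step~3 cannot be completed from E0--E4. You have two options.
\begin{itemize}
\item Read ``isomorphism'' as ``bijective morphism''. That is all the paper's proof, and your Steps 1, 2 and 4, actually establish.
\item Add a saturation axiom, such as ``$y\in\neg x$ and $w\sim y$ imply $w\in\neg x$'', and likewise for $\circ$ (with the extra swap conditions built in for Ci, Cie and Cia). This axiom holds in every $\mathsf S^{\mathsf{mbC}}(\mathsf B)$ and $\mathsf S^{\mathsf{mbCciw}}(\mathsf B)$, and it is exactly what reflection needs.
\end{itemize}

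A smaller point: for Cia you appeal to (Cie). But a HCiaA is only required to be a HCiA, so only (cf), $\neg\neg x\preceq x$, is available there. This mismatch is already present in the paper's definition of $\mathsf S^{\mathsf{Cia}}$, whose $\neg$ is the Cie one.
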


Combining Theorems~\ref{equivalence-01-L} and~\ref{equivalence-02-L} we arrive at the main result in this section:

\begin{theorem}\label{equivalence-03-L}
    Let $\mathcal L\in\{\mathsf{mbCciw}, \mathsf{mbCci},\mathsf{Ci}, \mathsf{Cie},\mathsf{Cia}\}$. The functors $\mathsf S^{\mathcal L}:\textbf{HBA}\rightarrow \textbf{EH$\mathcal L$A}$ and $\mathsf U^{\mathcal L}:\textbf{EH$\mathcal L$A}\rightarrow \textbf{HBA}$ establish an equivalence of categories.
\end{theorem}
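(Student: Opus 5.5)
The plan is to obtain Theorem~\ref{equivalence-03-L} directly from Theorems~\ref{equivalence-01-L} and~\ref{equivalence-02-L}, in the same way Theorem~\ref{equivalence-03-mbC} follows from Theorems~\ref{equivalence-01-mbC} and~\ref{equivalence-02-mbC}. By definition, an equivalence of categories between \textbf{HBA} and \textbf{EH$\mathcal L$A} is a pair of functors $\mathsf S^{\mathcal L}$, $\mathsf U^{\mathcal L}$ together with natural isomorphisms $1_{\textbf{HBA}}\cong \mathsf U^{\mathcal L}\circ\mathsf S^{\mathcal L}$ and $1_{\textbf{EH$\mathcal L$A}}\cong \mathsf S^{\mathcal L}\circ \mathsf U^{\mathcal L}$. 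The proof therefore consists in checking that each ingredient is in place.

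First, I will check that the two functors are well typed.

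\begin{itemize}
\item $\mathsf S^{\mathcal L}$ is a functor $\textbf{HBA}\to\textbf{H$\mathcal L$A}$ by the Kalman functor theorem of this section. By the proposition stating that $\mathsf S^{\mathcal L}(\mathsf A)$ is an EH$\mathcal L$A, it corestricts to $\textbf{HBA}\to\textbf{EH$\mathcal L$A}$. This works because morphisms in \textbf{EH$\mathcal L$A} are exactly the \textbf{H$\mathcal L$A}-morphisms, so the full subcategory inclusion loses nothing.
\item $\mathsf U^{\mathcal L}:\textbf{EH$\mathcal L$A}\to\textbf{HBA}$ is a functor by Proposition~\ref{functor U-L}.
\end{itemize}

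Second, I will invoke the two natural isomorphisms.

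\begin{itemize}
\item Theorem~\ref{equivalence-01-L} gives $\Phi:1_{\textbf{HBA}}\Rightarrow\mathsf U^{\mathcal L}\circ\mathsf S^{\mathcal L}$, with each component $\Phi_{\mathsf A}$ an isomorphism.
\item Theorem~\ref{equivalence-02-L} gives $\Psi:1_{\textbf{EH$\mathcal L$A}}\Rightarrow \mathsf S^{\mathcal L}\circ\mathsf U^{\mathcal L}$, again componentwise iso.
\end{itemize}

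A natural transformation whose components are isomorphisms is a natural isomorphism, because the inverses are automatically natural. This yields the equivalence.

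The main obstacle is making sure that "isomorphism" in those theorems means an isomorphism in the relevant category, not merely a bijective morphism. In hyperalgebras a bijective morphism need not have an inverse that is a morphism. So I would check that $\Phi_{\mathsf A}^{-1}$ and $\Psi_{\mathsf A}^{-1}$ preserve the hyperoperations.

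\begin{itemize}
\item For $\Phi_{\mathsf A}^{-1}$: $[u]\in[z]\curlywedge[w]$ in $\mathsf U^{\mathcal L}(\mathsf S^{\mathcal L}(\mathsf A))$ means $u_1\in z_1\curlywedge w_1$ up to $\sim$. Since $\sim$ on hyper swap structures identifies exactly the tuples with equal first coordinate, this gives back $u_1\in z_1\curlywedge w_1$ in $\mathsf A$.
\item For $\Psi_{\mathsf A}^{-1}$: membership in the swap operations is determined by first coordinates (plus the extra $\circ$, $\neg$ conditions for mbCci, Ci, Cie, Cia). I would reduce this via E3 and E4 to the corresponding relations in $\mathsf A$. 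The extra conditions are handled by (ci), (cf), (Cie) and (Cia$_\#$) together with the stability axiom E1.
\end{itemize}

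If this reflection is already implicit in Theorems~\ref{equivalence-01-L} and~\ref{equivalence-02-L}, the proof reduces to the formal combination above.
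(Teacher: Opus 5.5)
Your formal argument is exactly the paper's: there the theorem is obtained simply by combining Theorems~\ref{equivalence-01-L} and~\ref{equivalence-02-L}. However, the obstacle you flag is a genuine gap, and your proposed way of closing it for $\Psi_{\mathsf A}^{-1}$ fails. The reflection is \emph{not} implicit in Theorem~\ref{equivalence-02-L}; its mbC counterpart (Theorem~\ref{equivalence-02-mbC}) only shows that $\Psi_{\mathsf A}$ is a bijective morphism.

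From $\Psi_{\mathsf A}(d)\in\neg\Psi_{\mathsf A}(x)$ you only get $[d]=[y]$, i.e.\ $d\sim y$, for some $y\in\neg x$. To conclude $d\in\neg x$ you need $\neg x$ to be closed under $\sim$. Nothing in E0--E4, (ci), (cf), (Cie) or (Cia$_\#$) gives this: E1 only makes $\neg x$ stable, and E3, E4 concern existence and uniqueness of elements, not the extent of $\neg x$. (For $\curlywedge,\curlyvee$ your reduction does work in the mbCciw case, via E1 and the fact that Morgado infimoids and supremoids are closed under $\equiv$.)

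Here is a concrete counterexample for $\mathcal L=\mathsf{mbCciw}$. Take $\mathsf S^{\mathsf{mbCciw}}(\mathbf{2})$, with domain $\{(1,0,1),(0,1,1),(1,1,0)\}$; here the defining condition is read as $z_3\equiv-(z_1\curlywedge z_2)$, and E3 accordingly. Replace $\neg$ by $\neg' z:=\{u : u_1=z_2,\ u_2=z_1\}$ and keep everything else. One checks directly that (EM), (BC), (ciw) and E0--E4 still hold; through $\circ$, the relation $\sim$ is still ``same first coordinate''. So this is an EHmbCciwA. Yet $\neg'(0,1,1)=\{(1,0,1)\}$ is a proper subset of its $\sim$-class $\{(1,0,1),(1,1,0)\}$, whereas in every $\mathsf S^{\mathsf{mbCciw}}(\mathsf B)$ each $\neg z$ is a full $\sim$-class. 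An isomorphism in the category preserves and reflects $\neg$ and $\circ$, hence $\sim$, so this algebra is isomorphic to no hyper swap structure. Consequently $\Psi$ is bijective but $\Psi^{-1}$ is not a morphism, and $\mathsf S^{\mathcal L}$ is not essentially surjective.

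So the statement cannot be proved as formulated. The definition of EH$\mathcal L$A needs an extra saturation axiom: if $y\in\neg x$ (resp.\ $y\in\circ x$) and $y\sim y'$, then $y'\in\neg x$ (resp.\ $y'\in\circ x$), together with an analogous condition for $-$. With that axiom your reduction goes through.

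Your first bullet also inherits a problem from the cited proposition. For $\mathcal L\in\{\mathsf{mbCci},\mathsf{Ci},\mathsf{Cie},\mathsf{Cia}\}$ the hyper swap structures violate E0. Since $\circ z$ demands $u_2\equiv z_1\curlywedge z_2$, the condition $x\in\circ\circ x$ forces $x_2\equiv x_3\curlywedge x_1\curlywedge x_2\equiv\bot$. For instance, in $\mathsf S^{\mathsf{mbCci}}(\mathbf{2})$ one has $\circ\circ(1,1,0)=\circ(0,1,1)=\{(1,0,1)\}$, which does not contain $(1,1,0)$. Hence in those cases $\mathsf S^{\mathcal L}$ does not even land in \textbf{EH$\mathcal L$A}.
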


The latter result shows that any  enriched hyper $\mathcal L$ algebra has a representation as a swap structure over a hyper Boolean algebra.

\section{Final Remarks} \label{sec8}

In this paper, we have established a hyperalgebraic counterpart to the theory of twist structures for the Logics of Formal Inconsistency (LFIs). By introducing the concept of \textbf{Hyper Boolean Algebras (HBAs)}, we provided a natural generalization of Boolean algebras that accommodates the non-deterministic nature of paraconsistent negations found in mbC and some of its extensions.

By identifying the category of \emph{Enriched Hyper mbC Algebras} (EHmbCAs) and defining the functor $S^{\mathsf{mbC}}$ over HBAs, we establish that the swap structure construction is not merely a heuristic tool for generating Nmatrices, but a functorial operation that induces a full equivalence of categories. This result parallels Cignoli's classical representation of Nelson algebras via twist structures over Heyting algebras, effectively lifting this methodology to the realm of hyperalgebras. Furthermore, the modular adaptation of this framework to stronger LFIs---such as mbCciw, mbCci, and Ci---confirms that Hyper Boolean Algebras provide a robust semantics for this hierarchy of paraconsistent logics.

Several promising directions for future research naturally arise from this framework. First, we intend to adapt this methodology to other classes of non-classical logics, such as Logics of Evidence and Truth (LETs) and Ivlev-like modal logics. A significant challenge in this direction is the absence of a natural hyperlattice structure for the hyper-swap representations of these logics---a gap that will inevitably require a generalization of the very notion of hyperlattice.

Second, we aim to investigate the connections between our hyperalgebraic approach and standard algebraization methods. Specifically, a work in progress consists of replicating the foundational ideas of Blok-Pigozzi algebraizability~\cite{blok1989algebraizable}, or even the broader notion of order-algebraizability developed by Raftery~\cite{raftery2013order}, within the context of hyper swap structures.

Third, the categories of hyperlattices themselves warrant a deeper algebraic and categorical analysis, independent of their logical applications. Finally, a challenging open problem remains: finding an intrinsic axiomatic description of the categories of enriched hyperalgebras. As noted in our analysis, the class of EH$\mathcal L$A (and even its analogue for $C_\omega$) is currently defined constructively as the image of the swap functor; a purely axiomatic characterization would complete the (hyper)algebraic picture of these structures.


\

\noindent {\bf Acknowledgments:}
Coniglio acknowledges support by an individual research grant from the National Council for Scientific and Technological Development (CNPq, Brazil), grant 309830/2023-0. All the authors were supported by the S\~ao Paulo Research Foundation (FAPESP, Brazil), thematic project {\em Rationality, logic and probability -- RatioLog}, grant  2020/16353-3. Roberto was supported by a post-doctoral grant from FAPESP, grant 2024/18577-7.

\bibliographystyle{plain}

\end{document}